\documentclass[10pt]{article}
\usepackage{amsfonts}
\usepackage{amsmath}
\usepackage{amssymb}
\usepackage{amsthm}
\usepackage{bm}
\usepackage{enumitem}
\usepackage{dsfont}
\usepackage{graphicx}
\textwidth 15 cm
\textheight 22.6 cm
\topmargin -1 cm
\oddsidemargin 0.5 cm
\evensidemargin 0.5 cm

\usepackage[usenames]{color}
\definecolor{red}{rgb}{1.0,0.0,0.0}

\definecolor{blu}{rgb}{0.0,0.0,1.0}

\definecolor{gre}{rgb}{0.03,0.50,0.03}

\usepackage{changes}
\definechangesauthor[name=giovanni, color=purple]{GZ}
\definechangesauthor[name=margherita, color=blue]{MZ}

\newtheorem{lemma}{Lemma}[section]
\newtheorem{theorem}[lemma]{Theorem}
\newtheorem{proposition}[lemma]{Proposition}
\newtheorem{corollary}[lemma]{Corollary}
\newtheorem{definition}[lemma]{Definition}
\newtheorem{problem}[lemma]{Problem}
\newtheorem{remark}[lemma]{Remark}
\newtheorem{hypothesis}[lemma]{Assumption}

\newcommand{\xphi}[2]{\left(\begin{smallmatrix}#1 \\ #2\end{smallmatrix}\right)}

\newcommand{\overbar}[1]{\mkern 1mu\overline{\mkern-1mu#1\mkern-1mu}\mkern 1mu}

\newcommand{\cdob}{\boldsymbol{\cdot}}

 % empty square
\def\sqr#1#2{{\vcenter{\vbox{\hrule height .#2pt \hbox{\vrule
 width .#2pt height#1pt \kern#1pt \vrule
width .#2pt} \hrule height .#2pt}}}}

\def\qedo{\hbox{\hskip 6pt\vrule width6pt height7pt
depth1pt  \hskip1pt}\bigskip}

\newcommand{\ind}{\mathds{1}}

\def\eps{\varepsilon}

\def\ds{\begin{displaystyle}}
\def\eds{\end{displaystyle}}

\def\<{\left\langle }
\def\>{\right\rangle }

\def\H{\mathbb H}
\def\R{\mathbb R}

\def\C{\mathbb C}
\def\E{\mathbb E}
\def\P{\mathbb P}

\def\F{\mathbb F}

\def\calb{{\cal B}}

\def\cald{{\cal D}}

\def\calf{{\cal F}}

\def\calh{{\cal H}}
\def\cali{{\cal I}}

\def\calm{{\cal M}}
\def\caln{{\cal N}}

\def\calx{{\cal X}}

\def\cals{{\cal S}}

\def\Id{{\operatorname{Id}}}

\def\1{\mathbf 1}
\def\to{\rightarrow}

\newcommand{\my}{\mu_y}

\allowdisplaybreaks

\begin{document}

\title{\bf Optimal portfolio choice with path dependent benchmarked labor income:
a mean field model}

\author{Boualem Djehiche\footnote{Department of Mathematics, KTH Royal Institute of Technology, 100 44, Stockholm, Sweden. E-mail: boualem@kth.se}, Fausto Gozzi\footnote{Dipartimento di Economica e Finanza, Luiss University, Viale Romania 32, 00197 Roma, Italy. E-mail: fgozzi@luiss.it}, Giovanni Zanco\footnote{Dipartimento di Economica e Finanza, Luiss University, Viale Romania 32, 00197 Roma, Italy. E-mail: gzanco@luiss.it} and Margherita Zanella\footnote{Dipartimento di Matematica ``Francesco Brioschi'', Politecnico di Milano, Via Bonardi 13, 20133 Milano, Italy. E-mail: margherita.zanella@polimi.it}}

\date{}

\maketitle

%\vspace{-0.5truecm}

\begin{abstract}
We consider the life-cycle optimal portfolio choice problem faced by an agent receiving labor income and allocating her wealth to risky assets and a riskless bond subject to a borrowing constraint.
In this paper, to reflect a realistic economic setting, we propose a model where the dynamics of the labor income has two main features. First, labor income adjust slowly to financial market shocks, a feature already considered in Biffis et al. (2015) \cite{BGP}. Second, the labor income $y_i$ of an agent $i$ is benchmarked against  the labor incomes of a population $y^n:=(y_1,y_2,\ldots,y_n)$ of $n$ agents with comparable tasks and/or ranks. This last feature has not been considered yet in the literature and is faced taking the limit when $n\to +\infty$
so that the problem falls into the family of optimal control of infinite dimensional McKean-Vlasov Dynamics, which is a completely new and challenging research field.

We study the problem in a simplified case where, adding a suitable new variable, we are able to find explicitly the solution of the associated HJB equation and find the optimal feedback controls.
The techniques are a careful and nontrivial extension of the ones introduced in the previous papers of Biffis et al., \cite{BGP,BGPZ}.
\end{abstract}

\bigskip

\textbf{Key words}:
Dynamic programming/optimal control;
Life-cycle optimal portfolio with labor income;
Wages with path dependent and law dependent dynamics;
Stochastic functional (delay) differential equations;
Optimal control of path dependent Mc Kean-Vlasov SDE with state constraints;
Second order Hamilton-Jacobi-Bellman equations in infinite dimension; Verification theorems and optimal feedback controls;

\bigskip

\textbf{AMS classification}:
34K50 (Stochastic functional-differential equations),
93E20 (Optimal stochastic control),
49L20 (Dynamic programming method),
35R15 (Partial differential equations on infinite-dimensional spaces),
91G10 (Portfolio theory),
91G80 (Financial applications of other theories (stochastic control, calculus of variations, PDE, SPDE, dynamical systems)),
35Q89 (PDEs in connection with mean field game theory),
49N80 (Mean field games and control).

%\vspace{-0.4truecm}

\newpage

\tableofcontents

%\vspace{-0.4truecm}

\section{Introduction}
We consider the life-cycle optimal portfolio choice problem faced by an agent receiving labor income and allocating her wealth to risky assets and a riskless bond subject to a borrowing constraint.  In line with the empirical findings and best practice, to reflect a realistic economic setting, the dynamics of labor incomes should include two main features. Firstly, labor incomes adjust slowly to financial market shocks, and income shocks have modest persistency when individuals can learn about their earning potential (see, e.g., \cite{KHAN_1997}, \cite{DICKENS_ET_AL_2007}, \cite{LEBIHAN_ET_AL_2012}). This suggests that delayed dynamics may represent a very tractable way of modelling wages that adjust slowly to financial market shocks (e.g., \cite{DYBVIG_LIU_JET_2010}, section~6). This aspect has been considered in the recent paper \cite{BGP} (see also \cite{BCGZ}, \cite{BGZ} in which the dynamics of labor income is modeled as path-dependent delayed diffusion process of the form (see Section \ref{Problem formulation} below for further details):
\begin{equation}\label{delay}
{\rm d}y(t) =\left[\my y(t)+\int_{-d}^0 \phi(s) y(t+s) {\rm d}s  \right] {\rm d}t + y(t)\sigma_y {\rm d}Z(t),
\end{equation}
where  $Z$ is a Brownian motion. The resulting optimal control problem which entails maximization of the expected power utility from lifetime consumption and bequest, subject to a linear state equation containing delay, as well as a state constraint (which is well known to make the problem considerably harder to solve), is infinite-dimensional, and can be seen as an infinite-dimensional generalization of Merton's  optimal portfolio problem.  In \cite{BGP} the authors were able to solve it completely obtaining the optimal controls in feedback form (Theorem~4.12), which
can be considered as the infinite dimensional generalization of the explicit solution to Merton's optimal portfolio problem which furthermore allows to fully understand the economic implications of the setting.

Secondly, and this is the novelty of this paper, the labor income $y_i$ of an agent $i$ is benchmarked against  the labor incomes of a population $y^n:=(y_1,y_2,\ldots,y_n)$ of $n$ agents with comparable tasks or ranks among the profession such as the level of full professor, associate professor, actuary, trader, risk manager etc., where one usually uses some wage level  $b(y^n)$ as a reference to declare whether that agent has a superior, fair or inferior labor income compared with her peers. {Typically, the labor income $y_i$ `mean-reverts' to the benchmark} $b(y^n)$ with some mean reversion speed $\epsilon$ (see e.g. \cite[\S 6]{DYBVIG_LIU_JET_2010} or \cite{BENZONI_ET_AL_2007} for the introduction of mean reverting terms in modeling labor income dynamics).  Moreover, such a benchmark $b(y^n)$ should reflect some `consensus' labor income of an indistinguishable agent within the peer group. Many corporations use the average $\bar{y}_n(t):=\frac{1}{n-1}\sum_{j=1, j\neq i}^{n} y_j(t)$, the median wage or the truncated average above a certain level $\ell$ within the company or even within the profession, $y^{\ell}_n(t):=\frac{1}{n-1}\sum_{j=1, j\neq i}^{n} y_j(t)I_{\{y_j(t)\ge \ell\}}$, used as incentive to keep attractive agents within the company,  as benchmark. These measures reflect some aggregation mechanism of some or all of the agents' labor incomes.

Let the benchmark $b(y^n)$ be the average income of a population of $n$ individuals at time $t$, $b(y^n):=\bar{y}_{n}(t):=\frac{1}{n-1}\sum_{j=1,j\neq i}^{n} y_j(t)$.  The dynamics of the $i$-th agent's labor income which includes the above mentioned aspects, i.e. path-dependency and benchmarking, can be modeled as
\begin{equation}\label{n-agents}
{\rm d}y_i(t) =\left[\epsilon(y_i(t)-\bar{y}_{n}(t))+\int_{-d}^0 \phi(s) y_i(t+s) {\rm d}s  \right] {\rm d}t + y_i(t)\sigma_y{\rm d}Z_i(t)
\end{equation}
where the $Z_i$'s are independent Brownian motions. Thus, $y^n=(y_1,y_2,\ldots,y_n)$ solves a system of interacting diffusions which are statistically indistinguishable i.e. have exchangeable joint laws. By the propagation of chaos property (see e.g. \cite{JMW}, Theorem 1.3), in the limit $n\to \infty$, the dynamics of the labor income of the representative agent is of McKean-Vlasov or mean-field type and reads
\begin{equation}\label{intro-mean}
dy(t) =\left[\epsilon(y(t)-\E[y(t)])+\int_{-d}^0 \phi(s) y(t+s) {\rm d}s  \right] {\rm d}t + y(t)\sigma_y {\rm d}Z(t).
\end{equation}
Thus, to extend the infinite dimensional  generalization of Merton's optimal portfolio problem of \cite{BGP} to benchmarked labor income dynamics, the mean field  delayed SDE  \eqref{intro-mean} can be used as the dynamics of the labor income of the representative agent instead of the system of $n$ interacting diffusions for arbitrarily large $n$ agents.

In the general case, to reflect consensus and aggregation, the benchmark wage $b(y^n)$ should be chosen such that
$y_1,y_2,\ldots,y_n$ which solve a system of interacting diffusions of the form
\begin{equation}\label{b-n-agents}
{\rm d}y_i(t) =\left[\epsilon(y_i(t)-b(\nu^n(t)))+\int_{-d}^0 \phi(s) y_i(t+s) {\rm d}s  \right] {\rm d}t + y_i(t)\sigma_y{\rm d}Z_i(t)
\end{equation}
are statistically indistinguishable ($\nu^n(t)$ denoting here the empirical measure of $y^n(t)$), in which case, in the limit $n\to \infty$, the dynamics of the representative agent's labor income satisfies the mean field type dynamics
\begin{equation}\label{intro-mf}
{\rm d}y(t) =\left[\epsilon(y(t)-b(\text{law}(y(t))))+\int_{-d}^0 \phi(s) y(t+s) {\rm d}s  \right] {\rm d}t + y(t)\sigma_y {\rm d}Z(t).
\end{equation}
It follows that the resulting optimal control problem adds a `mean-field aspect' to the infinite-dimensional generalization of Merton's optimal portfolio problem studied in \cite{BGP}.
{To be precise this problem falls into both families studied in this area:
%\vspace{-0.3truecm}
\begin{itemize}
\vspace{-0.2truecm}
\item  the `Mean-Field Games' where we look for a Nash equilibrium of a game with many players;
\item
\vspace{-0.2truecm}
the `Optimal Control of McKean-Vlasov Dynamics', where a unique representative agent (the `planner') takes the decisions.
\end{itemize}
\vspace{-0.2truecm}
In general the above two problems are different and give different results (see e.g. \cite[\S 6.1]{CarmonaDelarue18}) but in our case, since the labor income is not influenced by the choice of the agents, they turn out to be the same. This means that the results of this paper can be interpreted under different angles. Here our goal is mainly to develop the theoretical machinery to find the solution while we leave the
analysis of its financial consequences for a subsequent paper.}

In this paper we explicitly solve the path-dependent generalization of Merton's  optimal portfolio problem under the labor income dynamics \eqref{intro-mean}. We are able to do this using a suitable infinite-dimensional general  problem (coming from a change of variable introduced in Remark \ref{rm:changevar}) whose associated HJB equation admits an explicit solution $\tilde v$ which allows to find the optimal control strategies in an explicit feedback/ closed-loop form.

Under the general dynamics \eqref{intro-mf}, an explicit solution of the associated infinite dimensional HJB equation is however out of reach. Even establishing existence and uniqueness and deriving qualitative properties of the solution of the associated HJB seems a hard problem to solve for the time being.
The main issue here is that, even in the finite-dimensional non-path-dependent case,
the theory for HJB equations arising in the optimal control of McKean-Vlasov dynamics is at a very initial stage: only few results on viscosity solutions are available and no regularity theorem is proved up to now, except in very specific settings, like the linear quadratic one.
Concerning the finite-dimensional non-path-dependent, one can see e.g.  the book \cite{CarmonaDelarue18} for an account of the theory, and the papers \cite{BurzoniEtAl19,CossoPham19,PhamWei17} for some recent results.
Concerning instead the finite-dimensional path-dependent case one can see the paper \cite{WuZhang20} for some results on viscosity solutions of the HJB equations.
Finally, up to now, concerning mean-field games in infinite dimension, we only know the linear quadratic model of \cite{FouqueZhang18}.

\smallskip

The structure of the paper is as follows.

\vspace{-0.2truecm}

\begin{itemize}
  \item
In Section \ref{Problem formulation} we
outline the model and, in Remark \ref{rm:changevar}, introduce the change of variable which we use to rewrite it in a more treatable form.

\vspace{-0.2truecm}

  \item
Section \ref{sec:constraint} is devoted to the non-trivial task of rewriting the no-borrowing constraint (see \eqref{NO_BORROWING_WITHOUT_REPAYMENT_CONDITIONLA_MEAN} below) in our case.

\vspace{-0.2truecm}

  \item
In Section \ref{SEC:HJB} we first write our general  problem (Problem \ref{pbl}) in a suitable infinite-dimensional setting (Subsection \ref{sub:pbinfdim}). Then, in Subsection \ref{sub:HJB}, we write and solve the associated
HJB equation (Theorem \ref{thm_sol1}).

\vspace{-0.2truecm}

\item
In Section 5, we solve the general  problem.
First, in Subsection 5.1, we provide a lemma to understand what happens to admissible strategies when the boundary of the constraint set is reached, a key feature in dealing with state constraints problems.
Then, in Subsections 5.2-5.3, we prove the fundamental identity and the verification theorem, which allow us to find the optimal strategies in feedback form. Here, for brevity, we consider mainly the case $\gamma \in (0,1)$ simply recalling how to deal with the case $\gamma >1$.

\vspace{-0.2truecm}

\item
Finally, Section 6 summarizes the main results of the paper for the original problem, with a short discussion.
\end{itemize}

\vspace{-0.2truecm}

\section{Problem formulation}\label{Problem formulation}

We begin with the basic setting which is borrowed from \cite{DYBVIG_LIU_JET_2010} and \cite{BGP} and is repetead here for the reader's convenience.

Consider a filtered probability space $(\Omega, \mathcal F, \mathbb F, \mathbb P)$, where we define the $\mathbb F$-adapted vector valued process $(S_0,S)$ representing the price evolution of a riskless asset, $S_0$, and $n$ risky assets, $S=(S_1,\ldots,S_n)^\top$, with dynamics
\vspace{-0.2truecm}
\begin{eqnarray}\label{DYNAMIC_MARKET}
\left\{\begin{array}{ll}
dS_0(t)= S_0(t) r  dt\\
dS(t) =\text{diag}(S(t)) \left(\mu dt + \sigma dZ(t)\right)\\
S_0(0)=1\\
S(0)\in {\mathbb R}^n_{+},
\end{array}
\right.
\vspace{-0.2truecm}
\end{eqnarray}
where we assume the following.
\begin{hypothesis}\label{hp:S}
\begin{itemize}
  \item[]
  \item[(i)]
\vspace{-0.2truecm}
$Z$ is a $n$-dimensional Brownian motion.
The filtration $\mathbb F=(\mathcal F_t)_{t \ge 0}$ is the one generated by $Z$, augmented with the $\P$-null sets.
\vspace{-0.2truecm}
  \item[(ii)] $\mu \in \mathbb R^n$, and the matrix $\sigma \in  \mathbb R^{n \times n}$ is invertible.
\end{itemize}
\end{hypothesis}

%\red{be invertible and satisfy $\sigma\sigma^\top>0$}, with $\mathbb R$ ($\mathbb R_+$) denoting the (non-negative) real numbers.

An agent is endowed with initial wealth $w\ge 0$, and receives
labor income $y$ until the stopping time $\tau_{\delta}>0$, which represents the agent's random time of death.
%\footnote{We do not introduce a retirement date to keep the model simple. See comments in section~\ref{SE:DISCUSSION} for more details.}
We assume the following.
\begin{hypothesis}\label{hp:tau}
\begin{itemize}
  \item[]
  \item[(i)]
\vspace{-0.2truecm}
$\tau_{\delta}$ is independent of $Z$, and it has exponential law with parameter $\delta >0$.
\vspace{-0.2truecm}
  \item[(ii)] The reference filtration is accordingly
given by the enlarged filtration {\color{black} $\mathbb G := \big( \mathcal G_t \big)_{t \ge 0}$,}
where each sigma-field {\color{black} $\mathcal G_t$}
is defined as
\vspace{-0.2truecm}
\begin{equation*}
%label{}
	{\color{black} \mathcal G_t}:= \cap_{u>t} \left(\mathcal F_u \vee  \sigma_g\left(\tau_{\delta}\wedge u\right)\right),
\vspace{-0.2truecm}
\end{equation*}
%\end{eqnarray*}
augmented with the $\mathbb P$-null sets. Here by $\sigma_g (U)$ we denote the sigma-field generated by the random variable $U$.
\end{itemize}
\end{hypothesis}
Note that, with the above choice, $\mathbb G$ is the minimal enlargement of the Brownian filtration satisfying the usual assumptions and making $\tau_\delta$ a stopping time {\color{black}
(see \cite[Section VI.3, p.370]{Protter} or \cite[Section 7.3.3, p.420]{JYC}).
Moreover, see \cite[Proposition 2.11-(b)]{AKSAMITJEANBLANC17}, we have the following result.
If a process $A$ is ${\mathbb G}$-predictable then there exists a process $a$
which is ${\mathbb F}$-predictable and such that
\vspace{-0.2truecm}
\begin{equation}\label{eq:GFpred}
A(s,\omega) =  a(s,\omega),\qquad
\forall \omega\in \Omega,\; \forall s \in [0,\tau_\delta(\omega)].
\vspace{-0.2truecm}
\end{equation}
%(s,\omega)
%+ {\bf 1}_{[\tau(\omega),+\infty ]}(s,\omega)\hat a(\tau)(s,\omega),
%\qquad s \ge 0, \quad \omega \in \Omega,
%where $a$ is ${\mathbb F}$-predictable and $\hat a$
%Moreover, we have that for every $\mathcal G_t$-measurable r.v.
%	$\overline U$ there exists an $\mathcal F_t$-measurable r.v. $U$ coinciding with
%	$\overline U$ on $\{\tau_{\delta}>t\}$. This follows from a monotone class argument
%    after noting that every event $\overline G\in \mathcal G_t$ satisfies
%    $\overline G \cap \{\tau_{\delta}>t\} = G\cap \{\tau_{\delta} >t\}$ for some event $G\in \mathcal F_t$
%     \cite[section~7.3.3]{JYC}.}
%	, following \cite[Section~VI.3]{Protter}, using , {\color{cyan} we note that
%\cite{JYC}
%We therefore have that every $\mathbb G$-predictable process $\overline Y$ coincides with an $\mathbb F$-predictable process $Y$ on $\{\tau_{\delta} > t \}$, and by  Hypothesis \ref{hp:tau}(i) this process is unique \cite[Prop.~5.9.4.1]{JYC}.
%moreover, every $\mathbb G$-stopping time $\overline \tau$ coincides with an $\mathbb F$-stopping time $\tau$ on $\{\tau_{\delta} > t \}$.

Therefore, arguing as in \cite[Section 2]{BGP}, we can reduce the problem (which is initially relative to the larger filtration $\mathbb G$) to the ``pre-death'' one (where we work with $\mathbb F$-predictable processes). Hence, from now on we express the problem in terms of $\mathbb F$-predictable processes.
%and then solve it by first working with pre-death processes (i.e. $\mathbb F$-predictable processes associated with $\mathbb G$-predictable processes as in \eqref{eq:GFpred}) and then finally expressing our results in terms of the original filtration $\mathbb G$ in Section~6.

The agent can invest her resources in the riskless and risky assets, and can consume at rate $c(t)\geq 0$. We denote by $\theta(t)\in \mathbb R^n$ the amounts allocated to the risky assets at each time $t\geq 0$. The agent can also purchase life insurance to reach a bequest target $B(\tau_\delta)$ at death, where $B(\cdot)\geq 0$ is also chosen by the agent. We let the agent pay an insurance premium of amount $\delta(B(t)-W(t))$ to purchase coverage of face value $B(t)-W(t)$, for $t<\tau_\delta$. As in \cite{DYBVIG_LIU_JET_2010}, we interpret a negative face value $B(t)-W(t)<0$ as a  life annuity trading wealth at death for a positive income flow $\delta(W(t)-B(t))$ while living. The controls $c, \theta$, and $B$  are for the moment assumed to belong to the following set:
\vspace{-0.2truecm}
\begin{multline}\label{DEF_PI0_FIRST_DEFINITION}
\Pi^0:= \Big\{\mathbb F-\mbox{predictable} \ c(\cdot), B(\cdot), \theta(\cdot) \colon c(\cdot), B(\cdot) \in L^1 (\Omega \times [0, +\infty);\mathbb R_{+}),\theta(\cdot) \in L^2(\Omega \times \mathbb R; \mathbb R^n)\Big\}.
\vspace{-0.2truecm}
\end{multline}

%Let $N_t:=1_{\tau_\delta\leq t}$ denote the death indicator process.
%After allowing for wage income, consumption, and investment we assume that
The agent's wealth (before death) is assumed to obey to the standard dynamic budget constraint of the Merton portfolio model, but with the labor income %($y(t)$)
	and insurance premium %($\delta (W(t)-B(t))$)
	terms  added in the drift, exactly as in \cite{DYBVIG_LIU_JET_2010} and \cite{BGP}.
On the other hand the evolution of the labor income $y$ here is new. The main novelty here is that,
as opposed to standard bilinear SDEs (as in, e.g., \cite{DYBVIG_LIU_JET_2010}) and to bilinear path-dependent SDEs (as in \cite{BGP}),
we assume the labor income $y$ to follow a bilinear SDE where the drift contains not only a path-dependent term but also a mean reverting term.
Hence, the dynamics of the state variables $(W,y)$ are as follows:
\begin{align}\label{DYNAMICS_WEALTH_LABOR_INCOME}
\begin{split}
\left\{\begin{array}{ll}
dW(t) = & \left[W(t) r + \theta(t)\cdob (\mu-r\mathbf{1})  + y(t) - c(t)
-\delta\left(B(t)-W(t)\right)\right] {\rm d}t + \theta(t)\cdob\sigma {\rm d}Z(t) \\[2mm]
dy(t) = & \left[\epsilon(y(t)-\E[y(t)])+\my y(t)+\int_{-d}^0 \phi(s) y(t+s) {\rm d}s  \right] {\rm d}t + y(t)\sigma_y\cdob {\rm d}Z(t),\\[2mm]
W(0) = & w,\\
y(0)= & x_0, \quad y(s) = x_1(s) \mbox{ for $s \in  [-d,0)$},
\end{array}\right. \end{split}
\end{align}
where $(c,B,\theta)\in \Pi^0\left(w,x_0,x_1\right)$, $\mu_y,\epsilon \in  \mathbb R$, $\sigma_y \in  \mathbb R^n$, $\mathbf 1 = (1,\dots, 1)^\top$ is the unitary vector in $\mathbb R^n$, $\cdob$ denotes the canonical inner product of $\R^n$ and the functions $\phi(\cdot), x_1(\cdot)$ belong to $L^2\left(-d,0; \mathbb R\right)$.
%are assumed to be differentiable.
%$$R(t):=\mathbb I_{\{ \tau_R \leq t \}}$$
%and $\tau_R$ represents the retirement time,

\begin{remark} From an economic point of view, as in \cite{BGPZ}, the term $\mu_y y(t)$ in the dynamics of $y$ in \eqref{DYNAMICS_WEALTH_LABOR_INCOME} models a discounting effect at rate $\mu_y$ to account for a possible inflationary ($\mu_y <0$)/deflationary ($\mu_y >0$) regime.
Moreover, in the mean reverting term, it is standard to choose $\epsilon<0$. Here we take generic $\epsilon\in \R$ since our method of solution works also in this case.
\end{remark}

Once the control strategies $(c,B,\theta)\in \Pi^0\left(w,x_0,x_1\right)$ are fixed and the process $y\in L^1 (\Omega \times [0, +\infty);\mathbb R_{+})$ is given, existence and uniqueness of a strong solution to the SDE for $W$ are ensured, e.g.,
by the results of \cite[Chapter 5.6]{KARATZAS_SHREVE_91}.

On the other hand existence and uniqueness of a solution for the equation for $y$ is more delicate. When $\epsilon =0$, \cite[Theorem I.1 and Remark I.3(iv)]{MOHAMMED_BOOK_96} ensure existence and uniqueness of a solution with $\P$-a.s. continuous paths.
The case when $\epsilon\ne 0$ can be treated as in \cite{JMW} when there is no path-dependency, while the present case can be treated similarly to \cite[Subsection 5.1]{WuZhang20}}.

\begin{remark}\label{rm:changevar}
The equation for $y$ can be rewritten by introducing the new variable
\vspace{-0.2truecm}
$$
e(t)=\E[y(t)].
\vspace{-0.2truecm}
$$
Taking expectation in the equation for $y$ above we get that $e$ satisfies the delay equation
\begin{equation}\label{eq:efirst}
de(t)= \left[\my e(t)+\int_{-d}^0 \phi(s) e(t+s) {\rm d}s\right] {\rm d}t,
\end{equation}
while the equation for $y$ becomes
\begin{equation}\label{eq:yefirst}
dy(t) = \left[\epsilon(y(t)-e(t))+\my y(t)+\int_{-d}^0 \phi(s) y(t+s) {\rm d}s  \right] {\rm d}t + y(t)\sigma_y\cdob {\rm d}Z(t).\\[2mm]
\end{equation}
Now, thanks to \cite[Theorem I.1 and Remark I.3(iv)]{MOHAMMED_BOOK_96}
the system made of (\ref{eq:efirst})-(\ref{eq:yefirst}) admits a unique strong solution with $\P$-a.s. continuous paths for $t\geq 0 $ for every initial datum and it is not difficult
to prove that, when the initial data are chosen so that
$$
y(0)=  x_0, \quad y(s) = x_1(s) \mbox{ for $s \in  [-d,0)$},
$$
$$
e(0)= \E[x_0]=x_0, \quad e(s) =\E[ x_1(s)]=x_1(s) \mbox{ for $s \in  [-d,0)$},
$$
then the component $y$ of such solution is also a strong solution of the second equation of \eqref{DYNAMICS_WEALTH_LABOR_INCOME} and that, vice versa, given a strong solution to the labor income equation in \eqref{DYNAMICS_WEALTH_LABOR_INCOME}, the couple $(\E[y],y)$ solves the system \eqref{eq:efirst}--\eqref{eq:yefirst}.
%Of course the concept of solution must be clarified but we will do this in the infinite dimensional formulation of Section \ref{SEC:HJB}.
\\
Hence, system \eqref{DYNAMICS_WEALTH_LABOR_INCOME}
can be rewritten, in the variables $(W,y,e)$ as
\begin{align}\label{DYNAMICS_WEALTH_LABOR_INCOMEe}
\begin{split}
\left\{\begin{array}{ll}
dW(t) = & \left[W(t) r + \theta(t)\cdob (\mu-r\mathbf{1})  + y(t) - c(t)
-\delta\left(B(t)-W(t)\right)\right] {\rm d}t + \theta(t)\cdob \sigma {\rm d}Z(t) \\[2mm]
dy(t) = & \left[\epsilon(y(t)-e(t))+\my y(t)+\int_{-d}^0 \phi(s) y(t+s) {\rm d}s  \right] {\rm d}t + y(t)\sigma_y\cdob {\rm d}Z(t),\\[2mm]
de(t) = & \left[\my e(t)+\int_{-d}^0 \phi(s) e(t+s) {\rm d}s  \right] {\rm d}t ,\\[2mm]
W(0) = & w,\\
y(0)= & x_0, \quad y(s) = x_1(s) \mbox{ for $s \in  [-d,0)$},\\
e(0)=& \E[x_0]=x_0, \quad e(s) =\E[ x_1(s)]=x_1(s) \mbox{ for $s \in  [-d,0)$}.
\end{array}\right. \end{split}
\end{align}
We will refer to this system in the sequel.
\hfill\qedo
\end{remark}

%Note that here the initial conditions for $y$ and $e$ are connected (indeed they are equal);
%this means that, if we use dynamic programming, we are not interested on the value function for all possible initial data of $(W,y,e)$.

We aim to maximize the expected utility from lifetime consumption and bequest,
\begin{eqnarray}\label{DEF_OBJECTIVE FUNCTION_DEATH TIME}
\mathbb E \left(\int_{0}^{\tau_{\delta}} e^{-\rho t }
\frac{c(t)^{1-\gamma}}{1-\gamma} dt
+ e^{-\rho \tau_{\delta} } \frac{\big(k B(\tau_\delta)\big)^{1-\gamma}}{1-\gamma}
\right),
\end{eqnarray}
over all triplets $\left(c,\theta,B\right)\in \Pi^0$
satisfying a suitable no-borrowing state constraint introduced below in
\eqref{NO_BORROWING_WITHOUT_REPAYMENT_CONDITIONLA_MEAN}.
In the above, $k>0$ measures the intensity of preference for leaving a bequest, $\gamma \in (0,1) \cup (1, +\infty)$ is the risk-aversion coefficient and $\rho >0$ is the discount rate.
As the death time is independent of $Z$ and exponentially distributed, we can rewrite
the objective functional as follows  (e.g., \cite[Section 2]{BGP} or \cite[Section 3.6.2]{PHAM_BOOK_2009}):
\begin{equation}
\label{OBJECTIVE_FUNCTION}
J(c,B):=\mathbb E \left(\int_{0}^{+\infty} e^{-(\rho+ \delta) t }
\left( \frac{c(t)^{1-\gamma}}{1-\gamma}
+ \delta \frac{\big(k B(t)\big)^{1-\gamma}}{1-\gamma}\right) {\rm d}t
\right).
\end{equation}
The announced state constraint is the same as in \cite{BGP}, which is also considered in \cite{DYBVIG_LIU_JET_2010}. We present it here for the reader's convenience.
First of all recall that, given the financial market described by \eqref{DYNAMIC_MARKET}, the pre-death state-price density of the agent obeys the stochastic differential equation
\begin{equation}\label{DYN_STATE_PRICE_DENSITY}
\left\{\begin{array}{ll}
d \xi (t)& = - \xi(t)(r +\delta) {\rm d}t  -\xi(t) \kappa\cdob {\rm d}Z(t),\\
\xi(0)&=1.
\end{array}\right.
\end{equation}
where $\kappa$ is the market price of risk and is defined as follows (e.g., \cite{KARATZAS_SHREVE}):
\begin{equation}\label{DEF_KAPPA}
\kappa:= (\sigma)^{-1} (\mu- r \mathbf 1).
\end{equation}
We require the agent to satisfy the following constraint
\begin{equation}\label{NO_BORROWING_WITHOUT_REPAYMENT_CONDITIONLA_MEAN}
W(t) +   \xi^{-1}(t)\mathbb E\left( \int_t^{+\infty} \xi(u) y(u) {\rm d}u \Bigg\vert \mathcal F_t\right)  \geq 0,
\end{equation}
which is a no-borrowing-without-repayment constraint
as the second term in
\eqref{NO_BORROWING_WITHOUT_REPAYMENT_CONDITIONLA_MEAN}
represents the agent's market value of
human capital at time $t$. In other words, human capital can be pledged as collateral,
and represents the agent's maximum borrowing capacity.
We note that the agent cannot default on his/her debt upon death, as the bequest target $B$ is nonnegative.

Let us denote by $\left(W^{w,x_0,x_1}\left(t; c,B,\theta\right),y^{x_0,x_1}(t)\right)$
the solution at time $t$ of system \eqref{DYNAMICS_WEALTH_LABOR_INCOME},
where we emphasize the dependence of the solution on the initial conditions $(w,x_0,x_1)$
and strategies $(c,B,\theta)$. %$y\left(x_0)$
We can then define the set of admissible controls as follows:
\begin{equation}\label{DEF_PI_FIRST_DEFINITION}
\begin{split}
\Pi\left(w,x_0,x_1\right):= \Bigg\{ & c(\cdot), B(\cdot), \theta(\cdot)
\in \Pi^0\left(w,x_0,x_1\right), \ \mbox{such that:}\\
 & W^{w,x_0,x_1}\left(t; c,B,\theta\right) +  \xi^{-1}(t)  \mathbb E\left( \int_t^{+\infty} \xi(u) y^{x_0,x_1}(u) {\rm d}u \Big\vert  \mathcal F_t\right)  \geq 0\,\quad \forall t \geq 0\Bigg\}.
\end{split}
\end{equation}
Our problem is then to maximize the functional given in \eqref{OBJECTIVE_FUNCTION}
over all controls in $\Pi\left(w,x_0,x_1\right)$.

% \color{black}

% PROGRAM:
% \begin{itemize}
%   \item Reformulate the problem in the Hilbert space
%   $\calh:=\R\times \R \times L^2(-d,0)\times \R \times L^2(-d,0)$.
%   \item Reformulate the constraint in this case, adapting the argument of \cite{BGP}.
%   \item Check if HJB equation can be solved.
%   \item Check if PMP works here.
% \end{itemize}

% A first easy computation can be done in the simplified case when $n=1$, $\delta=0$ and $d=0$.
% Then look at the problem for $d>0$ possibly using the infinite dimensional reformulation for HJB.

We introduce two assumptions that will hold throughout the whole paper:
\begin{hypothesis}
  \label{Hyp_K}
\begin{equation*}
  \begin{cases}
    r+\delta-(\epsilon+\my-\sigma_y\cdob\kappa)-\int_{-d}^0e^{(r+\delta)s}\left\vert\phi(s)\right\vert{\rm d}s>0&\text{ if }\epsilon-\sigma_y\cdob\kappa <0,\\
    r+\delta-\my-\int_{-d}^0e^{(r+\delta)s}\left\vert\phi(s)\right\vert{\rm d}s>0&\text{ if }\epsilon-\sigma_y\cdob\kappa\geq 0.
  \end{cases}
\end{equation*}
\end{hypothesis}

\begin{remark} $ $
\begin{itemize}
\item [$(a)$] Assumption \ref{Hyp_K} is needed
to rewrite in a convenient way (as we do in Section \ref{sec:constraint}) the constraint \eqref{NO_BORROWING_WITHOUT_REPAYMENT_CONDITIONLA_MEAN},
and will be carefully explained in Subsection \ref{subsec:comment}.
Here we only observe that this condition is a refinement of the one provided in \cite[Hypothesis 2.4]{BGP} and that in the interesting case $\epsilon <0$, only the first formula holds, which reduces, when $\eps=0$, to Hypothesis 2.4 of \cite{BGP}.

\item [$(b)$]
Mimicking the method of the proof of Proposition 2.2 in \cite{BGP}, we obtain the following representation of the labor income process $y$:
\begin{equation*}
y(t)=E(t)(x_0+I(t))
\end{equation*}
where
$$
\begin{array}{lll}
E(t)=e^{(\epsilon+\my-\frac{1}{2}\vert\sigma_y\vert^2)t+\sigma_y\cdob Z(t)}, \\
I(t)=\int_0^t E^{-1}(u)\left(-\epsilon e(u)+\int_{-d}^0\phi(s)y(s+u)ds\right)du.
\end{array}
$$
Moreover, if $x_0>0, x_1\ge 0$ a.s., $\phi\ge 0$ a.e. and $\epsilon<0$, then it must be that $y(t)>0\,\, \mathbb{P}$-a.s..
\end{itemize}
\end{remark}

\begin{hypothesis}
  \label{Hyp_gamma}
  \begin{equation*}
    \rho+\delta-(1-\gamma)\left(r+\delta+\frac{\vert\kappa\vert^2}{2\gamma}\right)>0\ .
  \end{equation*}
\end{hypothesis}

\begin{remark}\label{rm:hp2}
Assumption \ref{Hyp_gamma} is required to ensure that the candidate solution of our HJB equation in Section \ref{SEC:HJB} is well defined and finite, see Theorem \ref{thm_sol1}.
In similar simple cases it can actually be proved that,
when $\gamma\in (0,1)$ and
$$
\rho + \delta -(1-\gamma) (r + \delta +\frac{\vert\kappa\vert^2}{2\gamma }) <0,
$$
the value function is infinite; for example, see \cite{FreniGozziSalvadori06} for the deterministic case.
\end{remark}

\section{Reformulation of the constraint}
\label{sec:constraint}
Within this section we assume that the second equation of (\ref{DYNAMICS_WEALTH_LABOR_INCOME}) has a unique continuous $\F$-adapted solution $y$.\\
We find an equivalent expression for the human capital defined as
\begin{equation}
\label{def_HC}
  HC(t_0):= \xi(t_0)^{-1} \mathbb{E}\left[ \int_{t_0}^{\infty}\xi(u)y(u)\,{\rm d}u |\mathcal{F}_{t_0}\right],
\end{equation}
which is the secon summand in the left hand side of the constraint (\ref{NO_BORROWING_WITHOUT_REPAYMENT_CONDITIONLA_MEAN}).\\

Following the idea of \cite{BGPZ}, we incorporate the discount factor $\xi$ in an equivalent probability measure $\tilde{\mathbb{P}}$ (Subsection \ref{subsec:equiv_P}) and rewrite the dynamics of $y$ under $\tilde{\mathbb{P}}$ in a suitable Hilbert space, using the so-called \emph{product-space framework} for path-dependent equations (Subsection \ref{subsec:Hilbert}). Exploiting some spectral properties of the operators that appear in this formulation we can finally obtain the mentioned equivalent expression for $HC(t_0)$ (Subsections \ref{subsec:spectral} and \ref{subsec:formula_HC}). We then comment on the relation between the spectral properties used herein and our Assumption \ref{Hyp_K} (Subsection \ref{subsec:comment}).

\subsection{Equivalent probability measure}
\label{subsec:equiv_P}
We start by considering the equivalent probability measure $\tilde \P_s$ on $\mathcal{F}_s$ such that
\begin{equation}
\label{Ptilde}
  \frac{\text{d}\tilde{\mathbb  P}_s}{\text{d} \mathbb P}=\exp\left(  -\frac{1}{2} |\kappa|^2  s   - \kappa\cdob Z(s)\right)     =  e^{(r+\delta)s}  \xi(s)\ ;
\end{equation}
by \cite[Lemma 3.5.3]{KARATZAS_SHREVE_91} we can write   \[   \mathbb E   \left[ \xi(s)  y(s) \mid \mathcal F_{t_0}  \right] = \xi(t_0) e^{-(r+\delta)(s-t_0)}  \tilde{\mathbb E}_s \left[  y(s) \mid \mathcal F_{t_0}  \right]  .\]

We are reduced to evaluate
  \begin{equation}
    \label{EXPRESSION_II}
    \begin{aligned}
    \mathbb E \left[ \int_{t_0}^{+\infty}   \xi(s)  y(s)  \text{d}s \mid \mathcal F_{t_0}    \right]&=\int_{t_0}^{+\infty}  \mathbb E \left[ \xi(s)  y(s)\mid \mathcal F_{t_0}   \right]\text{d}s\\
    &=\xi(t_0) e^{(r+\delta)t_0} \int_{t_0}^{+\infty}  e^{-(r+\delta)s}  \tilde{\mathbb E}_s \left[  y(s)\mid \mathcal F_{t_0}   \right] \text{d}s.
  \end{aligned}
\end{equation}

The idea is now to understand what kind of SDE the quantity $\tilde{\mathbb E}\left[  y(s)\mid \mathcal F_{t_0}   \right]=\tilde\E_s\left[  y(s)\mid \mathcal F_{t_0}   \right]$ satisfies.
Let $\tilde{\mathbb P}$ the measure such that $\left.\tilde{\mathbb P}\right|_{\mathcal F_s}=\tilde{\mathbb P}(s)$ for all $s\geq 0$.
By the Girsanov Theorem the process
$\tilde{Z}(t) = Z(t) + \kappa t$
is an $n$-dim. Brownian motion under $ \tilde{\mathbb  P}$.
The dynamics of $y$ under $ \tilde{\mathbb  P}$ is then
\begin{align}
\begin{split}
\left\{\begin{array}{ll}
dy(t) = & \left[ (\epsilon+\my-\sigma_y\cdob \kappa)y(t)-\epsilon e(t)+\int_{-d}^0 \phi(s) y(t+s) {\rm d}s  \right] {\rm d}t + y(t)\sigma_y\cdob    {\rm d}\tilde Z(t),\\[2mm]
de(t) = & \left[\my e(t)+\int_{-d}^0 \phi(s) e(t+s) {\rm d}s  \right] {\rm d}t ,\\[2mm]
y(0)= & x_0, \quad y(s) = x_1(s) \mbox{ for $s \in  [-d,0)$},\\
e(0)=& \E[x_0]=x_0, \quad e(s) =\E[ x_1(s)]=x_1(s) \mbox{ for $s \in  [-d,0)$}.
\end{array}\right. \end{split}
\end{align}
\begin{remark}
Under the equivalent probability measure $\tilde{\mathbb{P}}$
%the relation between $e$ and $y$ is given by
%\begin{equation*}
%e(s)=e^{-rs}\tilde{\mathbb{E}}\left[\frac{y(s)}{\xi(s)}\right],
%\end{equation*}
the DDE satisfied by $e$ remains the same.% (GIUSTO???)
\end{remark}

Therefore, the quantity $\tilde{\mathbb{E}}\left[y(t)|\mathcal{F}_{t_0}\right]$ satisfies the equation
\begin{align*}
\tilde{\mathbb{E}}\left[y(t)|\mathcal{F}_{t_0}\right]
&=y(t_0)+ (\epsilon+\my-\sigma_y\cdob \kappa) \int_{t_0}^t \tilde{\mathbb{E}}\left[y(s)|\mathcal{F}_{t_0}\right]\, {\rm d}s
- \epsilon\int_{t_0}^t \tilde{\mathbb{E}}\left[e(s)|\mathcal{F}_{t_0}\right]\, {\rm d}s
\\
&\qquad+ \int_{t_0}^t \int_{-d}^0 \tilde{\mathbb{E}}\left[y(s+\tau)|\mathcal{F}_{t_0}\right]\phi(\tau)\, {\rm d} \tau\, {\rm d}s
\\
&=y(t_0)+ (\epsilon+\my-\sigma_y\cdob \kappa) \int_{t_0}^t \tilde{\mathbb{E}}\left[y(s)|\mathcal{F}_{t_0}\right]\, {\rm d}s
-  \epsilon\int_{t_0}^t e(s)\, {\rm d}s
\\
&\qquad+ \int_{t_0}^t \int_{-d}^0 \tilde{\mathbb{E}}\left[y(s+\tau)|\mathcal{F}_{t_0}\right]\phi(\tau)\, {\rm d} \tau\, {\rm d}s
,
\end{align*}
%Set $E_0(s):=\tilde{\mathbb{E}}\left[e(s)|\mathcal{F}_{t_0}\right]=\tilde{\mathbb{E}}\left[\mathbb{E}[y(s)]|\mathcal{F}_{t_0}\right]$. Since we know
%\begin{equation*}\mathbb{E}[y(s)]=\int_{t_0}^t \int_{-d}^0\mathbb{E}[y(s+\tau)]\phi(\tau)\, {\rm d}\tau\, {\rm d}s,\end{equation*}we have\begin{align*}E_0(s)&=\tilde{\mathbb{E}}\left[\int_{t_0}^t \int_{-d}^0\mathbb{E}[y(s+\tau)]\phi(\tau)\, {\rm d}\tau\, {\rm d}s\vert\mathcal{F}_{t_0}\right]=\int_{t_0}^t \int_{-d}^0\tilde{\mathbb{E}}\left[\mathbb{E}[y(s+\tau)]\vert\mathcal{F}_{t_0}\right]\phi(\tau)\, {\rm d}\tau\, {\rm d}s\\&=\int_{t_0}^t \int_{-d}^0\tilde{\mathbb{E}}\left[e(s+\tau)\vert\mathcal{F}_{t_0}\right]\phi(\tau)\, {\rm d}\tau\, {\rm d}s=\int_{t_0}^t \int_{-d}^0E_0(s+\tau)\phi(\tau)\, {\rm d}\tau\, {\rm d}s\end{align*}
where in the last equality we exploit the fact that $e$ satisfies a deterministic equation, thus $\tilde{\mathbb{E}} [e(s)|\mathcal{F}_{t_0}]=e(s)$.
Notice that the stochastic integral with respect % see the file 131202
to $\tilde{Z} $ is a martingale, and has zero mean, hence $\tilde{\mathbb{E}}\left[\int_{t_0}^ty(s)\sigma_y\cdob{\rm d}\tilde{Z}(s)|\mathcal{F}_{t_0}\right]=0$ (for more details see \cite[Lemma 4.6]{BGPZ}).

Therefore, defining $M_{t_0}(t):=\tilde{\mathbb{E}}\left[y(t)|\mathcal{F}_{t_0}\right]$, we have that the couple $(M_{t_0},e)$ satisfies for $t\geq t_0$ the system (with random initial conditions)
\begin{align}
\label{system1}
\begin{cases}
{\rm d}M_{t_0}(t)=\left[(\epsilon+\my- \sigma_y\cdob \kappa)M_{t_0}(t)-\epsilon e(t)+ \int_{-d}^0 M_{t_0}(t+s)\phi(s)\, {\rm d}s\right]{\rm d}t
\\
{\rm d}e(t)=\my e(t){\rm d}t + \int_{-d}^0e(t+s)\phi(s)\,{\rm d}s{\rm d}t
\\
M_{t_0}(t_0)=y(t_0), \quad M_{t_0}(t_0+s)=y(t_0+s), \qquad s \in[-d,0],
\\
e(t_0)=\E\left[y(t_0)\right], \quad e(s)=\E\left[y(t_0+s)\right], \qquad s \in [-d,0].
\end{cases}
\end{align}
\subsection{Reformulation of the problem in an infinite-dimensional framework}
\label{subsec:Hilbert}
We introduce first the space
\begin{equation*}
  \calm_2=\R\times L^2(-d,0;\R)
\end{equation*}
whose elements are denoted as $\overbar x=(x_0,x_1)$. $\calm_2$ is a Hilbert space when endowed with the inner product $\langle (x_0,x_1),(y_0,y_1)\rangle_{\calm_2}=x_0y_0+\langle x_1,y_1\rangle$, the latter being the usual inner product of $L^2(-d,0;\R)$.\\
We will denote vectors in $\R^2$ with boldface letters: $\mathbf{a}=\xphi{b}{c}$; similarly we will write $\R^2$-valued functions as $\mathbf{f}(\cdot)=\xphi{g(\cdot)}{h(\cdot)}$.\\
The state space for the reformulation needed within this section is $\mathcal{M}_2^2=\left(\R\times L^2(-d,0;\R)\right)^{\oplus 2}\cong \mathbb{R}^2 \times L^2(-d,0;\mathbb{R}^2)$. Elements of $\calm_2^2$ will be written in any of the following equivalent ways:
\begin{equation}
  \label{notation1}
  \overbar{\mathbf{x}}=\begin{pmatrix}(x^{(1)}_0,x^{(1)}_1)\\(x^{(2)}_0,x^{(2)}_1)\end{pmatrix}\text{ with }(x^{(1)}_0,x^{(1)}_1),(x^{(2)}_0,x^{(2)}_1)\in\calm_2\ ,
\end{equation}
\begin{equation}
  \label{notation2}
  \overbar{\mathbf{x}}=\begin{pmatrix}\overbar x^{(1)}\\\overbar x^{(2)}\end{pmatrix}\text{ with }\overbar x^{(1)},\overbar x^{(2)}\in\calm_2\ ,
\end{equation}
\begin{equation}
  \label{notation3}
  \overbar{\mathbf{x}}=\left(\xphi{x^{(1)}_0}{x^{(2)}_0},\xphi{x^{(1)}_1}{x^{(2)}_1}\right)\text{ with }\xphi{x^{(1)}_0}{x^{(2)}_0}\in\R^2,\ \xphi{x^{(1)}_1}{x^{(2)}_1}\in L^2(-d,0;\R^2)\ ,
\end{equation}
\begin{equation}
  \label{notation4}
  \overbar{\mathbf{x}}=\left(\mathbf{x_0},\mathbf{x_1}\right)\text {with }\mathbf{x_0}\in\R^2,\ \mathbf{x_1}\in L^2(-d,0;\R^2)\ ;
\end{equation}
We then rewrite system \eqref{system1} in a more compact form. First for any fixed $\calf_{t_0}$-measurable $\calm^2_2$-valued random variable $\overbar{\mathbf{m}}=\left(\mathbf{m_0},\mathbf{m_1}\right)$ we consider the $2$-dimensional system
\begin{align}
\label{system2}
\begin{cases}
{\rm d}\mathbf{n}(t_0;t)=\left[C_0\mathbf{n}(t_0;t)+ \int_{-d}^0 \phi(s)\mathbf{n}(t_0;t+s)\,{\rm d}s\right]{\rm d}t
\\
\mathbf{n}(t_0;t_0)=\mathbf{m_0},
\\
\mathbf{n}(t_0;t_0+s)=\mathbf{m_1}(s), \qquad s \in[-d,0].
\end{cases}
\end{align}
where
\begin{equation*}
C_0:=\left(
\begin{array}{cc}
\epsilon+\my-\sigma_y\cdob \kappa& -\epsilon\\
0& \my
\end{array}
\right)
.
% \qquad \quad \
% \Phi(\cdot):=\left(
% \begin{array}{cc}
% \phi(\cdot)& 0
% \\
% 0&\phi(\cdot)
% \end{array}
% \right)
% .
\end{equation*}
The following is a simple generalization of \cite[Part II, Chapter 4, Theorem 3.2]{BENSOUSSAN_DAPRATO_DELFOUR_MITTER} to random initial conditions.
\begin{lemma}
  Given any fixed $\calf_{t_0}$-measurable $\calm^2_2$-valued random variable $\overbar{\mathbf{m}}$, the Cauchy problem (\ref{system2}) has a unique absolutely continuous solution. Moreover system \eqref{system1} is equivalent to \eqref{system2} above when we choose
\begin{equation}
  \label{vector_conditions}
  \mathbf{m_0}=\begin{pmatrix}y(t_0)\\\E[y(t_0)]\end{pmatrix}, \mathbf{m_1}=\begin{pmatrix}y(t_0+\cdot)\\\E[y(t_0+\cdot)]\end{pmatrix}\ ;
\end{equation}
indeed in this case $\mathbf{n}(t_0;t)=\xphi{M_{t_0}}{e}(t)$ for every $t\in[t_0-d,+\infty)$.
\end{lemma}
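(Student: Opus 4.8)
The plan is to reduce everything to the classical deterministic theory of linear delay differential equations by arguing pathwise, and then to recover adaptedness from the linearity of the equation.

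First I would fix $\omega\in\Omega$ and note that $\overbar{\mathbf m}(\omega)=(\mathbf m_0(\omega),\mathbf m_1(\omega))$ is a \emph{deterministic} element of $\calm_2^2\cong\R^2\times L^2(-d,0;\R^2)$. For this fixed $\omega$, problem \eqref{system2} is an autonomous linear $\R^2$-valued delay differential equation with bounded coefficient matrix $C_0$ and $L^2$ memory kernel $s\mapsto\phi(s)\,\mathrm{Id}_{\R^2}$; after the time shift $t\mapsto t-t_0$ it is exactly of the form covered by \cite[Part II, Chapter 4, Theorem 3.2]{BENSOUSSAN_DAPRATO_DELFOUR_MITTER}. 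That theorem yields, on every interval $[t_0,T]$ and hence on $[t_0,+\infty)$, a unique solution that is absolutely continuous on $[t_0,+\infty)$ and whose restriction to $[t_0-d,t_0]$ coincides with the prescribed initial segment. This provides, pathwise, both existence and uniqueness of $t\mapsto\mathbf n(t_0;t)(\omega)$.

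Next I would deal with measurability, the only point where ``random initial conditions'' enters. Since \eqref{system2} is linear, the solution map sending an initial datum $\overbar{\mathbf m}\in\calm_2^2$ to the corresponding solution in $C([t_0,T];\R^2)$ is a bounded linear operator — given, e.g., by the strongly continuous semigroup generated by the structure operator of the delay equation on $\calm_2^2$, or equivalently by the variation-of-constants formula built on the fundamental solution of the deterministic equation. A bounded linear operator is Borel measurable, so its composition with the $\calf_{t_0}$-measurable random variable $\overbar{\mathbf m}$ is $\calf_{t_0}$-measurable; hence $t\mapsto\mathbf n(t_0;t)$ is a well-defined process with absolutely continuous paths which is moreover $\calf_{t_0}$-measurable (in particular $\mathbb F$-adapted). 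I expect this to be the ``delicate'' step only in a bookkeeping sense; there is no genuine analytic obstacle, precisely because of linearity.

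Finally I would establish the equivalence with \eqref{system1}. Writing $\mathbf n=\xphi{n^{(1)}}{n^{(2)}}$ and using the explicit upper-triangular form of $C_0$, the vector equation \eqref{system2} decouples into
\begin{align*}
\mathrm{d}n^{(1)}(t)&=\Big[(\epsilon+\my-\sigma_y\cdob\kappa)n^{(1)}(t)-\epsilon\,n^{(2)}(t)+\int_{-d}^0\phi(s)n^{(1)}(t+s)\,\mathrm{d}s\Big]\mathrm{d}t,\\
\mathrm{d}n^{(2)}(t)&=\Big[\my\,n^{(2)}(t)+\int_{-d}^0\phi(s)n^{(2)}(t+s)\,\mathrm{d}s\Big]\mathrm{d}t,
\end{align*}
which, under the choice \eqref{vector_conditions} of $\overbar{\mathbf m}$, are exactly the two equations for $M_{t_0}$ and $e$ in \eqref{system1}, with the same initial data on $[t_0-d,t_0]$. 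Conversely, the pair $(M_{t_0},e)$ — with $M_{t_0}(t)=\tilde{\mathbb E}[y(t)\mid\calf_{t_0}]$, shown in Subsection \ref{subsec:equiv_P} to solve the first equation, and $e$ the absolutely continuous deterministic solution of \eqref{eq:efirst} — is an absolutely continuous solution of \eqref{system1}, hence of \eqref{system2} with initial datum \eqref{vector_conditions}. The uniqueness obtained above then forces $\mathbf n(t_0;t)=\xphi{M_{t_0}(t)}{e(t)}$ for every $t\in[t_0-d,+\infty)$, the identification on the history interval being immediate because the initial segments agree. This closes the argument.
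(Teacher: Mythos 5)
Your proof is correct and follows essentially the route the paper intends: the lemma is stated there without a written proof, as a ``simple generalization'' of the cited deterministic result to random initial conditions, and your pathwise application of that theorem together with measurability via linearity of the solution map is precisely what that phrase tacitly assumes. The concluding componentwise decoupling via the upper-triangular $C_0$ and the identification of $\mathbf{n}(t_0;\cdot)$ with $\xphi{M_{t_0}}{e}$ by uniqueness is likewise the straightforward verification the paper leaves to the reader.
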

We define next the operator $A_0:\mathcal{D}(A_0) \subset \mathcal{M}_2^2 \rightarrow \mathcal{M}_2^2$ as
\begin{equation}
  \begin{gathered}
  \label{defA0}
\mathcal{D}(A_0):= \left\{ \left(\mathbf{x_0},\mathbf{x}_1\right) \in \mathcal{M}^2_2: \mathbf{x_1} \in W^{1,2}(-d,0;\mathbb{R}^2), \ \mathbf{x_1}(0)=\mathbf{x_0}\right\},\\
A_0\left(\mathbf{x_0},\mathbf{x_1}\right):= \left(C_0\mathbf{x_0}+ \int_{-d}^0 \phi(s)\mathbf{x_1}(s)\,{\rm d}s,\frac{{\rm d}}{{\rm d}s}\mathbf{x_1}\right).
  \end{gathered}
\end{equation}

We can then reformulate system \eqref{system2} above as an evolution equation in $\calm^2_2$. Consider, again for any fixed $\calf_{t_0}$-measurable $\calm^2_2$-valued random variable $\overbar{\mathbf{m}}=\left(\mathbf{m_0},\mathbf{m_1}\right)$, the $\calm^2_2$-valued process $\overbar{\mathbf{N}}(t_0;\cdot)$ that is the solution on $[t_0,+\infty)$ of
\begin{equation}\label{INFINITE_DIMENSIONAL_STATE_EQUATION}
\begin{cases}
{\rm d} \overbar{\mathbf{N}}(t_0;t) = A_0 \overbar{\mathbf{N}}(t_0;t) {\rm d}t,\\
\overbar{\mathbf{N}}(t_0;t_0) =\overbar{\mathbf{m}}\ .
\end{cases}
\end{equation}
We collect now some useful results about the above equation, also for later reference; definitions of strict and weak solutions can be found for example in \cite[Appendix A]{DAPRATO_ZABCZYK_RED_BOOK}. Proofs are given in the Appendix.
\begin{proposition}
  \label{prop_semigroup}
  \begin{enumerate}[label=$(\roman{*})$]
  \item\label{item:1} The operator $A_0$ generates a strongly continuous semigroup $\left\{S(t)\right\}_{t\geq 0}$ in $\mathcal{M}^2_2$.
  \item $S(t)$ is a compact operator for every $t\geq d$.
  \item For every $\calf_{t_0}$-measurable $\calm^2_2$-valued random variable $\overbar{\mathbf{m}}\in\calm_2^2$ the process
\begin{equation}
  \label{semigroup_1}
S(t-t_0)\overbar{\mathbf{m}}\ ;
\end{equation}
is the unique weak (in distributional sense) solution of (\ref{INFINITE_DIMENSIONAL_STATE_EQUATION}); in particular
\begin{equation}
  \label{semigroup_2}
  \overbar{\mathbf{N}}(t_0;t)=\overbar{\mathbf{N}}(0;t-t_0)\ .
\end{equation}
Moreover if $\overbar{\mathbf{m}}\in\cald(A_0)$ a.s. then the solution is actually strict.

\item The Cauchy problem (\ref{INFINITE_DIMENSIONAL_STATE_EQUATION}) is equivalent to (\ref{system2}).
\item\label{item:5} Let $y$ be a solution of the second equation in (\ref{DYNAMICS_WEALTH_LABOR_INCOME}) on $[0,t_0]$; when choosing $\overbar{\mathbf{m}}$ as in (\ref{vector_conditions}), (\ref{INFINITE_DIMENSIONAL_STATE_EQUATION}) is equivalent to (\ref{system1}) and in this case we have
\begin{equation*}
  \overbar{\mathbf{N}}(t_0;t)=S(t-t_0)\overbar{\mathbf{m}}=\left(\mathbf{n}(t_0;t),\mathbf{n}(t_0;t+\cdot)\right)=\left(\begin{pmatrix}M_{t_0}(t)\\e(t)\end{pmatrix},\begin{pmatrix}\left\{M_{t_0}(t+s)\right\}_{s\in[-d,0]}\\\left\{e(t+s)\right\}_{s\in[-d,0]}\end{pmatrix}\right)\ .
\end{equation*}
\end{enumerate}
\end{proposition}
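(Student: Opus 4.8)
The plan is to prove Proposition \ref{prop_semigroup} by reducing each item to a known result about linear autonomous delay differential equations in the product space $\calm_2$ (or here $\calm_2^2$), as developed in \cite{BENSOUSSAN_DAPRATO_DELFOUR_MITTER}, and then handling the (routine) modifications needed to pass to random initial data and to match with the concrete processes $M_{t_0}, e$.

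\medskip

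\textbf{Item \ref{item:1} (generation of a $C_0$-semigroup) and item (ii) (compactness for $t\ge d$).} First I would recognize $A_0$ as the infinitesimal generator associated with the delay system \eqref{system2}, written in the standard ``structural operator'' form on $\calm_2^2 \cong \R^2\times L^2(-d,0;\R^2)$: the $\R^2$-component carries the pointwise action $C_0\mathbf{x_0}+\int_{-d}^0\phi(s)\mathbf{x_1}(s)\,{\rm d}s$ and the $L^2$-component carries the transport operator $\tfrac{\rm d}{{\rm d}s}$ with the coupling boundary condition $\mathbf{x_1}(0)=\mathbf{x_0}$. This is exactly the setting of \cite[Part II, Chapter 4]{BENSOUSSAN_DAPRATO_DELFOUR_MITTER}: since $\phi\in L^2(-d,0;\R)$ and $C_0$ is a bounded (here $2\times 2$) matrix, that reference gives both that $A_0$ generates a strongly continuous semigroup $\{S(t)\}_{t\ge0}$ and that $S(t)$ is compact for $t\ge d$ (the delay ``forgets'' the initial $L^2$-history after time $d$, so $S(t)$ factors through a finite-dimensional-plus-smoothing map; alternatively one invokes the compactness result for delay semigroups in e.g.\ \cite{DAPRATO_ZABCZYK_RED_BOOK}-type references). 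I would simply cite these, noting that the only difference from the scalar case is that the coefficient is the matrix $C_0$ rather than a scalar, which changes nothing in the arguments.

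\medskip

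\textbf{Items (iii) and (iv) (weak/strict solutions and equivalence with \eqref{system2}).} Here the point is that for deterministic initial data $\overbar{\mathbf{m}}\in\calm_2^2$ the function $t\mapsto S(t-t_0)\overbar{\mathbf{m}}$ is by definition the unique weak (mild) solution of \eqref{INFINITE_DIMENSIONAL_STATE_EQUATION}, and it is a strict solution when $\overbar{\mathbf{m}}\in\cald(A_0)$, by the standard theory of abstract Cauchy problems (\cite[Appendix A]{DAPRATO_ZABCZYK_RED_BOOK}). The time-homogeneity \eqref{semigroup_2} is immediate from the semigroup property $S(t-t_0)=S((t-t_0))$ depending only on the elapsed time. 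To pass to $\calf_{t_0}$-measurable random initial data I would argue $\omega$ by $\omega$: for a.e.\ $\omega$ the deterministic statement applies to $\overbar{\mathbf{m}}(\omega)$, and measurability in $\omega$ of $t\mapsto S(t-t_0)\overbar{\mathbf{m}}(\omega)$ follows from strong continuity of $S(\cdot)$ together with measurability of $\overbar{\mathbf{m}}$ (composition of a continuous map with a measurable one). The equivalence of \eqref{INFINITE_DIMENSIONAL_STATE_EQUATION} with \eqref{system2} is the identification, classical in the product-space framework, between the $\R^2$-component $\mathbf{N}_0(t)$ of the abstract solution and the delay solution $\mathbf{n}(t_0;t)$, with the $L^2$-component being the history segment $\mathbf{n}(t_0;t+\cdot)$; one checks this by noting that a strict solution of \eqref{INFINITE_DIMENSIONAL_STATE_EQUATION} has $L^2$-component satisfying the transport equation with boundary value equal to the $\R^2$-component, which forces it to be the shifted history, and then the $\R^2$-component equation becomes precisely \eqref{system2}. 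By density and continuous dependence this extends from $\cald(A_0)$ to all of $\calm_2^2$ for the appropriate (mild/weak) notion of solution.

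\medskip

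\textbf{Item \ref{item:5} (matching with the concrete processes).} This is where the earlier lemma and Remark \ref{rm:changevar} are used: with the specific choice \eqref{vector_conditions} of random initial datum, the $\R^2$-valued delay solution $\mathbf{n}(t_0;t)$ of \eqref{system2} coincides with $\bigl(M_{t_0}(t),e(t)\bigr)^\top$, because $(M_{t_0},e)$ was shown in the preceding lemma to solve \eqref{system1}, and \eqref{system1} is \eqref{system2} with exactly these initial conditions. Hence the abstract solution $\overbar{\mathbf{N}}(t_0;t)=S(t-t_0)\overbar{\mathbf{m}}$ unwinds to the displayed pair of the state and its history segment. The only mild subtlety is that the initial datum now genuinely depends on $\omega$ (through $y(t_0)$ and the history $y(t_0+\cdot)$), but that was already handled in the previous step; and one needs $y$ to be defined (continuous, adapted) on $[0,t_0]$ so that \eqref{vector_conditions} makes sense as an $\calf_{t_0}$-measurable $\calm_2^2$-valued random variable, which is the hypothesis of item \ref{item:5}.

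\medskip

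\textbf{Main obstacle.} None of the steps is deep; the only thing requiring a little care is the compactness claim in item (ii), where the matrix-valued (rather than scalar) coefficient and the $L^2$ (rather than continuous) kernel $\phi$ mean one should make sure the cited compactness result in \cite{BENSOUSSAN_DAPRATO_DELFOUR_MITTER} applies verbatim or state the small generalization; I expect this to be the one place where a few extra lines (or a careful citation) are needed, and it is the step I would write out most carefully. Everything else is a matter of correctly bookkeeping the product-space identification and the trivial $\omega$-by-$\omega$ extension to random initial conditions.
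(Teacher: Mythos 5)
Your proposal is correct and follows essentially the same route as the paper: both reduce each item to the classical product-space theory of linear delay equations (generation and well-posedness from the standard references, equivalence via the identification of the abstract solution with the pair state/history segment, and the trivial $\omega$-by-$\omega$ extension to $\calf_{t_0}$-measurable initial data), with item \ref{item:5} following from the preceding lemma exactly as you say. The only difference is bibliographic: for compactness of $S(t)$, $t\geq d$, the paper simply cites \cite[Chapter 7, Lemma 1.2]{HALE_VERDUYN_LUNEL_BOOK} (and uses \cite[Proposition A.27]{DAPRATO_ZABCZYK_RED_BOOK} for generation, \cite[Part II, Chapter 4, Theorem 4.3]{BENSOUSSAN_DAPRATO_DELFOUR_MITTER} for the equivalence), so the step you flagged as the main obstacle is handled there by a direct citation rather than a new argument.
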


\subsection{Spectral properties of $A_0$}
\label{subsec:spectral}
The following result is an immediate consequence of \cite[Chapter 7, Lemma 2.1 and Theorem 4.2]{HALE_VERDUYN_LUNEL_BOOK}
\begin{lemma}
  \label{lemma_speck}
\begin{enumerate}[label=$(\roman{*})$]
\item  The spectrum of the operator $A_0$ is given by
\begin{equation*}
\{\lambda \in \mathbb{C}:K(\lambda)=0\},
\end{equation*}
where
\begin{equation*}
K(\lambda):= K_1(\lambda) K_2(\lambda),
\end{equation*}
with
\begin{equation*}
K_1(\lambda):= \lambda- (\epsilon+\my-\sigma_y\cdob \kappa)-\int_{-d}^0 e^{\lambda s}\phi(s)\, {\rm d}s,
\end{equation*}
\begin{equation*}
K_2(\lambda):= \lambda-\my- \int_{-d}^0 e^{\lambda s}\phi(s)\, {\rm d}s=K_1(\lambda)-(\epsilon-\sigma_y\cdob\kappa)\ .
\end{equation*}
In particular the spectral bound of $A_0$ is
\begin{equation*}
  \lambda_0=\sup\left\{{\rm Re}\lambda \colon K(\lambda)=0\right\}\ .
\end{equation*}
\item The spectrum of $A_0$ coincides with its point spectrum and is a discrete (thus countable) set.
\end{enumerate}
\end{lemma}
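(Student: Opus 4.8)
The statement to prove is Lemma~\ref{lemma_speck}, which characterizes the spectrum of $A_0$ via the characteristic equation $K(\lambda)=0$ and asserts that this spectrum is a discrete point spectrum. The plan is to reduce everything to the classical theory of delay differential equations as developed in Hale--Verduyn Lunel, exploiting the specific block-triangular structure of the coefficient matrix $C_0$.

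\textbf{Setup and eigenvalue equation.} First I would write down the resolvent equation: $\lambda \overbar{\mathbf{x}}-A_0\overbar{\mathbf{x}}=\overbar{\mathbf{y}}$ for $\overbar{\mathbf{x}}=(\mathbf{x_0},\mathbf{x_1})\in\cald(A_0)$. The second component gives $\lambda\mathbf{x_1}-\mathbf{x_1}'=\mathbf{y_1}$, an ODE on $(-d,0)$ whose solution is $\mathbf{x_1}(s)=e^{\lambda s}\mathbf{x_0}-\int_s^0 e^{\lambda(s-r)}\mathbf{y_1}(r)\,{\rm d}r$ using the boundary constraint $\mathbf{x_1}(0)=\mathbf{x_0}$; the first component then yields an algebraic equation for $\mathbf{x_0}\in\R^2$ of the form $\Delta(\lambda)\mathbf{x_0}=(\text{known vector depending on }\overbar{\mathbf{y}})$, where $\Delta(\lambda):=\lambda I-C_0-\int_{-d}^0 e^{\lambda s}\phi(s)\,{\rm d}s\,I$ is the characteristic matrix. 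Hence $\lambda\in\sigma(A_0)$ precisely when $\det\Delta(\lambda)=0$. Because $C_0$ is upper triangular with diagonal entries $\epsilon+\mu_y-\sigma_y\cdob\kappa$ and $\mu_y$, and the delay term is scalar (it multiplies the identity), $\Delta(\lambda)$ is itself upper triangular, so $\det\Delta(\lambda)=K_1(\lambda)K_2(\lambda)=K(\lambda)$ with $K_1,K_2$ exactly as stated; the identity $K_2(\lambda)=K_1(\lambda)-(\epsilon-\sigma_y\cdob\kappa)$ is then immediate from the definitions.

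\textbf{Invoking Hale--Verduyn Lunel.} Rather than redo the Fredholm-theoretic arguments, I would appeal directly to \cite[Chapter 7, Lemma 2.1 and Theorem 4.2]{HALE_VERDUYN_LUNEL_BOOK}. The point is that $A_0$ is the infinitesimal generator of the solution semigroup of the linear autonomous retarded functional differential equation~\eqref{system2}, written in the standard product-space (history-space) formulation; this is exactly the class covered there. Lemma~2.1 of that chapter identifies the spectrum with the zero set of the characteristic function, and Theorem~4.2 gives that the spectrum consists only of eigenvalues (point spectrum), that each eigenvalue has finite multiplicity, and that in any right half-plane $\{\operatorname{Re}\lambda\geq\beta\}$ there are only finitely many of them — in particular the set of eigenvalues is discrete and countable, and the spectral bound $\lambda_0=\sup\{\operatorname{Re}\lambda:K(\lambda)=0\}$ is finite and attained. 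One should double check that Hypothesis~\ref{Hyp_K} (or at least the integrability $\phi\in L^2(-d,0;\R)$) guarantees the entire functions $K_1,K_2$ are not identically zero and that the relevant hypotheses of Hale--Verduyn Lunel hold; this is routine since $\phi\in L^2\subset L^1(-d,0)$.

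\textbf{Main obstacle.} I do not expect a serious obstacle, since the result is essentially a citation dressed for the present setting; the only mild care needed is notational bookkeeping — matching the two-dimensional, scalar-delay system~\eqref{system2} to the general $n$-dimensional RFDE framework of \cite{HALE_VERDUYN_LUNEL_BOOK} so that "the characteristic function" there becomes $\det\Delta(\lambda)$ here — and then performing the determinant computation on the triangular matrix $\Delta(\lambda)$ to factor it as $K_1 K_2$. Strictly speaking this factorization, hence the statement $\sigma(A_0)=\{K(\lambda)=0\}$, is the only genuinely new (and trivial) computation; discreteness and "spectrum $=$ point spectrum" are quoted verbatim. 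If one preferred a self-contained argument for discreteness, it would follow from Proposition~\ref{prop_semigroup}\ref{item:5} — compactness of $S(t)$ for $t\geq d$ — via the spectral mapping theorem for eventually compact semigroups, but citing Hale--Verduyn Lunel is cleaner.
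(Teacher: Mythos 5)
Your proposal is correct and follows essentially the same route as the paper, which likewise treats the lemma as an immediate consequence of \cite[Chapter 7, Lemma 2.1 and Theorem 4.2]{HALE_VERDUYN_LUNEL_BOOK} once one observes that the characteristic matrix $\Delta(\lambda)=\lambda I-C_0-\bigl(\int_{-d}^0 e^{\lambda s}\phi(s)\,{\rm d}s\bigr)I$ is upper triangular, so that $\det\Delta(\lambda)=K_1(\lambda)K_2(\lambda)$; your resolvent computation is the same one the paper records in the Appendix (Lemma \ref{lemma_resolvent}). The only blemish is a harmless sign slip in your variation-of-constants formula, which should read $\mathbf{x_1}(s)=e^{\lambda s}\mathbf{x_0}+\int_s^0 e^{\lambda(s-r)}\mathbf{y_1}(r)\,{\rm d}r$, and which does not affect the characterization of the spectrum.
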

We can explicitly compute the resolvent operator of $A_0$ (a proof is sketched in the Appendix):
\begin{lemma}
\label{lemma_resolvent}
  Let $R(A_0)$ denote the resolvent set of $A_0$ and let $\lambda \in \mathbb{R} \cap R(A_0)$; then the resolvent operator of $A_0$ at $\lambda$ is given by
\begin{equation*}
R(\lambda,A_0)\left(\xphi{m_0}{e_0},\xphi{m_1}{e_1}\right)=\left(\xphi{u_0}{v_0},\xphi{u_1}{v_1}\right),
\end{equation*}
with
\begin{align}
\nonumber v_0&=\frac{1}{K_2(\lambda)}\left[e_0+ \int_{-d}^0\int_{-d}^s e^{-\lambda(s-\tau)}\phi(\tau)\,{\rm d}\tau \ e_1(s)\,{\rm d}s \right]\ ,\\
\label{u0}
u_0&=\frac{1}{K_1(\lambda)}\left[m_0+ \int_{-d}^0\int_{-d}^s e^{-\lambda(s-\tau)}\phi(\tau)\,{\rm d}\tau\ m_1(s)\,{\rm d}s\right]
\\
\nonumber&\phantom{=}-\frac{\epsilon}{K(\lambda)}\left[e_0+ \int_{-d}^0\int_{-d}^s e^{-\lambda(s-\tau)}\phi(\tau)\,{\rm d}\tau\ e_1(s)\,{\rm d}s\right]\ ,\\
\nonumber u_1(s)&=u_0e^{\lambda s}+\int_s^0e^{-\lambda(\tau-s)}m_1(\tau){\rm d} \tau\ ,\\
\nonumber v_1(s)&=v_0e^{\lambda s}+\int_s^0e^{-\lambda(\tau-s)}e_1(\tau){\rm d} \tau\ .
\end{align}
\end{lemma}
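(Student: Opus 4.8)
The plan is to solve the resolvent equation $(\lambda\Id-A_0)(\mathbf{u_0},\mathbf{u_1})=(\mathbf{m_0},\mathbf{m_1})$ directly, for an arbitrary datum $(\mathbf{m_0},\mathbf{m_1})\in\calm_2^2$ written as $\mathbf{m_0}=\xphi{m_0}{e_0}$, $\mathbf{m_1}=\xphi{m_1}{e_1}$, seeking a solution $(\mathbf{u_0},\mathbf{u_1})=\left(\xphi{u_0}{v_0},\xphi{u_1}{v_1}\right)\in\mathcal{D}(A_0)$. Since $A_0$ generates a strongly continuous semigroup (Proposition \ref{prop_semigroup}), for $\lambda$ in the resolvent set such a solution exists and is unique, so exhibiting it identifies $R(\lambda,A_0)(\mathbf{m_0},\mathbf{m_1})$. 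Unfolding the definition \eqref{defA0}, the equation is equivalent to the pair
\begin{align*}
(\lambda\Id-C_0)\mathbf{u_0}-\int_{-d}^0\phi(s)\mathbf{u_1}(s)\,{\rm d}s&=\mathbf{m_0},\\
\lambda\mathbf{u_1}(s)-\mathbf{u_1}'(s)&=\mathbf{m_1}(s),\qquad s\in(-d,0),
\end{align*}
together with the domain conditions $\mathbf{u_1}\in W^{1,2}(-d,0;\R^2)$ and $\mathbf{u_1}(0)=\mathbf{u_0}$. First I would integrate the second (scalar, first-order linear) equation: multiplying by $e^{-\lambda s}$ and integrating from $s$ to $0$ using $\mathbf{u_1}(0)=\mathbf{u_0}$ gives $\mathbf{u_1}(s)=\mathbf{u_0}e^{\lambda s}+\int_s^0 e^{-\lambda(\tau-s)}\mathbf{m_1}(\tau)\,{\rm d}\tau$, whose two components are exactly the claimed $u_1$ and $v_1$ once $u_0,v_0$ are found; note that this $\mathbf{u_1}$ automatically belongs to $W^{1,2}(-d,0;\R^2)$ whenever $\mathbf{m_1}\in L^2$, so no extra regularity on the data is needed.

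Next I would substitute this $\mathbf{u_1}$ into the first equation. The integral $\int_{-d}^0\phi(s)\mathbf{u_1}(s)\,{\rm d}s$ splits as $\big(\int_{-d}^0 e^{\lambda s}\phi(s)\,{\rm d}s\big)\mathbf{u_0}$ plus the double integral $\int_{-d}^0\phi(s)\int_s^0 e^{-\lambda(\tau-s)}\mathbf{m_1}(\tau)\,{\rm d}\tau\,{\rm d}s$; applying Fubini on the triangle $\{-d\le s\le\tau\le 0\}$ and relabelling the two integration variables turns the latter into $\int_{-d}^0\big(\int_{-d}^s e^{-\lambda(s-\tau)}\phi(\tau)\,{\rm d}\tau\big)\mathbf{m_1}(s)\,{\rm d}s$, i.e.\ the kernel appearing in the statement. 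Carrying this term to the right-hand side, the first equation becomes a linear system $B(\lambda)\mathbf{u_0}=\mathbf{g}$ in $\R^2$, with $B(\lambda):=\lambda\Id-C_0-\big(\int_{-d}^0 e^{\lambda s}\phi(s)\,{\rm d}s\big)\Id$ and $\mathbf{g}=\xphi{g_1}{g_2}$ gathering $\mathbf{m_0}$ and the kernel term. By the explicit form of $C_0$, the matrix $B(\lambda)$ is upper triangular with diagonal entries $K_1(\lambda),K_2(\lambda)$ and $(1,2)$-entry $\epsilon$; since $\lambda\in\R\cap R(A_0)$ forces $K(\lambda)=K_1(\lambda)K_2(\lambda)\neq0$ (Lemma \ref{lemma_speck}), $B(\lambda)$ is invertible with
$$B(\lambda)^{-1}=\begin{pmatrix}1/K_1(\lambda)&-\epsilon/K(\lambda)\\0&1/K_2(\lambda)\end{pmatrix},$$
and reading off $\mathbf{u_0}=B(\lambda)^{-1}\mathbf{g}$ componentwise yields precisely \eqref{u0} and the stated formula for $v_0$; feeding $u_0,v_0$ back into the solution of the ODE gives $u_1,v_1$. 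Uniqueness of the solution in $\mathcal{D}(A_0)$ is automatic since $\lambda$ lies in the resolvent set, so the formula obtained is $R(\lambda,A_0)$.

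I do not expect a genuinely hard step: once $A_0$ is written out, the proof is a scalar first-order linear ODE together with the inversion of a $2\times 2$ matrix. The two places to treat with care are the orientation of the limits in the Fubini exchange, so that the kernel emerges as $\int_{-d}^s e^{-\lambda(s-\tau)}\phi(\tau)\,{\rm d}\tau$ with the correct bounds and sign, and the off-diagonal entry $-\epsilon$ of $C_0$, which is exactly what produces the coupling term $-\epsilon/K(\lambda)$ in the formula for $u_0$; it is the upper-triangular structure of $C_0$ that makes $B(\lambda)^{-1}$, and hence the whole resolvent, explicit. As an alternative one could verify the displayed formulas a posteriori, by differentiating $u_1,v_1$, checking $\mathbf{u_1}(0)=\mathbf{u_0}$, and substituting back into \eqref{defA0}.
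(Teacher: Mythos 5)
Your proposal is correct and follows essentially the same route as the paper: the paper likewise solves $(\lambda-A_0)(\mathbf{u_0},\mathbf{u_1})=(\mathbf{m_0},\mathbf{m_1})$ by integrating the first-order ODE for $\mathbf{u_1}$, substituting back, and exploiting the triangular structure (there by successive substitution, here packaged as inverting the upper triangular matrix $B(\lambda)$, which is only a cosmetic difference). Your checks on the Fubini exchange, the sign of the off-diagonal entry, and the fact that $\lambda\in\R\cap R(A_0)$ forces $K_1(\lambda)K_2(\lambda)\neq 0$ via Lemma \ref{lemma_speck} are exactly the points the paper also relies on.
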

A crucial tool will be the following well known fact:
\begin{lemma}
  \label{lem_laplace}
 For every real $\lambda$ such that $\lambda>\lambda_0$ and every $\overbar{\mathbf{m}}\in\calm^2_2$ we have
    \begin{equation}
      \label{LAPLACE_SEMIGROUP}
  \int_0^{+\infty}e^{-\lambda t}S(t)\overbar{\mathbf{m}}{\rm d}t=R(\lambda,A_0)\overbar{\mathbf{m}}\ .
\end{equation}
\end{lemma}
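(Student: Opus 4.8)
The statement to prove is Lemma~\ref{lem_laplace}: for every real $\lambda>\lambda_0$ and every $\overbar{\mathbf{m}}\in\calm^2_2$,
\[
\int_0^{+\infty}e^{-\lambda t}S(t)\overbar{\mathbf{m}}\,{\rm d}t=R(\lambda,A_0)\overbar{\mathbf{m}}\ .
\]

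\textbf{Approach.} The plan is to invoke the classical Hille--Yosida/Laplace-transform representation of the resolvent of a $C_0$-semigroup, and then to check that the hypotheses of that representation are met in our concrete situation. Recall the general fact (see e.g.\ \cite[Appendix A]{DAPRATO_ZABCZYK_RED_BOOK} or any standard reference on strongly continuous semigroups): if $\{S(t)\}_{t\ge 0}$ is a $C_0$-semigroup on a Banach space with generator $A_0$ and type (growth bound) $\omega_0$, then for every complex $\lambda$ with ${\rm Re}\,\lambda>\omega_0$ one has $\lambda\in R(A_0)$ and $R(\lambda,A_0)\overbar{\mathbf{m}}=\int_0^{+\infty}e^{-\lambda t}S(t)\overbar{\mathbf{m}}\,{\rm d}t$, the integral converging as a Bochner integral. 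So the only thing that really needs to be established is that the real number $\lambda_0$ appearing in the statement (the spectral bound of $A_0$, defined in Lemma~\ref{lemma_speck}) can be used here in place of $\omega_0$.

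\textbf{Key steps.} First I would recall from Proposition~\ref{prop_semigroup}\ref{item:1} that $A_0$ generates a $C_0$-semigroup $\{S(t)\}_{t\ge0}$ on $\calm^2_2$, and from Proposition~\ref{prop_semigroup}(ii) that $S(t)$ is compact for $t\ge d$. The key consequence of eventual compactness is that the semigroup has no essential spectrum beyond the origin asymptotically; more precisely, for eventually compact (or eventually norm-continuous) semigroups the \emph{spectral mapping theorem} holds, so the growth bound coincides with the spectral bound: $\omega_0(A_0)=s(A_0)=\lambda_0$. This is exactly \cite[Chapter~7]{HALE_VERDUYN_LUNEL_BOOK} in the delay-equation setting, which is already cited for Lemma~\ref{lemma_speck}; equivalently one can cite the Arendt--Batty--Neubrander--type results for eventually compact semigroups. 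Second, with $\omega_0=\lambda_0$ in hand, I fix any real $\lambda>\lambda_0$: then $\lambda>\omega_0$, so by the Hille--Yosida representation the Bochner integral $\int_0^{+\infty}e^{-\lambda t}S(t)\overbar{\mathbf{m}}\,{\rm d}t$ converges absolutely (since $\|S(t)\|\le M e^{\omega t}$ for any $\omega\in(\omega_0,\lambda)$) and equals $R(\lambda,A_0)\overbar{\mathbf{m}}$. Third, for completeness I would note that $\lambda\in R(A_0)$ for such $\lambda$ is consistent with Lemma~\ref{lemma_speck}, since $\lambda>\lambda_0\ge{\rm Re}\,\mu$ for every $\mu$ in the (point) spectrum forces $K(\lambda)\ne0$.

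\textbf{Main obstacle.} The only genuine point to be careful about is the identity $\omega_0(A_0)=\lambda_0$, i.e.\ that the growth bound of the semigroup equals its spectral bound. For a general $C_0$-semigroup this can fail, and the Laplace transform formula only converges a priori for ${\rm Re}\,\lambda>\omega_0$, which could be strictly larger than $\lambda_0$. What rescues us is the eventual compactness of $S(t)$ (Proposition~\ref{prop_semigroup}(ii)): eventually compact semigroups satisfy the spectral mapping theorem, hence $\omega_0=s(A_0)=\lambda_0$, and then the Laplace representation is valid on the full half-plane ${\rm Re}\,\lambda>\lambda_0$. Everything else is the standard resolvent-as-Laplace-transform computation, which I would not reproduce in detail.
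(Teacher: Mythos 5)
Your proof is correct and follows essentially the same route as the paper: the Laplace-transform representation of the resolvent holds for real $\lambda$ larger than the type of $S(t)$, and the eventual compactness of $S(t)$ (for $t\geq d$) forces the type to equal the spectral bound $\lambda_0$, which is exactly the paper's argument (the paper cites \cite[Part II, Chapter 1, Corollary 2.5]{BENSOUSSAN_DAPRATO_DELFOUR_MITTER} for this last fact, whereas you invoke the spectral mapping theorem for eventually compact semigroups, an equivalent justification).
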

\begin{proof}
 Identity (\ref{LAPLACE_SEMIGROUP}) is well known to hold for all real $\lambda$ that are larger than the type of $S(t)$. Since $S(t)$ is compact for every $t\geq d$, its type is actually equal to its spectral radius $\lambda_0$, see for example \cite[Part II, Chapter 1, Corollary 2.5]{BENSOUSSAN_DAPRATO_DELFOUR_MITTER}.
\end{proof}
Note asking $\lambda>\lambda_0$ is more restrictive than just asking that $\lambda\in \R\cap R(A_0)$.

\subsection{A formula for the human capital}
\label{subsec:formula_HC}
\begin{theorem}
\label{thm_HC}
  Assume that $r+\delta>\lambda_0$; then
  \begin{multline*}
    HC(t_0)=\frac{1}{K_1(r+\delta)}\left[y(t_0)+ \int_{-d}^0\int_{-d}^s e^{-(r+\delta)(s-\tau)}\phi(\tau)\,{\rm d}\tau \ y(t_0+s)\,{\rm d}s\right]\\
-\frac{\epsilon}{K(r+\delta)}\left[e(t_0)+ \int_{-d}^0\int_{-d}^s e^{-(r+\delta)(s-\tau)}\phi(\tau)\,{\rm d}\tau\  e(t_0+s)\,{\rm d}s\right]\ .
  \end{multline*}
\end{theorem}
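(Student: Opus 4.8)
The plan is to combine the integral representation of human capital from \eqref{EXPRESSION_II} with the Hilbert-space reformulation of the dynamics of $\tilde{\mathbb E}[y(\cdot)\mid\mathcal F_{t_0}]$ developed in Subsections \ref{subsec:Hilbert} and \ref{subsec:spectral}. Starting from \eqref{def_HC} and \eqref{EXPRESSION_II}, we have
\[
HC(t_0)=\int_{t_0}^{+\infty}e^{-(r+\delta)(s-t_0)}\tilde{\mathbb E}_s\bigl[y(s)\mid\mathcal F_{t_0}\bigr]\,{\rm d}s=\int_{0}^{+\infty}e^{-(r+\delta)u}M_{t_0}(t_0+u)\,{\rm d}u,
\]
after the change of variable $u=s-t_0$ and recalling $M_{t_0}(s)=\tilde{\mathbb E}[y(s)\mid\mathcal F_{t_0}]$. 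By Proposition \ref{prop_semigroup}\ref{item:5}, the pair $(M_{t_0}(t),e(t))$ together with its history segment is exactly the first component of $\overbar{\mathbf N}(t_0;t)=S(t-t_0)\overbar{\mathbf m}$, where $\overbar{\mathbf m}$ is the $\mathcal F_{t_0}$-measurable random variable given by \eqref{vector_conditions}. Hence the scalar $M_{t_0}(t_0+u)$ is obtained by composing $S(u)\overbar{\mathbf m}$ with the bounded linear functional on $\calm_2^2$ that extracts the first real coordinate, call it $\pi_1$, so that $HC(t_0)=\pi_1\bigl(\int_0^{+\infty}e^{-(r+\delta)u}S(u)\overbar{\mathbf m}\,{\rm d}u\bigr)$.

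Next I would invoke Lemma \ref{lem_laplace}: since by hypothesis $r+\delta>\lambda_0$, the Laplace transform identity \eqref{LAPLACE_SEMIGROUP} gives
\[
\int_0^{+\infty}e^{-(r+\delta)u}S(u)\overbar{\mathbf m}\,{\rm d}u=R(r+\delta,A_0)\overbar{\mathbf m}.
\]
This requires a small justification that one may pass the bounded functional $\pi_1$ and the (conditional) expectation inside the Bochner integral, and that the integral representation \eqref{EXPRESSION_II} of $HC(t_0)$ is legitimate — finiteness here follows from $r+\delta>\lambda_0$, which controls the exponential growth rate of $\|S(u)\|$. Then I would plug in the explicit formula for the resolvent from Lemma \ref{lemma_resolvent}, evaluated at $\lambda=r+\delta$ and at the random datum $\overbar{\mathbf m}$ whose components are $m_0=y(t_0)$, $m_1(s)=y(t_0+s)$, $e_0=e(t_0)=\mathbb E[y(t_0)]$, $e_1(s)=e(t_0+s)$. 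Reading off the first coordinate $u_0$ from \eqref{u0} yields precisely
\[
HC(t_0)=\frac{1}{K_1(r+\delta)}\Bigl[y(t_0)+\int_{-d}^0\!\!\int_{-d}^s e^{-(r+\delta)(s-\tau)}\phi(\tau)\,{\rm d}\tau\ y(t_0+s)\,{\rm d}s\Bigr]-\frac{\epsilon}{K(r+\delta)}\Bigl[e(t_0)+\int_{-d}^0\!\!\int_{-d}^s e^{-(r+\delta)(s-\tau)}\phi(\tau)\,{\rm d}\tau\ e(t_0+s)\,{\rm d}s\Bigr],
\]
which is the claimed formula.

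The main obstacle, I expect, is the careful handling of the random initial condition: $\overbar{\mathbf m}$ in \eqref{vector_conditions} is $\mathcal F_{t_0}$-measurable rather than deterministic, so one must make sure that all the operator-theoretic identities (the semigroup representation of the solution to \eqref{INFINITE_DIMENSIONAL_STATE_EQUATION}, the Laplace transform formula, and the resolvent computation) hold $\omega$-by-$\omega$ or, equivalently, for random data — this is why the excerpt took pains to state Proposition \ref{prop_semigroup} and the preceding lemma for $\mathcal F_{t_0}$-measurable $\overbar{\mathbf m}$. One also has to interchange the conditional expectation $\tilde{\mathbb E}[\,\cdot\mid\mathcal F_{t_0}]$ with the time integral defining $HC(t_0)$, which is justified by Fubini–Tonelli once integrability is established, and use that $e$ solves a deterministic equation so $\tilde{\mathbb E}[e(s)\mid\mathcal F_{t_0}]=e(s)$ as already noted before \eqref{system1}. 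Apart from these measurability/integrability bookkeeping points, the remaining work is the purely algebraic substitution into \eqref{u0}, which is routine.
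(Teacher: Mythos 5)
Your argument is correct and follows essentially the same route as the paper's own proof: rewrite $HC(t_0)$ via \eqref{EXPRESSION_II} in terms of $M_{t_0}$, identify $M_{t_0}$ with the first component of the semigroup solution $S(\cdot)\overbar{\mathbf{m}}$ via Proposition \ref{prop_semigroup}, apply the Laplace-transform identity of Lemma \ref{lem_laplace} under $r+\delta>\lambda_0$, and read off the first coordinate of the resolvent from Lemma \ref{lemma_resolvent}. The bookkeeping points you flag (random $\mathcal F_{t_0}$-measurable data, Fubini, $\tilde{\mathbb{E}}[e(s)\mid\mathcal F_{t_0}]=e(s)$) are exactly the ones the paper handles through its statements of Proposition \ref{prop_semigroup} for random initial conditions and the preliminary computations of Subsection \ref{subsec:equiv_P}.
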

\begin{proof}
Recall (\ref{def_HC}) and let $\overbar{\mathbf{m}}=\left(\mathbf{m_0},\mathbf{m_1}\right)$ be given by (\ref{vector_conditions}). We can rewrite the Human Capital as follows; we denote here by $P_{1,0}$ the projection on the first finite dimensional component of $\calm_2^2$, i.e.
\begin{equation*}
  P_{1,0}\begin{pmatrix}(x^{(1)}_0,x^{(1)}_1)\\(x^{(2)}_0,x^{(2)}_1)\end{pmatrix}=x^{(1)}_0\ .
\end{equation*}
We have
\begin{align*}
\frac{1}{\xi(t_0)}\mathbb{E}& \left[\int_{t_0}^{\infty}\xi(s)y(s){\rm d}s|\mathcal{F}_{t_0}\right]=e^{(r+\delta)t_0}\int_{t_0}^{\infty}e^{-(r+\delta)s}\tilde{\mathbb{E}}\left[y(s)|\mathcal{F}_{t_0}\right]\, {\rm d}s&\text{(by (\ref{EXPRESSION_II}))}\\
  &=e^{(r+\delta)t_0}\int_{t_0}^{\infty}e^{-(r+\delta)s}M_{t_0}(s)\, {\rm d}s& \\
  &=e^{(r+\delta)t_0}\int_{t_0}^{\infty}e^{-(r+\delta)s}P_{1,0}\left[\overbar{\mathbf{N}}(t_0;s)\right]\, {\rm d}s&\text{(by Prop. \ref{prop_semigroup}~-~\ref{item:5})}\\
  &=e^{(r+\delta)t_0}\int_{0}^{\infty}e^{-(r+\delta)t_0}e^{-(r+\delta)s}P_{1,0}\left[\overbar{\mathbf{N}}(0;s)\right]\, {\rm d}s&\text{(by (\ref{semigroup_2}))}\\
  &= \int_0^{\infty}e^{-(r+\delta)s}P_{1,0}\left[S(s)\overbar{\mathbf{m}}\right]\,{\rm d}s&\text{(by (\ref{semigroup_1}))}\\
  &=P_{1,0}\left[R(r+\delta,A_0)\overbar{\mathbf{m}}\right]&\text{(by Lemma \ref{lem_laplace})}\\
  &=\frac{1}{K_1(r+\delta)}\left[y(t_0)+ \int_{-d}^0\int_{-d}^s e^{-(r+\delta)(s-\tau)}\phi(\tau)\,{\rm d}\tau \ y(t_0+s)\,{\rm d}s\right]&\\
  &\phantom{=}-\frac{\epsilon}{K(r+\delta)}\left[e(t_0)+ \int_{-d}^0\int_{-d}^s e^{-(r+\delta)(s-\tau)}\phi(\tau)\,{\rm d}\tau\  e(t_0+s)\,{\rm d}s\right]&\text{(by Lemma \ref{lemma_resolvent}).}
\end{align*}
\end{proof}
Defining now
\begin{gather}
  K_1:=K_1(r+\delta)= r+\delta- (\epsilon+\my-\sigma_y\cdob \kappa)-\int_{-d}^0 e^{(r+\delta) s}\phi(s)\, {\rm d}s,  \ g_\infty=\frac{1}{K_1},
  \nonumber
  \\
  \label{defgiinfty}
  K_2:=K_2(r+\delta)=K_1(r+\delta)+(\epsilon-\sigma_y\cdob\kappa),\ i_{\infty}=\frac{\epsilon}{K_2},
\end{gather}
\begin{equation}\label{defhinfty}
G(s):=\int_{-d}^s e^{-(r+\delta)(s-\tau)}\phi(\tau){\rm d}\tau \text{ and } \ h_\infty(s):=g_\infty G(s)
\end{equation}
we finally obtain that, if $r+\delta>\lambda_0$,
\begin{align}
\label{HC}
HC(t_0)
&= g_\infty\left[y(t_0)+ \int_{-d}^0 G(s)y(t_0+s)\, {\rm d}s\right]
          - g_\infty i_\infty\left[ e(t_0)+\int_{-d}^0 G(s)e(t_0+s)\, {\rm d}s\right]\\
  &=\langle (g_\infty,h_\infty),(y(t_0),y(t_0+\cdot))\rangle_{\calm_2}-i_\infty\langle (g_\infty,h_\infty),(e(t_0),e(t_0+\cdot))\rangle_{\calm_2}\ .
\end{align}

% \begin{remark}
% The second term in \eqref{HC} keeps track of the mean-reverting term in the equation.
% \end{remark}
The above representation allows to rewrite the constraint (\ref{NO_BORROWING_WITHOUT_REPAYMENT_CONDITIONLA_MEAN}) as
\begin{equation}
  \label{constraint_rewritten}
  W(t)+\langle (g_\infty,h_\infty),(y(t),y(t+\cdot))\rangle_{\calm_2}-i_\infty\langle (g_\infty,h_\infty),(e(t),e(t+\cdot))\rangle_{\calm_2}\geq 0,
\end{equation}
which in turn suggests to set
\begin{equation*}
  \overbar{l}_\infty=(g_\infty,h_\infty)\in\calm_2,\quad \overbar{\mathbf{l}}_\infty=\xphi{\overbar{l}_\infty}{-i_\infty\overbar{l}_\infty}\in\calm_2^2
\end{equation*}
and to define the function $\R\times\calm_2^2\to\R$
\begin{align}
  \label{gammainfinity}\Gamma_\infty(w,\overbar{\mathbf{x}}):&=w+\langle \overbar{\mathbf{l}}_\infty,\overbar{\mathbf{x}}\rangle_{\calm_2^2}\\
\nonumber                                                             &=w+\langle \overbar{l}_\infty,\overbar{x}^{(1)}\rangle_{\calm_2}-i_\infty\langle \overbar{l}_\infty,\overbar{x}^{(2)}\rangle_{\calm_2}
\\
\nonumber &=w+g_\infty x^{(1)}_0+\langle h_\infty,x^{(1)}_1\rangle-i_\infty \left[g_\infty x^{(2)}_0+\langle h_\infty,x^{(2)}_1\rangle\right]
\end{align}
for $w\in\R$, $\overbar{\mathbf{x}}\in\calm^2$, and to consider the sets
\begin{equation}
  \label{constraint_sets}
  \begin{gathered}
  \calh:=\R\times\calm_2^2\ ,\\
  \calh_+=\left\{(w,\overbar{\mathbf{x}})\in\calh\colon \Gamma_\infty(w,\overbar{\mathbf{x}})\geq 0\right\}\ ,\\
  \calh_{++}=\left\{(w,\overbar{\mathbf{x}})\in\calh\colon\Gamma_\infty(w,\overbar{\mathbf{x}})>0\right\}\ .
\end{gathered}
\end{equation}
$\calh$ is naturally a Hilbert space when endowed with the inner product
\begin{equation*}
  \left\langle \left(a,\overbar{\mathbf{x}}\right),\left(b,\overbar{\mathbf{y}}\right)\right\rangle_\calh=\left\langle \left(a,\overbar{x}^{(1)},\overbar{x}^{(2)}\right),\left(b,\overbar{y}^{(1)},\overbar{y}^{(2)}\right)\right\rangle_{\calh}:=ab+\left\langle \overbar{x}^{(1)},\overbar{y}^{(1)}\right\rangle_{\calm_2}+\left\langle \overbar{x}^{(2)},\overbar{y}^{(2)}\right\rangle_{\calm_2}\ .
\end{equation*}

\subsection{Why Assumption \ref{Hyp_K}}
\label{subsec:comment}
The requirement
\begin{equation*}
  r+\delta>\lambda_0
\end{equation*}
in Theorem \ref{thm_HC} is difficult to check in practice, as it requires an explicit computation of $\lambda_0$. Therefore we look for some sufficient condition, possibly easier to check, for such requirement to be satisfied. Set for $\lambda\in\C$
\begin{equation*}
\widetilde{K}_1(\lambda):= \lambda- (\epsilon+\my-\sigma_y\cdob \kappa)-\int_{-d}^0 e^{\lambda s}\vert\phi(s)\vert\, {\rm d}s,
\end{equation*}
\begin{equation*}
\widetilde{K}_2(\lambda):= \lambda-\my- \int_{-d}^0 e^{\lambda s}\vert\phi(s)\vert\, {\rm d}s=\widetilde{K}_1(\lambda)-(\epsilon-\sigma_y\cdob\kappa)\ ,
\end{equation*}
\begin{equation*}
\widetilde{K}(\lambda):= \widetilde{K}_1(\lambda) \widetilde{K}_2(\lambda)\ .
\end{equation*}
Finally set
\begin{equation*}
  \tilde\lambda_0=\sup\left\{\Re(\lambda)\colon\widetilde{K}(\lambda)=0\right\}\ .
\end{equation*}
Note that $\tilde\lambda_0$ is the spectral radius of the operator $\widetilde{A}_0:\mathcal{D}(\widetilde{A}_0) \subset \mathcal{M}_2^2 \rightarrow \mathcal{M}_2^2$,
\begin{equation*}
  \begin{gathered}
\mathcal{D}(\widetilde{A}_0):= \left\{ \left(\mathbf{x_0},\mathbf{x}_1\right) \in \mathcal{M}^2_2: \mathbf{x_1} \in W^{1,2}(-d,0;\mathbb{R}^2), \ \mathbf{x_1}(0)=\mathbf{x_0}\right\},\\
\widetilde{A}_0\left(\mathbf{x_0},\mathbf{x_1}\right):= \left(C_0\mathbf{x_0}+ \int_{-d}^0 \vert\phi(s)\vert\mathbf{x_1}(s)\,{\rm d}s,\frac{{\rm d}}{{\rm d}s}\mathbf{x_1}\right)\ .
  \end{gathered}
\end{equation*}

\begin{lemma}
  \label{lemma2}
  The functions $\widetilde{K}_1$ and $\widetilde{K}_2$, restricted to the real numbers, are strictly increasing.\\
Moreover the function $\widetilde{K}$ restricted to the real numbers is continuous and such that:
\begin{enumerate}[label=$(\roman{*})$]
  \item $\lim_{\xi\to\pm\infty}\widetilde{K}(\xi)=+\infty$;
\item the equation $\widetilde{K}(\xi)=0$ admits exactly two real solutions $\xi_1$ and $\xi_2$ with $\widetilde{K}_1(\xi_1)=0$ and $\widetilde{K}_2(\xi_2)=0$, and $\xi_1=\xi_2$ if and only if $\epsilon-\sigma_y\cdob\kappa=0$;
\item setting $\xi_0 :=\max(\xi_1, \xi_2)$ we have
\begin{equation*}
\xi_0=
 \begin{cases}
 \xi_2 \qquad  \text{if} \quad \epsilon-\sigma_y\cdob \kappa>0,
 \\
  \xi_1 \qquad  \text{if} \quad \epsilon-\sigma_y\cdob \kappa<0
   \end{cases}
\end{equation*}
and eventually it holds
$\tilde\lambda_0=\xi_0$.
\end{enumerate}
\end{lemma}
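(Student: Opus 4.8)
\textbf{Proof strategy for Lemma \ref{lemma2}.}

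The plan is to treat the three functions $\widetilde K_1$, $\widetilde K_2$, $\widetilde K$ restricted to $\R$ by elementary real analysis, exploiting the fact that $\xi\mapsto\int_{-d}^0e^{\xi s}|\phi(s)|\,{\rm d}s$ is a nonnegative, nondecreasing, convex function of the real variable $\xi$ (since each $e^{\xi s}$ is convex and nondecreasing in $\xi$ for $s\le 0$, and $|\phi|\ge 0$). First I would establish strict monotonicity of $\widetilde K_1$ and $\widetilde K_2$: each is $\xi$ minus a nondecreasing function, hence for $\xi<\xi'$ one has $\widetilde K_i(\xi')-\widetilde K_i(\xi)=(\xi'-\xi)-\int_{-d}^0(e^{\xi's}-e^{\xi s})|\phi(s)|\,{\rm d}s\ge(\xi'-\xi)-0$? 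That bound has the wrong sign, so I would instead argue directly: $e^{\xi's}-e^{\xi s}\le 0$ for $s\le 0$ when $\xi'>\xi$, so $-\int_{-d}^0(e^{\xi's}-e^{\xi s})|\phi(s)|\,{\rm d}s\ge 0$, whence $\widetilde K_i(\xi')-\widetilde K_i(\xi)\ge \xi'-\xi>0$. This gives strict monotonicity at once, and in fact shows each $\widetilde K_i$ is (weakly) $1$-Lipschitz-from-below in the sense just used; continuity of $\widetilde K_i$ (hence of $\widetilde K=\widetilde K_1\widetilde K_2$) follows from dominated convergence applied to the integral term on compact $\xi$-intervals.

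Next I would pin down the limits. As $\xi\to+\infty$, $e^{\xi s}\to 0$ for every $s\in[-d,0)$ and the integrand is dominated by $|\phi|$, so the integral tends to $0$ and $\widetilde K_i(\xi)\sim\xi\to+\infty$; as $\xi\to-\infty$, $e^{\xi s}\to+\infty$ for $s<0$, so by monotone convergence $\int_{-d}^0e^{\xi s}|\phi(s)|\,{\rm d}s\to+\infty$ (unless $\phi\equiv 0$ a.e., a degenerate case in which $\widetilde K_i(\xi)=\xi-c_i$ and everything below is trivial — I would note this explicitly or simply observe the argument still goes through). Hence each $\widetilde K_i$ is continuous, strictly increasing, with range all of $\R$, so each has a unique real zero $\xi_i$. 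For the product $\widetilde K$: on $(-\infty,\min(\xi_1,\xi_2))$ both factors are negative so $\widetilde K>0$; between $\xi_1$ and $\xi_2$ (if distinct) exactly one factor is negative so $\widetilde K<0$; on $(\max(\xi_1,\xi_2),+\infty)$ both are positive so $\widetilde K>0$. This yields item $(i)$ (the limits), the fact that $\widetilde K(\xi)=0$ has exactly the two real roots $\xi_1,\xi_2$, and — reading off the identity $\widetilde K_2=\widetilde K_1-(\epsilon-\sigma_y\cdob\kappa)$ evaluated at $\xi_1$, namely $\widetilde K_2(\xi_1)=-(\epsilon-\sigma_y\cdob\kappa)$, together with strict monotonicity of $\widetilde K_2$ — that $\xi_2>\xi_1\iff \epsilon-\sigma_y\cdob\kappa>0$ and $\xi_1=\xi_2\iff\epsilon-\sigma_y\cdob\kappa=0$, which is items $(ii)$ and the first half of $(iii)$; the case split for $\xi_0=\max(\xi_1,\xi_2)$ is then immediate.

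Finally, for the identification $\tilde\lambda_0=\xi_0$, I would invoke Lemma \ref{lemma_speck} applied to the operator $\widetilde A_0$ (whose spectrum is $\{\lambda\in\C\colon\widetilde K(\lambda)=0\}$, since $\widetilde A_0$ has exactly the same structure as $A_0$ with $\phi$ replaced by $|\phi|$): I need that no complex root of $\widetilde K$ has real part exceeding $\xi_0$. This is the one genuinely non-elementary point, and I expect it to be the main obstacle. The argument is the standard one for characteristic equations of delay type: if $\lambda=a+ib$ with $\widetilde K_1(\lambda)=0$ then $a=(\epsilon+\my-\sigma_y\cdob\kappa)+\Re\int_{-d}^0 e^{\lambda s}|\phi(s)|\,{\rm d}s\le(\epsilon+\my-\sigma_y\cdob\kappa)+\int_{-d}^0 e^{as}|\phi(s)|\,{\rm d}s=\widetilde K_1(a)+a$, since $|\Re\int e^{\lambda s}|\phi|\,{\rm d}s|\le\int e^{as}|\phi|\,{\rm d}s$; hence $\widetilde K_1(a)\ge 0$, and by monotonicity $a\le\xi_1$. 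The identical estimate with $\widetilde K_2$ gives $a\le\xi_2$ for roots of $\widetilde K_2$. Therefore every root of $\widetilde K=\widetilde K_1\widetilde K_2$ has real part at most $\xi_0=\max(\xi_1,\xi_2)$, and since $\xi_0$ is itself a (real) root, $\tilde\lambda_0=\sup\{\Re\lambda\colon\widetilde K(\lambda)=0\}=\xi_0$. (That $\tilde\lambda_0$ equals the spectral radius of $\widetilde A_0$ is just the statement of the analogue of Lemma \ref{lemma_speck} for $\widetilde A_0$, which I would state as such.) $\qed$
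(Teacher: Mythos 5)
Your proposal is correct and follows essentially the same route as the paper: elementary continuity/strict-monotonicity/limit arguments give the unique real zeros $\xi_1,\xi_2$ of $\widetilde K_1,\widetilde K_2$ and their ordering via the identity linking $\widetilde K_1$ and $\widetilde K_2$, and the real-part estimate $\int_{-d}^0 e^{as}\cos(bs)\vert\phi(s)\vert\,{\rm d}s\le\int_{-d}^0 e^{as}\vert\phi(s)\vert\,{\rm d}s$ shows every complex zero $a+ib$ of $\widetilde K_j$ satisfies $a\le\xi_j$, which combined with $\xi_0\le\tilde\lambda_0$ gives $\tilde\lambda_0=\xi_0$, exactly as in the paper. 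Just fix the bookkeeping in that last step: taking real parts gives $(\epsilon+\my-\sigma_y\cdob\kappa)+\int_{-d}^0 e^{as}\vert\phi(s)\vert\,{\rm d}s=a-\widetilde K_1(a)$ (not $\widetilde K_1(a)+a$), so the estimate yields $\widetilde K_1(a)\le 0$ rather than $\ge 0$, and it is this, by strict monotonicity, that gives $a\le\xi_1$ — the conclusion you state is the correct one and matches the paper's, but as written your intermediate chain has two sign slips that happen to cancel.
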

\begin{proof}
%It is immediate to check that the solution to the equation $K(\xi)=0$ are $\xi_1$, $\xi_2$ such  that $K_1(\xi_1)=0$ and $K_2(\xi_2)=0$.
%\begin{enumerate}[label=$(roman{*})$]
By definition $\widetilde{K}(\xi)=0$ if either $\widetilde{K}_1(\xi)=0$ or $\widetilde{K}_2(\xi)=0$. It is immediate to check that both $\widetilde{K}_1$ and $\widetilde{K}_2$ are continuous strictly increasing functions on $\R$ with $\lim_{\xi\to\pm\infty}\widetilde{K}_j(\xi)=\pm\infty$, $j=1,2$. Therefore $\widetilde{K}$ is continuous on $\R$ and there exists exactly one value $\xi_1\in\R$ such that $\widetilde{K}_1(\xi_1)=0$ and exactly one value $\xi_2\in\R$ such that $\widetilde{K}_2(\xi_2)=0$. Since $\widetilde{K}_2(\xi)=\widetilde{K}_1(\xi)-(\epsilon-\sigma_y\cdob\kappa)$, $\xi_1=\xi_2$ if and only if $\epsilon-\sigma_y\cdob\kappa=0$ and
\begin{equation*}
\max(\xi_1, \xi_2)=
 \begin{cases}
 \xi_2 \qquad  \text{if} \quad \epsilon-\sigma_y\cdob \kappa>0,
 \\
  \xi_1 \qquad  \text{if} \quad \epsilon-\sigma_y\cdob \kappa<0.
   \end{cases}
\end{equation*}
Let us now prove that $\tilde\lambda_0=\xi_0$. By definition we have $\xi_0 \le \tilde\lambda_0$. In order to prove that $\xi_0\ge\tilde\lambda_0$, let us consider an arbitrary $\lambda =a + i b \in\mathbb{C}$ such that $\widetilde{K}(\lambda)=0$. Suppose first that $\widetilde{K}_1(\lambda)=0$, so that in particular
\begin{align*}
  0&=\Re\left(\widetilde{K}_1(\lambda)\right)=a-(\epsilon+\my-\sigma_y\cdob\kappa)-\int_{-d}^0e^{as}\cos(bs)\vert\phi(s)\vert{\rm d}s\\
   &\geq a-(\epsilon+\my-\sigma_y\cdob\kappa)-\int_{-d}^0e^{as}\vert\phi(s)\vert{\rm d}s =\widetilde{K}_1(a)\ .
\end{align*}
Therefore $a\leq\xi_1\leq\xi_0$. If instead $\widetilde{K}_2(\lambda)=0$ then
\begin{equation*}
  0=\Re\left(\widetilde{K}_2(\lambda)\right)=a-\my-\int_{-d}^0e^{as}\cos(bs)\vert\phi(s)\vert{\rm d}s\geq a-\my-\int_{-d}^0e^{as}\vert\phi(s)\vert{\rm d}s =\widetilde{K}_2(a)\
\end{equation*}
hence $a\leq\xi_2\leq\xi_0$. In both cases taking the supremum in the definition of $\tilde\lambda_0$ yields $\tilde\lambda_0\leq\xi_0$.
\end{proof}
The convenience of introducing $\tilde\lambda_0$ is clarified by its relation with $\lambda_0$.
\begin{lemma}
  We have
  \begin{equation*}
    \lambda_0\leq\tilde\lambda_0\ .
  \end{equation*}
\end{lemma}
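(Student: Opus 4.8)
The plan is to mimic, essentially verbatim, the estimate already carried out in the proof of Lemma~\ref{lemma2}(iii). I would take an arbitrary $\lambda = a + ib \in \C$ lying in the spectrum of $A_0$, i.e.\ with $K(\lambda) = 0$, show that its real part $a$ does not exceed $\tilde\lambda_0$, and then pass to the supremum over the spectrum. Since $K(\lambda) = K_1(\lambda)K_2(\lambda)$, either $K_1(\lambda) = 0$ or $K_2(\lambda) = 0$, and the two cases are symmetric.

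In the case $K_1(\lambda) = 0$, taking real parts and using $\Re\!\left(e^{\lambda s}\right) = e^{as}\cos(bs)$ gives $a = (\epsilon + \my - \sigma_y\cdob\kappa) + \int_{-d}^0 e^{as}\cos(bs)\phi(s)\,{\rm d}s$. Because $e^{as} > 0$ and $e^{as}\cos(bs)\phi(s) \le e^{as}|\phi(s)|$ pointwise on $[-d,0]$, the integral is bounded above by $\int_{-d}^0 e^{as}|\phi(s)|\,{\rm d}s$, so $\widetilde{K}_1(a) = a - (\epsilon + \my - \sigma_y\cdob\kappa) - \int_{-d}^0 e^{as}|\phi(s)|\,{\rm d}s \le 0$. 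By Lemma~\ref{lemma2}, $\widetilde{K}_1$ is strictly increasing on $\R$ with unique real zero $\xi_1$, hence $a \le \xi_1 \le \xi_0 = \tilde\lambda_0$. If instead $K_2(\lambda) = 0$, the identical computation with $\epsilon + \my - \sigma_y\cdob\kappa$ replaced by $\my$ yields $\widetilde{K}_2(a) \le 0$, whence $a \le \xi_2 \le \xi_0 = \tilde\lambda_0$.

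In both cases $\Re\lambda = a \le \tilde\lambda_0$; taking the supremum over all $\lambda$ with $K(\lambda) = 0$ then gives $\lambda_0 \le \tilde\lambda_0$ (the inequality being trivial if the spectrum of $A_0$ happens to be empty, with the convention $\sup\emptyset = -\infty$). There is no real obstacle here: the only point worth isolating is that passing to real parts together with the positivity of $e^{as}$ replaces the oscillatory kernel $e^{\lambda s}\phi(s)$ by its dominating counterpart $e^{as}|\phi(s)|$, after which the monotonicity of $\widetilde{K}_1, \widetilde{K}_2$ and the identification $\tilde\lambda_0 = \xi_0$ established in Lemma~\ref{lemma2} close the argument.
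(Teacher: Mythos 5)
Your proof is correct and takes essentially the same route as the paper: take real parts of $K_j(\lambda)=0$, dominate the oscillatory integral $\int_{-d}^0 e^{as}\cos(bs)\phi(s)\,{\rm d}s$ by $\int_{-d}^0 e^{as}\vert\phi(s)\vert\,{\rm d}s$, and conclude from the strict monotonicity of $\widetilde K_1,\widetilde K_2$ and the identification $\tilde\lambda_0=\xi_0$ of Lemma \ref{lemma2}. The only difference is organizational and harmless: you split according to which factor $K_j$ vanishes and show $\widetilde K_j(a)\le 0$ directly, whereas the paper splits according to whether $\xi_0=\xi_1$ or $\xi_0=\xi_2$ and bounds the single function $\widetilde K_2$ or $\widetilde K_1$ in each case using the sign of $\epsilon-\sigma_y\cdob\kappa$; your variant even avoids that sign discussion.
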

\begin{proof}
  For every $\lambda=a+ib\in\C$ we have
  \begin{equation}
    \label{eq:kappa1}
    \Re\left(K_1(\lambda)\right)=a-(\epsilon+\my-\sigma_y\cdob\kappa)-\int_{-d}^0e^{as}\cos(bs)\phi(s){\rm d} s\ ,
  \end{equation}
  \begin{equation}
    \label{eq:kappa2}
    \Re\left(K_2(\lambda)\right)=a-\my-\int_{-d}^0e^{as}\cos(bs)\phi(s){\rm d} s\ .
  \end{equation}
Suppose first that $\xi_0=\xi_1$. Then $\epsilon-\sigma_y\cdob\kappa<0$ and $\widetilde{K}_1(\xi)<\widetilde{K}_2(\xi)$ for every $\xi\in\R$. Therefore, recalling the definition of $\lambda_0$, it is enough to show that for every number $\lambda=a+i b$ such that $K(\lambda)=0$ we have $\widetilde{K}_2(a)\leq 0$. So let $\lambda$ be such a number; we have
\begin{align*}
  \widetilde{K}_2(a)&=a-\my-\int_{-d}^0e^{as}\vert\phi(s)\vert{\rm d}s\\
                    &=a-\my-\int_{-d}^0e^{as}\cos(bs)\phi(s){\rm d}s-(\epsilon-\sigma_t\cdob\kappa)+(\epsilon-\sigma_y\cdob\kappa)\\
                    &\phantom{a-\my-\int_{-d}^0e^{as}\cos(bs)\phi(s){\rm d}s-}+\int_{-d}^0e^{as}\left(\cos(bs)\phi(s)-\vert\phi(s)\vert\right){\rm d}s\\
                    &\leq \int_{-d}^0e^{as}\left(\cos(bs)\phi(s)-\vert\phi(s)\vert\right){\rm d}s\\
                    &\leq 0\ ,
\end{align*}
where to deduce the second to last inequality one uses (\ref{eq:kappa1}) together with the fact that $\epsilon-\sigma_y\cdob\kappa<0$ if $K_1(\lambda)=0$ and (\ref{eq:kappa2}) if $K_2(\lambda)=0$.

Similarly if $\xi_0=\xi_2$ we have $\epsilon-\sigma_y\cdob\kappa\geq 0$ and is suffices to show that for any $\lambda=a+i b$ as above $\widetilde{K}_1(a)\leq 0$. In this case we find
\begin{align*}
  \widetilde{K}_1(a)&=a-\my-(\epsilon-\sigma_y\cdob\kappa)-\int_{-d}^0e^{as}\vert\phi(s)\vert{\rm d}s\\
                   &=a-\my-(\epsilon-\sigma_y\cdob\kappa)-\int_{-d}^0e^{as}\cos(bs)\phi(s){\rm d}s+\int_{-d}^0e^{as}\left(\cos(bs)\phi(s)-\vert\phi(s)\vert\right){\rm d}s\\
                   &\leq \int_{-d}^0e^{as}\left(\cos(bs)\phi(s)-\vert\phi(s)\vert\right){\rm d}s\\
                   &\leq 0
\end{align*}
and the conclusion follows as before.
\end{proof}

Thanks to this last result it becomes clearer why we make Assumption \ref{Hyp_K}. If $r$ and $\delta$ are such that Assumption \ref{Hyp_K} is satisfied, the in particular both $\widetilde{K}_1(r+\delta)$ and $\widetilde{K}_2(r+\delta)$ are positive and we have that
\begin{equation*}
  r+\delta>\xi_0=\tilde\lambda_0\geq\lambda_0;
\end{equation*}
the assumption of Theorem \ref{thm_HC} is therefore satisfied, so that the constraint (\ref{NO_BORROWING_WITHOUT_REPAYMENT_CONDITIONLA_MEAN}) takes the convenient formulation (\ref{constraint_rewritten}). Note that Assumption \ref{Hyp_K} is not equivalent to saying that $r+\delta>\lambda_0$ but only a sufficient condition. However it is usually much easier to verify the inequalities in Assumption \ref{Hyp_K} than computing explicitly $\lambda_0$.
\begin{remark}
  Repeating the arguments above one can easily show that if there is no mean-reverting effect (i.e. $\epsilon=0$) and $\phi$ is positive almost everywhere then $r+\delta>\lambda_0$ if and only if $K_1(r+\delta)>0$.
\end{remark}

\section{The general problem and the associated HJB equation}
\label{SEC:HJB}

\subsection{The infinite dimensional general  problem}
\label{sub:pbinfdim}

Here we rewrite, in a suitable infinite-dimensional setting,
the problem of maximizing the functional \eqref{OBJECTIVE_FUNCTION}
under the constraint \eqref{NO_BORROWING_WITHOUT_REPAYMENT_CONDITIONLA_MEAN}
and under the transformed state equation \eqref{DYNAMICS_WEALTH_LABOR_INCOMEe}.
We call it 'general  problem' since we consider it for generic initial data, so, not necessarily connected with our original control of McKean-Vlasov dynamics. In Section \ref{sec:back} we will go back to the original problem.

For any two Banach spaces $E$ and $E^\prime$ we will denote by $L(E;E^\prime)$ the space of bounded linear operators from $E$ to $E^\prime$.\\
We proceed similarly to the previous section to reformulate system (\ref{DYNAMICS_WEALTH_LABOR_INCOMEe}) in the infinite-dimensional space $\calm_2^2$. To begin with, we  define the finite-dimensional operator $C$ on $\R^2$ as
\begin{equation*}
C:=\left(
\begin{array}{cc}
\epsilon+\my& -\epsilon\\
0& \my
\end{array}
\right)
\end{equation*}
and the operator $A:\mathcal{D}(A) \subset \mathcal{M}_2^2 \rightarrow \mathcal{M}_2^2$ as
\begin{equation}
  \begin{gathered}
  \label{defA}
\mathcal{D}(A):= \left\{ \left(\mathbf{x_0},\mathbf{x_1}\right) \in \mathcal{M}^2_2: \mathbf{x_1} \in W^{1,2}(-d,0;\mathbb{R}^2), \ \mathbf{x_1}(0)=\mathbf{x_0}\right\},\\
A\left(\mathbf{x_0},\mathbf{x_1}\right):= \left(C\mathbf{x_0}+ \int_{-d}^0 \phi(s)\mathbf{x_1}(s)\,{\rm d}s,\frac{{\rm d}}{{\rm d}s}\mathbf{x_1}\right)
  \end{gathered}
\end{equation}
(the only difference between $A$ and $A_0$ defined in (\ref{defA0}) lies in the use of $C$ in place of $C_0$). By Proposition \ref{prop_semigroup}~-~\ref{item:1} the operator $A$ is the infinitesimal generator of a strongly continuous semigroup $T(t)$ in $\calm_2^2$.\\
Furthermore, we need to define the linear bounded operator $F\colon\calm_2^2\to L(\R^n;\calm_2^2)$ in the following way: for every $\overbar{\mathbf{x}}=\left(\xphi{x^{(1)}_0}{x^{(2)}_0},\xphi{x^{(1)}_1}{x^{(2)}_1}\right)\in\calm_2^2$, $F(\overbar{\mathbf{x}})$ is the linear map
\begin{equation}
  \label{defF}
  z\mapsto \left(P_{1,0}\overbar{\mathbf{x}}\right)\left(\begin{pmatrix}\sigma_y\cdob z\\0\end{pmatrix},\begin{pmatrix}0_{L^2}\\0_{L^2}\end{pmatrix}\right)=\left(\begin{pmatrix}x_0^{(1)}\sigma_y\cdob z\\0\end{pmatrix},\begin{pmatrix}0_{L^2}\\0_{L^2}\end{pmatrix}\right)\ ,
\end{equation}
where $0_{L^2}$ denotes the null function in $L^2(-d,0;\R)$.\\
Consider the equation in $\calm_2^2$
\begin{equation}
\label{Y_condensed}
  \begin{cases}
    {\rm d}\overbar{\mathbf{Y}}(t)=A\overbar{\mathbf{Y}}(t){\rm d}t+F\left(\overbar{\mathbf{Y}}(t)\right){\rm d} Z(t)\ ,\quad t\in[0,+\infty)\ ,\\
    \overbar{\mathbf{Y}}(0)=\overbar{\mathbf{x}}\ .
  \end{cases}
\end{equation}
Since only the first finite-dimensional component of $F\left(\overbar{\mathbf{Y}}(t)\right)$ is nonzero, the stochastic integral abowe is well-defined without the need to recurr to any infinite-dimensional stochastic integration theory. As paths of $\overbar{\mathbf{Y}}$ have almost surely the regularity of Brownian paths, $\overbar{\mathbf{Y}}(t)$ is almost surely not in $\cald(A)$ for every $t$; therefore as a solution of the above equation we mean a \emph{mild} solution, that is, almost surely $\overbar{\mathbf{Y}}(t)$ should satisfy
\begin{equation}
  \label{sol_semig}
  \overbar{\mathbf{Y}}(t)=T(t)\overbar{\mathbf{x}}+\int_0^tT(t-s)F\left(\overbar{\mathbf{Y}}(s)\right){\rm d} Z(s)
\end{equation}
for every $t\in[0,+\infty)$.

\begin{proposition}
\label{equiv_stoch}
  For every initial condition $\overbar{\mathbf{x}}$ equation (\ref{Y_condensed}) has a unique mild solution, that is also a weak solution, given by (\ref{sol_semig}), with continuous trajectories almost surely. When
  \begin{equation*}
    \overbar{\mathbf{x}}=\xphi{\overbar{x}}{\overbar{x}}=\left(\xphi{x_0}{x_0},\xphi{x_1}{x_1}\right)
  \end{equation*}
and $x_0$, $x_1$ are as in (\ref{DYNAMICS_WEALTH_LABOR_INCOME}), equation (\ref{Y_condensed}) and the equation for $y$ in (\ref{DYNAMICS_WEALTH_LABOR_INCOME}) are equivalent, meaning that if $y$ solves the second equation in (\ref{DYNAMICS_WEALTH_LABOR_INCOME}) then
\begin{equation}
  \label{eq:Yy}
  \overbar{\mathbf{Y}}(t)=\left(\begin{pmatrix}y(t)\\\mathbb{E}[y(t)]\end{pmatrix},\begin{pmatrix}\left\{y(t+s)\right\}_{s\in[-d,0]}\\\left\{\mathbb{E}[y(t+s)]\right\}_{s\in[-d,0]}\end{pmatrix}\right)\ ;
\end{equation}
is a mild solution of (\ref{Y_condensed}), and conversely if $\overbar{\mathbf{Y}}$ is a solution of (\ref{Y_condensed}) then $P_{1,0}\overbar{\mathbf{Y}}$ is a solution of the second equation in (\ref{DYNAMICS_WEALTH_LABOR_INCOME}). In particular there exists a unique (probabilistically) strong solution $y$ of the second equation in (\ref{DYNAMICS_WEALTH_LABOR_INCOME}) with initial condition $\overbar{x}=(x_0,x_1)$.
\end{proposition}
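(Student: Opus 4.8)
The plan is to split the statement into three parts: abstract well-posedness of (\ref{Y_condensed}) for arbitrary initial data; the implication ``delay solution $\Rightarrow$ abstract solution'' (the forward half of the equivalence); and the converse together with the ``in particular'' clause, the two halves being bridged by the classical identification of mild and weak solutions. \textbf{Step 1.} By Proposition \ref{prop_semigroup}~-~\ref{item:1}, applied to $A$ — which differs from $A_0$ of (\ref{defA0}) only by the bounded finite-dimensional term $(\mathbf{x_0},\mathbf{x_1})\mapsto((C-C_0)\mathbf{x_0},0)$, so the statement carries over by bounded perturbation — $A$ generates a strongly continuous semigroup $T(t)$ on $\calm_2^2$. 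Since by (\ref{defF}) $F$ is linear and bounded, it is globally Lipschitz with linear growth, and the classical Banach fixed-point scheme for semilinear SPDEs with Lipschitz coefficients applies: the map $\overbar{\mathbf{Y}}\mapsto T(\cdot)\overbar{\mathbf{x}}+\int_0^{\cdot}T(\cdot-s)F(\overbar{\mathbf{Y}}(s))\,{\rm d}Z(s)$ is a contraction on the $\mathbb F$-adapted continuous $\calm_2^2$-valued processes with $\sup_{[0,T]}\E\|\cdot\|_{\calm_2^2}^2<\infty$ for $T$ small (It\^o isometry and boundedness of $T(\cdot)$ on compacts), and patching over $[0,+\infty)$ yields a unique mild solution, given by (\ref{sol_semig}). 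Almost-sure path continuity follows from the factorization method — or, more directly here, from $F(\overbar{\mathbf{Y}}(s))\,{\rm d}Z(s)=(P_{1,0}\overbar{\mathbf{Y}}(s))\bigl((\sigma_y\cdob{\rm d}Z(s),0),(0_{L^2},0_{L^2})\bigr)$, so that the stochastic convolution is $\int_0^t T(t-s)\mathbf{v}\,{\rm d}N(s)$ with $\mathbf{v}:=\bigl((1,0),(0_{L^2},0_{L^2})\bigr)$ fixed and $N$ a real continuous martingale. Finally, this mild solution is also an (analytically) weak solution by the standard equivalence for Lipschitz semilinear equations (e.g.\ \cite{DAPRATO_ZABCZYK_RED_BOOK}).

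\textbf{Step 2.} Let $y$ be a strong solution of the $y$-equation in (\ref{DYNAMICS_WEALTH_LABOR_INCOME}) with the given deterministic data $(x_0,x_1)$. Taking expectations the $\epsilon$-terms cancel, so $e(t):=\E[y(t)]$ solves the deterministic DDE (\ref{eq:efirst}) with data $(x_0,x_1)$; hence, by Remark \ref{rm:changevar}, $(y,e)$ solves (\ref{eq:yefirst})--(\ref{eq:efirst}), and I would set $\overbar{\mathbf{Y}}(t)$ as in (\ref{eq:Yy}) and verify it is a weak solution of (\ref{Y_condensed}). First, integration by parts in $s$ gives $\cald(A^*)=\{(\mathbf{p_0},\mathbf{p_1})\in\calm_2^2:\mathbf{p_1}\in W^{1,2}(-d,0;\R^2),\ \mathbf{p_1}(-d)=0\}$ and $A^*(\mathbf{p_0},\mathbf{p_1})=(C^\top\mathbf{p_0}+\mathbf{p_1}(0),\,\phi(\cdot)\mathbf{p_0}-\mathbf{p_1}'(\cdot))$, the condition $\mathbf{p_1}(-d)=0$ being exactly what annihilates the boundary term. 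Then, for $\zeta=(\mathbf{p_0},\mathbf{p_1})\in\cald(A^*)$ and $\mathbf{n}(t):=\xphi{y(t)}{e(t)}$, I would differentiate $\langle\overbar{\mathbf{Y}}(t),\zeta\rangle_{\calm_2^2}=\mathbf{n}(t)\cdob\mathbf{p_0}+\int_{-d}^0\mathbf{n}(t+s)\cdob\mathbf{p_1}(s)\,{\rm d}s$ using It\^o's formula for the point part (via the SDDE for ${\rm d}y$ and the DDE for ${\rm d}e$) and the identity $\frac{{\rm d}}{{\rm d}t}\int_{-d}^0\mathbf{n}(t+s)\cdob\mathbf{p_1}(s)\,{\rm d}s=\mathbf{n}(t)\cdob\mathbf{p_1}(0)-\int_{-d}^0\mathbf{n}(t+s)\cdob\mathbf{p_1}'(s)\,{\rm d}s$ for the memory part (valid as $\mathbf{p_1}(-d)=0$, understood in integrated form since $y$ is only continuous). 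Collecting the finite-variation terms reproduces $\langle\overbar{\mathbf{Y}}(t),A^*\zeta\rangle$, while the martingale part equals $\langle\zeta,F(\overbar{\mathbf{Y}}(t))\,{\rm d}Z(t)\rangle$ since the noise in ${\rm d}y$ is $y(t)\sigma_y\cdob{\rm d}Z(t)$, acting on the first coordinate only. Hence $\overbar{\mathbf{Y}}$ is a weak — thus, by Step 1, the unique mild — solution of (\ref{Y_condensed}).

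\textbf{Step 3.} Existence and uniqueness of a strong solution $y$ of the $y$-equation in (\ref{DYNAMICS_WEALTH_LABOR_INCOME}), together with $\E[y]=e$, come from Remark \ref{rm:changevar} (insert the deterministic $e$ of (\ref{eq:efirst}) into the drift and use the classical theory of stochastic functional differential equations). By Step 2 its lift is the unique mild solution of (\ref{Y_condensed}); conversely, if $\overbar{\mathbf{Y}}$ is any solution of (\ref{Y_condensed}) with diagonal datum $\overbar{\mathbf{x}}=\xphi{\overbar x}{\overbar x}$, then by uniqueness (Step 1) it coincides with this lift, whence $P_{1,0}\overbar{\mathbf{Y}}=y$ solves the delay equation. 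Uniqueness of the strong solution follows too: two strong solutions share the same mean $e$ (uniqueness for (\ref{eq:efirst})), both lift to mild solutions of (\ref{Y_condensed}) by Step 2, and these agree by Step 1.

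I expect \textbf{Step 2} to be the crux: correctly identifying $\cald(A^*)$ and the boundary condition at $-d$, and handling the differentiation of $\langle\overbar{\mathbf{Y}}(t),\zeta\rangle$ along the delayed (memory) components — keeping track of the multiplicative noise, which enters the point component only, while respecting that $y$ is merely a.s.\ continuous rather than differentiable. Steps 1 and 3 amount to bookkeeping on top of standard SPDE results and of Remark \ref{rm:changevar}.
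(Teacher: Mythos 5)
Your argument is correct, but it follows a more self-contained route than the paper, whose proof of this proposition consists of two citations: existence, uniqueness and continuity of the mild solution together with the mild--weak equivalence are taken from \cite[Theorem 6.7]{DAPRATO_ZABCZYK_RED_BOOK}, and the equivalence between the abstract equation (\ref{Y_condensed}) and the delay SDE is taken from \cite[Theorem 3.1]{CHOJNOWSKA-MICHALIK_1978}. Your Step 1 essentially re-derives the first citation (fixed point for the Lipschitz, in fact linear, coefficient $F$ after noting that $A$ is a bounded perturbation of $A_0$), and your Step 2 re-proves the special case of Chojnowska-Michalik's representation theorem needed here, by identifying $\cald(A^\ast)$ and applying It\^o's formula to $\langle\overbar{\mathbf{Y}}(t),\zeta\rangle_{\calm_2^2}$ for $\zeta\in\cald(A^\ast)$; your computation of $A^\ast$ agrees with the paper's (\ref{Astar}), and your treatment of the memory term (using $\mathbf{p_1}(-d)=0$ to kill the boundary term, in integrated form because the segment is only $L^2$/continuous) is the right way to make that verification rigorous. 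What your route buys is transparency: it exhibits explicitly the adjoint-pairing identity that the paper in any case uses later (e.g.\ in (\ref{ito_l_infty})), at the cost of length; the paper's route is shorter but leaves the delay-to-abstract translation to the literature. Two small caveats: your ``more direct'' continuity argument writes the stochastic convolution as $\int_0^tT(t-s)\mathbf{v}\,{\rm d}N(s)$ with $\mathbf{v}=((1,0),(0_{L^2},0_{L^2}))\notin\cald(A)$, so continuity is not immediate from that representation alone -- but your fallback on the factorization method/standard Lipschitz theory (the very theorem the paper cites) covers it, and alternatively continuity follows from the equivalence itself since $t\mapsto(y(t),y(t+\cdot))$ is continuous into $\calm_2$ by continuity of translations in $L^2$; and taking expectations in Steps 2--3 tacitly uses that the stochastic integral is a true martingale (local square-integrability of $y$), which is the same implicit assumption made in Remark \ref{rm:changevar} of the paper, so it is not a gap relative to the paper's treatment.
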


\begin{proof}
  Existence and uniqueness of a continuous mild solution of (\ref{Y_condensed}) and the equivalence between mild and weak solutions follow from \cite[Theorem 6.7]{DAPRATO_ZABCZYK_RED_BOOK}. The equivalence property has been proven in \cite[Theorem 3.1]{CHOJNOWSKA-MICHALIK_1978}.
\end{proof}
\begin{remark}
  In the setting of the above proposition the map $[-d,0]\ni s\mapsto y(t+s)$ may fail to be continuous for small times, since the initial condition is only in $L^2$. However this cannot happen if the initial condition $x_1$ is continuous and $x_0=x_1(0)$. In this case everything can be formulated in spaces of continuous functions, see for example \cite[Section 5]{FLANDOLI_RUSSO_ZANCO} for a discussion.
\end{remark}

Using (\ref{Y_condensed}), for any given $(\theta,c,B)\in\Pi^0$ and $(w,\overbar{\mathbf{x}})\in\calh$ system (\ref{DYNAMICS_WEALTH_LABOR_INCOME}) can be formulated in the unknown $(W,\overbar{\mathbf{Y}})\in\calh=\R\times\calm_2^2$ as
\begin{equation}
  \label{systemH}
  \begin{cases}
    {\rm d} W(t)=\left[(r+\delta)W(t)+\theta\cdob(\mu-r\mathbf{1})-c(t)-\delta B(t)+P_{1,0}\left(\overbar{\mathbf{Y}}(t)\right)\right]{\rm d}t+\theta(t)\cdob\sigma{\rm d}Z(t)\ ,\\
    {\rm d} \overbar{\mathbf{Y}}(t)=A\overbar{\mathbf{Y}}(t){\rm d}t+F\left(\overbar{\mathbf{Y}}(t)\right){\rm d}Z(t)\ ,\\
    \left(W(0),\overbar{\mathbf{Y}}(0)\right)=\left(w,\overbar{\mathbf{x}}\right)\
  \end{cases}
\end{equation}
and by Proposition \ref{equiv_stoch} it  admits a unique strong solution $(W,y)$ corresponding to the unique solution $(W,\overbar{\mathbf{Y}})$ of the latter.\\
We will introduce in a moment a convenient formulation of our optimal control problem and the Hamilton-Jacobi-Bellman equation associated to it. To this end we condense further the notation defining two linear operators on $\calh\times\R^n\times\R_+\times\R_+$: $\calb$ is the unbounded operator with values in $\calh$ given by
\begin{equation*}
  \begin{gathered}
    D(\calb)=\left(\R\times D(A)\right)\times\R^n\times\R_+\times\R_+\ ,\\
    \calb\left(w,\overbar{\mathbf{x}},\theta,c,B\right)=\left((r+\delta)w+\theta\cdob(\mu-r\mathbf{1})-c-\delta B+P_{1,0}\overbar{\mathbf{x}},A\overbar{\mathbf{x}}\right)\ ,
  \end{gathered}
\end{equation*}
while $\cals$ is the bounded operator with values in $L(\R^n;\calh)$
\begin{equation*}
    \cals\left(w,\overbar{\mathbf{x}},\theta,c,B\right)=\left[z\mapsto \left(\sigma^\top\theta\cdob z,F(\overbar{\mathbf{x}})z\right)\right]\ .
\end{equation*}
For simplicity we let both of them depend also on the variables in $\calh\times\R^n\times\R_+\times\R_+$ that do not explicitly appear in their definition; in particular $\cals$ depends only on $x^{(1)}_0$ and $\theta$.\\
It is not difficult to check that for every fixed $\left(w,\overbar{\mathbf{x}},\theta,c,B\right)\in\calh\times\R^n\times\R_+\times\R_+$ the adjoint of $\cals\left(w,\overbar{\mathbf{x}},\theta,c,B\right)$ is the map $\cals\left(w,\overbar{\mathbf{x}},\theta,c,B\right)^\ast\in L(\calh;\R^n)$ given by
\begin{equation*}
  (u,\overbar{\mathbf{p}})\mapsto u\sigma^\top\theta+P_{1,0}(\overbar{\mathbf{x}})P_{1,0}(\overbar{\mathbf{p}})\sigma_y\ .
\end{equation*}
Set \begin{align*}
  \cali:=L^2(-d,0;\R)&\times \Big(L^2(-d,0;\R)\times L\left(L^2(-d,0;\R);L^2(-d,0;\R)\right)\Big)\times\\
  &\times \Big(L^2(-d,0;\R) \times L\left(L^2(-d,0;\R);L^2(-d,0;\R)\right)\Big)\ ;
\end{align*}
then $L(\calh;\calh)\cong\caln:=\calh\times(\calh\times\cali)\times(\calh\times\cali)$ and given an element
\begin{equation*}
Q=\left(H_0,\begin{matrix}(H_1,I_1)\\(H_2,I_2)\end{matrix}\right)\in\caln
\end{equation*}
we can index its entries as $Q_{11},Q_{12},\dots,Q_{21},\dots,Q_{51},\dots,Q_{55}$; here $Q_{11},\dots,Q_{15}$ are the elements of $H_0$, in the order given by the definition of the space $\calh$, and so on. Through this interpretation we can define the space of symmetric elements in $\caln$ as
\begin{equation*}
  \caln_{\text{sym}}:=\left\{Q\in\caln\colon Q_{ij}=Q_{ji},i,j=1,\dots,5\right\}\ .
\end{equation*}
By simple computations we then have, for any $Q\in\caln_{\text{sym}}$ and any $\left(w,\overbar{\mathbf{x}},\theta,c,B\right)\in\calh\times\R^n\times\R_+\times\R_+$,
\begin{multline}
  \label{eq:trace}
\mathrm{Tr}\left[Q\cals\left(w,\overbar{\mathbf{x}},\theta,c,B\right)\cals\left(w,\overbar{\mathbf{x}},\theta,c,B\right)^\ast\right]\\=Q_{11}\left\vert\sigma^\top\theta\right\vert^2+Q_{22}P_{1,0}(\overbar{\mathbf{x}})^2\left\vert\sigma_y\right\vert^2+2Q_{12}P_{1,0}(\overbar{\mathbf{x}})\sigma_y\cdob\sigma^\top\theta\ .
\end{multline}
In particular for any given function
\begin{equation*}
  f\colon\calh\to\R
\end{equation*}
its second Fr\'echet derivative at a given point $(w,\overbar{\mathbf{x}})$ is an element $\nabla^2f(w,\overbar{\mathbf{x}})\in\caln_{\text{sym}}$, and the above formula provides the second order term that will appear in our Hamilton-Jacobi-Bellman equation.

Recall the set $\Pi^0$ defined in (\ref{DEF_PI0_FIRST_DEFINITION}). We denote a triple of controls $\left(\theta(\cdot),c(\cdot),B(\cdot)\right)\in\R^n\times\R_+\times\R_+$ as $\pi(\cdot)$; for given $\pi(\cdot)\in\Pi^0$ and given initial condition $\left(w,\overbar{\mathbf{x}}\right)$ we eventually rewrite system (\ref{systemH}) for the unknown $\calh$-valued process $\calx=\left(W,\overbar{\mathbf{Y}}\right)$ as
\begin{equation}
  \label{systemH_condensed}
  \begin{cases}
    {\rm d}\calx(t)=\calb\left(\calx(t),\pi(t)\right){\rm d}t+\cals\left(\calx(t),\pi(t)\right){\rm d}Z(t)\ ,\\
    \calx(0)=\left(w,\overbar{\mathbf{x}}\right)\ ,
  \end{cases}
\end{equation}
where again solutions are to be intended in mild sense, since almost surely $\calx\notin D(\calb)$. By \cite[Chapter 5.6]{KARATZAS_SHREVE_91} and Proposition \ref{equiv_stoch} there is a unique mild solution of (\ref{systemH_condensed}); we will denote such solution at time $t\geq 0$ as $\calx^{w,\overbar{\mathbf{x}}}(t;\pi)=\left(W^{w,\overbar{\mathbf{x}}}(t;\pi),\overbar{\mathbf{Y}}^{\overbar{\mathbf{x}}}(t)\right)$; recall that we are interested only in initial conditions of the form
\begin{equation}
  \label{innt_cond_sec_4}
  (w,\overbar{\mathbf{x}})=\left(w,\xphi{\overbar{x}}{\overbar{x}}\right)=\left(w,\xphi{x_0}{x_0},\xphi{x_1}{x_1}\right)\ .
\end{equation}
The dependence on the initial condition will be sometimes hidden if notationally convenient.\\
Thanks to the results proved in Section \ref{subsec:formula_HC} we can write the set of admissible controls as
\begin{align*}
  \Pi\left(w,\overbar{\mathbf{x}}\right)&=\big\{\pi\in\Pi^0\colon \calx^{w,\overbar{\mathbf{x}}}(t;\pi)\in\calh_+\ \forall t\geq 0\big\}\\
                                        &=\big\{\pi\in\Pi^0\colon \Gamma_\infty\left(W^{w,\overbar{\mathbf{x}}}(t;\pi),\overbar{\mathbf{Y}}^{\overbar{\mathbf{x}}}(t)\right)\geq 0\ \forall t\geq 0\big\}\\
                                        &=\big\{\pi\in\Pi^0\colon W^{w,\overbar{\mathbf{x}}}(t)+\langle (g_\infty,h_\infty),(y^{\overbar{x}}(t),y^{\overbar{x}}(t+\cdot))\rangle_{\calm_2}+\\
                                        &\phantom{aaaaaaaaaaaaaa}-i_\infty\langle (g_\infty,h_\infty),(\E[y^{\overbar{x}}(t)],\E[y^{\overbar{x}}(t+\cdot)])\rangle_{\calm_2}\geq 0\ \forall t\geq 0\big\}\\
                                        &=\big\{\pi\in\Pi^0\colon W^{w,\overbar{\mathbf{x}}}(t)+g_\infty y^{\overbar{x}}(t)+\langle h_\infty,y^{\overbar{x}}(t+\cdot)\rangle+\\
                                        &\phantom{aaaaaaaaaaaaaa}-i_\infty \big[g_\infty \E[y^{\overbar{x}}(t)]+\langle h_\infty,\E[y^{\overbar{x}}(t+\cdot)]\rangle\big]\geq 0\ \forall t\geq0\big\}\ .
\end{align*}
Recall the objective functional
\begin{equation}\tag{\ref{OBJECTIVE_FUNCTION}}
J(w,\overbar{\mathbf{x}};\pi):=\mathbb E \left(\int_{0}^{+\infty} e^{-(\rho+ \delta) t }
\left( \frac{c(t)^{1-\gamma}}{1-\gamma}
+ \delta \frac{\big(k B(t)\big)^{1-\gamma}}{1-\gamma}\right) {\rm d}t
\right)\ ;
\end{equation}
the part of the integrand in the definition of $J$ that depends on the controls is the \emph{utility function}
\begin{equation*}
  U(\pi)=\frac{c^{1-\gamma}}{1-\gamma}+ \delta \frac{\big(k B\big)^{1-\gamma}}{1-\gamma}\ .
\end{equation*}
Here we write for simplicity $J$ and $U$ as functions of $\pi$ although they actually only depend on the control triple through $c$ and $B$ and not through $\theta$.\\
Our goal is to solve the following:
\begin{problem}
\label{pbl}
  Under Assumptions \ref{hp:S}, \ref{hp:tau}, \ref{Hyp_K} and \ref{Hyp_gamma} and for given fixed $(w,\overbar{\mathbf{x}})\in\calh$ as in (\ref{innt_cond_sec_4}), find $\tilde{\pi}\in\Pi\left(w,\overbar{\mathbf{x}}\right)$ such that
  \begin{equation*}
    J\left(w,\overbar{\mathbf{x}};\tilde{\pi}\right)=\max_{\pi\in\Pi(w,\overbar{\mathbf{x}})}J\left(w,\overbar{\mathbf{x}};\pi\right)\ .
  \end{equation*}
\end{problem}

The following result can be proved with a straightforward adaptation of the proof of \cite[Proposition 3.1]{BGP}.
\begin{proposition}
  The adjoint operator of $A$ is the operator $A^\ast\colon\cald(A^\ast)\subset\calm_2^2\to\calm_2^2$ defined as
  \begin{equation}
    \label{Astar}
    \begin{gathered}
      \cald(A^\ast)\colon=\left\{\left(\mathbf{y_0},\mathbf{y_1}\right)\colon\mathbf{y_1}\in W^{1,2}\left([-d,0];\mathbb{R}^2\right),\mathbf{y_1}(-d)=0\right\}\ ,\\
      A^\ast\left(\mathbf{y_0},\mathbf{y_1}\right)=\left(C^\top\mathbf{y_0}+\mathbf{y_1}(0),\mathbf{y_0}\phi-\frac{{\rm d}}{{\rm d} s}\mathbf{y_1}\right)\
    \end{gathered}
  \end{equation}
  where $C^\top$ is the transpose of the matrix $C$.
\end{proposition}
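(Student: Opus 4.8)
The statement to prove is that the adjoint of the operator $A$ (defined in \eqref{defA}) is given by the formula \eqref{Astar}. The plan is to compute $\langle A\overbar{\mathbf{x}},\overbar{\mathbf{y}}\rangle_{\calm_2^2}$ for $\overbar{\mathbf{x}}\in\cald(A)$ and a candidate $\overbar{\mathbf{y}}$, integrate by parts in the $L^2(-d,0;\R^2)$ component to move the derivative off $\mathbf{x_1}$, and then read off both the domain $\cald(A^\ast)$ and the action of $A^\ast$ from the requirement that no boundary terms survive and that the pairing equals $\langle\overbar{\mathbf{x}},A^\ast\overbar{\mathbf{y}}\rangle_{\calm_2^2}$ for all $\overbar{\mathbf{x}}\in\cald(A)$.

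Concretely, I would write $\overbar{\mathbf{x}}=(\mathbf{x_0},\mathbf{x_1})$ with $\mathbf{x_1}\in W^{1,2}(-d,0;\R^2)$ and $\mathbf{x_1}(0)=\mathbf{x_0}$, and $\overbar{\mathbf{y}}=(\mathbf{y_0},\mathbf{y_1})$. Then
\begin{align*}
\langle A\overbar{\mathbf{x}},\overbar{\mathbf{y}}\rangle_{\calm_2^2}
&=\Big\langle C\mathbf{x_0}+\int_{-d}^0\phi(s)\mathbf{x_1}(s)\,{\rm d}s,\mathbf{y_0}\Big\rangle_{\R^2}
+\Big\langle \frac{{\rm d}}{{\rm d}s}\mathbf{x_1},\mathbf{y_1}\Big\rangle_{L^2}\\
&=\langle \mathbf{x_0},C^\top\mathbf{y_0}\rangle_{\R^2}
+\int_{-d}^0\langle \mathbf{x_1}(s),\phi(s)\mathbf{y_0}\rangle_{\R^2}\,{\rm d}s
+\int_{-d}^0\Big\langle \frac{{\rm d}}{{\rm d}s}\mathbf{x_1}(s),\mathbf{y_1}(s)\Big\rangle_{\R^2}\,{\rm d}s\ .
\end{align*}
Integration by parts on the last integral gives $\langle\mathbf{x_1}(0),\mathbf{y_1}(0)\rangle-\langle\mathbf{x_1}(-d),\mathbf{y_1}(-d)\rangle-\int_{-d}^0\langle\mathbf{x_1}(s),\tfrac{{\rm d}}{{\rm d}s}\mathbf{y_1}(s)\rangle\,{\rm d}s$. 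Using the constraint $\mathbf{x_1}(0)=\mathbf{x_0}$, the boundary term at $0$ combines with the finite-dimensional pairing, producing $\langle\mathbf{x_0},C^\top\mathbf{y_0}+\mathbf{y_1}(0)\rangle$. For the expression to define a bounded functional of $\mathbf{x_1}\in L^2$ extended to all of $\cald(A)$, and to have the form $\langle\overbar{\mathbf{x}},A^\ast\overbar{\mathbf{y}}\rangle_{\calm_2^2}$, one needs the leftover boundary term $-\langle\mathbf{x_1}(-d),\mathbf{y_1}(-d)\rangle$ to vanish for all admissible $\mathbf{x_1}$, which forces $\mathbf{y_1}(-d)=0$; and one needs $\tfrac{{\rm d}}{{\rm d}s}\mathbf{y_1}\in L^2$, i.e. $\mathbf{y_1}\in W^{1,2}$. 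Collecting terms yields $\langle\overbar{\mathbf{x}},A^\ast\overbar{\mathbf{y}}\rangle_{\calm_2^2}$ with $A^\ast(\mathbf{y_0},\mathbf{y_1})=\big(C^\top\mathbf{y_0}+\mathbf{y_1}(0),\ \mathbf{y_0}\phi-\tfrac{{\rm d}}{{\rm d}s}\mathbf{y_1}\big)$, exactly as claimed.

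To be fully rigorous about the domain I would argue both inclusions. The computation above shows that any $\overbar{\mathbf{y}}$ with $\mathbf{y_1}\in W^{1,2}$ and $\mathbf{y_1}(-d)=0$ lies in $\cald(A^\ast)$ with the stated action. For the reverse inclusion, suppose $\overbar{\mathbf{y}}\in\cald(A^\ast)$, i.e. there is $\overbar{\mathbf{z}}=(\mathbf{z_0},\mathbf{z_1})\in\calm_2^2$ with $\langle A\overbar{\mathbf{x}},\overbar{\mathbf{y}}\rangle=\langle\overbar{\mathbf{x}},\overbar{\mathbf{z}}\rangle$ for all $\overbar{\mathbf{x}}\in\cald(A)$. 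Testing first against elements with $\mathbf{x_0}=0$ and $\mathbf{x_1}\in C_c^\infty(-d,0;\R^2)$ identifies $\tfrac{{\rm d}}{{\rm d}s}\mathbf{y_1}$ in the distributional sense as $\mathbf{y_0}\phi-\mathbf{z_1}\in L^2$, so $\mathbf{y_1}\in W^{1,2}$; then testing against general $\overbar{\mathbf{x}}\in\cald(A)$ and integrating by parts produces the boundary term $\langle\mathbf{x_1}(-d),\mathbf{y_1}(-d)\rangle$, which must vanish for all choices of $\mathbf{x_1}(-d)\in\R^2$, forcing $\mathbf{y_1}(-d)=0$; and matching the finite-dimensional part gives $\mathbf{z_0}=C^\top\mathbf{y_0}+\mathbf{y_1}(0)$.

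The only mildly delicate point — and the place where the proof differs from the analogous statement in \cite{BGP} — is bookkeeping the $\R^2$-valued structure and the off-diagonal $C$: one must be careful that $\phi$ is scalar so it commutes past the pairing, and that the transpose $C^\top$ (not $C$) appears, coming from $\langle C\mathbf{x_0},\mathbf{y_0}\rangle_{\R^2}=\langle\mathbf{x_0},C^\top\mathbf{y_0}\rangle_{\R^2}$. There is no serious analytic obstacle: the argument is the standard integration-by-parts characterization of the adjoint of a first-order differential operator on $L^2$ with a coupling boundary condition, and it is essentially a verbatim adaptation of \cite[Proposition 3.1]{BGP} with $\R$ replaced by $\R^2$ and the scalar drift coefficient replaced by the matrix $C$.
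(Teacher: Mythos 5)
Your proof is correct: the integration-by-parts computation, the identification of the boundary condition $\mathbf{y_1}(-d)=0$ and of the action $\left(C^\top\mathbf{y_0}+\mathbf{y_1}(0),\,\mathbf{y_0}\phi-\tfrac{{\rm d}}{{\rm d}s}\mathbf{y_1}\right)$, and the two-inclusion argument for the domain (testing first against $\mathbf{x_0}=0$, $\mathbf{x_1}\in C^\infty_c$ to get $\mathbf{y_1}\in W^{1,2}$, then against general elements of $\cald(A)$) are all sound. This is exactly the route the paper intends, since it proves the proposition by a straightforward adaptation of \cite[Proposition 3.1]{BGP} with $\R$ replaced by $\R^2$ and the scalar drift coefficient by the matrix $C$, which is precisely what you carried out.
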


\subsection{The HJB equation}
\label{sub:HJB}

We now introduce the Hamiltonian for our control problem: formally we expect it to be the function
\begin{equation*}
  \widetilde\H\colon\big(\R\times D(A)\big)\times \calh\times \caln_{\text{sym}} \to \R\cup\{\pm \infty\}
\end{equation*}
given by
% \begin{multline*}
%   \widetilde\H\left(\left(w,\overbar{\mathbf{x}}\right),\left(u,\overbar{\mathbf{p}}\right),Q\right)\\
%   =\sup_{\pi\in\R^n\times\R_+\times\R_+}\Big[\left\langle\calb\left(w,\overbar{\mathbf{x}},\pi\right),\left(u,\overbar{\mathbf{p}}\right)\right\rangle_\calh+\frac12\left\langle \cals\left(w,\overbar{\mathbf{x}},\pi\right),Q\cals\left(w,\overbar{\mathbf{x}},\pi\right)\right\rangle_\calg+U(\pi)\Big]\ .
% \end{multline*}
\begin{multline*}
  \widetilde\H\left(\left(w,\overbar{\mathbf{x}}\right),\left(u,\overbar{\mathbf{p}}\right),Q\right)\\
  =\sup_{\pi\in\R^n\times\R_+\times\R_+}\Big[\left\langle\calb\left(w,\overbar{\mathbf{x}},\pi\right),\left(u,\overbar{\mathbf{p}}\right)\right\rangle_\calh+\frac12\mathrm{Tr}\left[Q\cals\left(w,\overbar{\mathbf{x}},\pi\right)\cals\left(w,\overbar{\mathbf{x}},\pi\right)^\ast\right]+U(\pi)\Big]\ .
\end{multline*}
It is however convenient (see also Remark \ref{rem:H_tilde} below) to define it a bit differently. Using the definitions of $\calb$ and $\cals$ (and in particular the defining property of $A^\ast$ with respect to the duality product appearing in $\calb$) together with (\ref{eq:trace}) we can write the Hamiltonian in a more explicit way, separating at the same time the part that depends on the controls from the rest; we thus choose as the Hamiltonian for our problem the function
\begin{equation*}
  \H\colon\calh\times \big(\R\times D(A^\ast)\big)\times \caln_{\text{sym}} \to \R\cup\{\pm \infty\}
\end{equation*}
given by
\begin{equation*}
  \H\left(\left(w,\overbar{\mathbf{x}}\right),\left(u,\overbar{\mathbf{p}}\right),Q\right):=\H_0\left((w,\overbar{\mathbf{x}},(u,\overbar{\mathbf{p}}),Q_{22}\right)+\H_{\text{max}}\left(P_{1,0}\overbar{\mathbf{x}},u,Q_{11},Q_{12}\right)\ ,
\end{equation*}
where
\begin{align*}
  \H_0\left((w,\overbar{\mathbf{x}}),(u,\overbar{\mathbf{p}}),Q_{22}\right)&=(r+\delta)wu+P_{1,0}\overbar{\mathbf{x}}u+\langle\overbar{\mathbf{x}},A^\ast\overbar{\mathbf{p}}\rangle_{\calm_2^2}+\frac12 Q_{22}\left(P_{1,0}\overbar{\mathbf{x}}\right)^2\sigma_y\cdob\sigma_y\\
  &=(r+\delta)wu+x^{(1)}_0u+\langle\overbar{\mathbf{x}},A^\ast\overbar{\mathbf{p}}\rangle_{\calm_2^2}+\frac12 Q_{22}\left\vert x^{(1)}_0\sigma_y\right\vert^2\ ,
\end{align*}
and
\begin{equation}
  \label{Hmax}
  \H_{\text{max}}\left(x_0^{(1)},u,Q_{11},Q_{12}\right)=\sup_{\pi\in\R^n\times\R_+\times\R_+}\H_c(x_0^{(1)},u,Q_{11},Q_{12},\pi)\ ,
\end{equation}
with
\begin{align*}
  \H_c(x_0^{(1)},u,Q_{11},Q_{12},\pi)&=\left[\theta\cdob(\mu-r\mathbf{1})-c-\delta B\right]u+\frac12\left\vert\theta^\top\sigma\right\vert^2Q_{11}+\theta^\top\sigma\sigma_y x^{(1)}_0Q_{12}\\
                                     &+\frac{c^{1-\gamma}}{1-\gamma}+ \delta \frac{\big(k B\big)^{1-\gamma}}{1-\gamma}\ .
\end{align*}
Reordering the terms in the definition of $\H_c$ we can write
\begin{align*}
  \H_c(x_0^{(1)},u,Q_{11},Q_{12},\pi)&=\frac{c^{1-\gamma}}{1-\gamma}-cu+\delta \frac{\big(k B\big)^{1-\gamma}}{1-\gamma}-\delta Bu\\
                                     &+\frac12\left\vert\theta^\top\sigma\right\vert^2Q_{11}+\theta^\top\sigma\sigma_y x^{(1)}_0Q_{12}+\theta\cdob(\mu-r\mathbf{1})u
\end{align*}
from which is apparent that for each $x^{(1)}_0\in\R$ and $Q_{12}\in\R$ there are three possible situations:
\begin{enumerate}[label=$\mathbf{(\roman{*})}$]
\item\label{case1} if $u>0$ and $Q_{11}<0$ the supremum in (\ref{Hmax}) is achieved at $\left(\theta^\ast,c^\ast,B^\ast\right)$, where
\begin{equation*}
  \theta^\ast=-(\sigma\sigma^\top)^{-1}\frac{1}{Q_{11}}\left[(\mu-r\mathbf{1})u+\sigma\sigma_yx^{(1)}_0Q_{12}\right]\ ,c^\ast=u^{-\frac{1}{\gamma}}\ ,B^\ast=k^{-b}u^{-\frac{1}{\gamma}}
\end{equation*}
with
\begin{equation*}
  b=1-\frac{1}{\gamma}
\end{equation*}
or equivalently
\begin{equation*}
  b=\frac{1}{\gamma^\prime}\text{ with }\gamma^\prime=\frac{\gamma}{\gamma-1}\ ;
\end{equation*}
\item if $u<0$ or $Q_{11}>0$ then the supremum in (\ref{Hmax}) is $+\infty$;
\item if $uQ_{11}=0$ the supremum in (\ref{Hmax}) can be finite or infinite depending on $\gamma$ and on the sign of the other terms involved.
\end{enumerate}

The Hamilton-Jacobi-Bellman equation associated with Problem \ref{pbl} is the partial differential equation in the unknown $v\colon\calh\to\R$
\begin{equation}
  \label{HJB}
  (\rho+\delta)v(w,\overbar{\mathbf{x}})=\H\left(w,\overbar{\mathbf{x}},\nabla v(w,\overbar{\mathbf{x}}),\nabla^2v(w,\overbar{\mathbf{x}})\right)\ .
\end{equation}
\begin{definition}
  \label{def_sol}
  A function $v\colon\calh_{++}\to\R$ is a \emph{classical solution} of the Hamilton-Jacobi-Bellman equation (\ref{HJB}) if it satisfies:
  \begin{enumerate}[label=$(\alph{*})$]
  \item $v$ is continuously Fr\'echet differentiable in $\calh_{++}$ and its four second Fr\'echet derivatives with respect to the couple $(w,x^{(1)}_0)$ exist and are continuous in $\calh_{++}$;
  \item\label{regularity} $\partial_{\overbar{\mathbf{x}}}v(w,\overbar{\mathbf{x}})$ belongs to $\cald(A^\ast)$ for every $(w,\overbar{\mathbf{x}})\in\calh_{++}$ and $A^\ast\partial_{\overbar{\mathbf{x}}}v$ is continuous in $\calh_{++}$;
  \item $v$ satisfies (\ref{HJB}) for every $(w,\overbar{\mathbf{x}})\in\calh_{++}$.
  \end{enumerate}
\end{definition}

\begin{remark}
\label{rem:H_tilde}
  The difference between $\widetilde\H$ and $\H$ lies in the term involving $A$, that appears as $\langle A\overbar{\mathbf{x}},\overbar{\mathbf{p}}\rangle$ in the former but as $\langle\overbar{\mathbf{x}},A^\ast\overbar{\mathbf{p}}\rangle$ in the latter. This choice makes $\H$ defined on the whole $\calh_{++}$ instead than only on $\calh_{++}\cap(\R\times\cald(A))$, at the price of requiring further regularity of the solution, as specified in Definition \ref{def_sol}-~\ref{regularity}. This will not constitute a problem as we are going to find an explicit solution that satisfies the required properties.
\end{remark}

If a solution $v$ to (\ref{HJB}) satisfies $\nabla v_1=\partial_w v>0$ and $\nabla^2v_{11}=\partial^2_{ww}v<0$ uniformly in $(w,\overbar{\mathbf{x}})$, then we fall in case \ref{case1} above and, plugging $\theta^\ast,c^\ast,B^\ast$ in the definition of $\H$, we find the equation for $v$ to take the form
\begin{equation}
  \label{HJB_short}
\begin{aligned}
  (\rho+\delta)v(w,\overbar{\mathbf{x}})=&(r+\delta)w\partial_wv(w,\overbar{\mathbf{x}})+x^{(1)}_0\partial_wv(w,\overbar{\mathbf{x}})-\frac{1}{b}\partial_w(w,\overbar{\mathbf{x}})^b\left(1+\delta k^{-b}\right)\\
  &+\left\langle\overbar{\mathbf{x}},A^\ast\partial_{\overbar{\mathbf{x}}}v(w,\overbar{\mathbf{x}})\right\rangle_{\calm_2^2}+\frac12\left\vert x^{(1)}_0\sigma_y\right\vert^2\partial^2_{x^{(1)}_0x^{(1)}_0}v(w,\overbar{\mathbf{x}})\\
  &-\frac12\frac{1}{\partial^2_{ww}v(w,\overbar{\mathbf{x}})}\left[(\mu-r\mathbf{1})\partial_wv(w,\overbar{\mathbf{x}})+\sigma\sigma_yx^{(1)}_0\partial^2_{wx^{(1)}_0}v(w,\overbar{\mathbf{x}})\right]\cdob\\
  &\phantom{(r+\delta)w\partial_wv(w,\overbar{\mathbf{x}})+}\cdob(\sigma\sigma^\top)^{-1}\left[(\mu-r\mathbf{1})\partial_wv(w,\overbar{\mathbf{x}})+\sigma\sigma_yx^{(1)}_0\partial^2_{wx^{(1)}_0}v(w,\overbar{\mathbf{x}})\right]\ .
\end{aligned}
\end{equation}
Set now
\begin{equation}\label{defnufinfty}
  \nu=\frac{\gamma}{\rho+\delta-(1-\gamma)\left(r+\delta+\frac{\vert\kappa\vert^2}{2\gamma}\right)},\quad   f_\infty=(1+\delta k^{-b})\nu
\end{equation}
and define, for every $(w,\overbar{\mathbf{x}})\in\calh_{++}$,
\begin{equation}
  \label{def_v_tilde}
  \tilde{v}(w,\overbar{\mathbf{x}}):=\frac{f_\infty^\gamma}{1-\gamma}\Gamma_\infty^{1-\gamma}(w,\overbar{\mathbf{x}})\ ,
\end{equation}
where $\Gamma_\infty$ is defined in (\ref{gammainfinity}).
\begin{theorem}
  \label{thm_sol1}
  The function $\tilde{v}$ is a classical solution of the Hamilton-Jacobi-Bellman equation (\ref{HJB}).
\end{theorem}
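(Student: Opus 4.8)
The plan is a direct verification: since $\tilde v$ is an explicit power of the affine functional $\Gamma_\infty$, all its derivatives are elementary and the statement collapses to one algebraic identity among the constants.

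First I would record the derivatives. Writing $\Gamma=\Gamma_\infty(w,\overbar{\mathbf{x}})$ and using $\Gamma_\infty(w,\overbar{\mathbf{x}})=w+\langle\overbar{\mathbf{l}}_\infty,\overbar{\mathbf{x}}\rangle_{\calm_2^2}$ (hence $\partial_w\Gamma_\infty=1$, $\partial_{x^{(1)}_0}\Gamma_\infty=g_\infty$, $\partial_{\overbar{\mathbf{x}}}\Gamma_\infty=\overbar{\mathbf{l}}_\infty$), one gets $\partial_w\tilde v=f_\infty^\gamma\Gamma^{-\gamma}>0$, $\partial_{\overbar{\mathbf{x}}}\tilde v=(\partial_w\tilde v)\,\overbar{\mathbf{l}}_\infty$, $Q_{11}:=\partial^2_{ww}\tilde v=-\gamma f_\infty^\gamma\Gamma^{-\gamma-1}<0$, $Q_{12}:=\partial^2_{wx^{(1)}_0}\tilde v=g_\infty Q_{11}$ and $Q_{22}:=\partial^2_{x^{(1)}_0x^{(1)}_0}\tilde v=g_\infty^2Q_{11}$ on $\calh_{++}$. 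The regularity required by Definition \ref{def_sol} is then immediate for part (a) (on $\calh_{++}$ the functional $\Gamma_\infty$ is affine and positive, so $\tilde v$ is smooth there), while for part \ref{regularity} it suffices that $\overbar{\mathbf{l}}_\infty=\xphi{\overbar{l}_\infty}{-i_\infty\overbar{l}_\infty}\in\cald(A^\ast)$: this holds because $h_\infty=g_\infty G$ with $G(-d)=0$ (the boundary condition in \eqref{Astar}) and $G'(s)=\phi(s)-(r+\delta)G(s)\in L^2$, so $h_\infty\in W^{1,2}(-d,0;\R)$ with $h_\infty(-d)=0$; continuity of $A^\ast\partial_{\overbar{\mathbf{x}}}\tilde v=f_\infty^\gamma\Gamma^{-\gamma}A^\ast\overbar{\mathbf{l}}_\infty$ on $\calh_{++}$ is clear. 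Since $\partial_w\tilde v>0$ and $\partial^2_{ww}\tilde v<0$ throughout $\calh_{++}$ we are in case \ref{case1}, so it is enough to verify \eqref{HJB_short}.

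The only computation that is not pure bookkeeping is $A^\ast\overbar{\mathbf{l}}_\infty$, carried out via the explicit form \eqref{Astar} of $A^\ast$ (with the matrix $C$). From $G'(s)=\phi(s)-(r+\delta)G(s)$ the $L^2$-component of $A^\ast\overbar{\mathbf{l}}_\infty$ equals $(r+\delta)$ times the $L^2$-component of $\overbar{\mathbf{l}}_\infty$; for the finite-dimensional component one uses $G(0)=\int_{-d}^0e^{(r+\delta)s}\phi(s)\,{\rm d}s$ together with $g_\infty K_1=1$, $i_\infty K_2=\epsilon$ and $K_2=K_1+(\epsilon-\sigma_y\cdob\kappa)$ (see \eqref{defgiinfty}). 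A short manipulation then yields, for every $(w,\overbar{\mathbf{x}})$, the identity
\begin{equation*}
  \langle\overbar{\mathbf{x}},A^\ast\overbar{\mathbf{l}}_\infty\rangle_{\calm_2^2}+x^{(1)}_0=(r+\delta)\langle\overbar{\mathbf{l}}_\infty,\overbar{\mathbf{x}}\rangle_{\calm_2^2}+g_\infty(\sigma_y\cdob\kappa)\,x^{(1)}_0\ .
\end{equation*}
I would then substitute the derivatives into \eqref{HJB_short} — equivalently, evaluate $\H_0+\H_{\text{max}}$ at the maximizers of case \ref{case1}, using $\sup_{c>0}\{c^{1-\gamma}/(1-\gamma)-cu\}=\tfrac{\gamma}{1-\gamma}u^{b}$, the analogous formula for the bequest term (which produces the factor $1+\delta k^{-b}$), and $\mu-r\mathbf{1}=\sigma\kappa$ with $\sigma^\top(\sigma\sigma^\top)^{-1}\sigma=\Id$. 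Using the displayed identity and $Q_{12}=g_\infty Q_{11}$, $Q_{22}=g_\infty^2Q_{11}$, the cross term $g_\infty(\sigma_y\cdob\kappa)x^{(1)}_0\,\partial_w\tilde v$ cancels the $x^{(1)}_0$-term produced by $\theta^\ast$, and $\tfrac12|x^{(1)}_0\sigma_y|^2\,\partial^2_{x^{(1)}_0x^{(1)}_0}\tilde v$ cancels the $(x^{(1)}_0)^2|\sigma_y|^2$-term produced by $\theta^\ast$. What remains, after dividing by $f_\infty^\gamma\Gamma^{1-\gamma}/(1-\gamma)$ and using $u\Gamma=f_\infty^\gamma\Gamma^{1-\gamma}$, $u^{b}=f_\infty^{\gamma-1}\Gamma^{1-\gamma}$ and $f_\infty=(1+\delta k^{-b})\nu$, is exactly
\begin{equation*}
  \rho+\delta=(1-\gamma)\Big(r+\delta+\tfrac{|\kappa|^2}{2\gamma}\Big)+\tfrac{\gamma}{\nu}\ ,
\end{equation*}
which is precisely the definition of $\nu$ in \eqref{defnufinfty}; Assumption \ref{Hyp_gamma} guarantees $\nu>0$, hence $f_\infty>0$ and $\tilde v$ is well defined and finite on $\calh_{++}$.

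The main obstacle is the computation of $A^\ast\overbar{\mathbf{l}}_\infty$ and the recognition of the two cancellations above: this is exactly where the particular choice of $\Gamma_\infty$ — that is, of $g_\infty,h_\infty,i_\infty$ coming from the human-capital formula of Theorem \ref{thm_HC} — is used. Morally, $\overbar{\mathbf{l}}_\infty$ is chosen so that $\Gamma_\infty$ evolves, along the controlled dynamics, like pure financial wealth with the labor-income drift reabsorbed, which is what makes the Merton-type ansatz $\tilde v\propto\Gamma_\infty^{1-\gamma}$ succeed; everything else is routine.
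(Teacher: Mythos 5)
Your proposal is correct and follows essentially the same route as the paper: compute the explicit first and second derivatives of $\tilde v$, establish $\overbar{\mathbf{l}}_\infty\in\cald(A^\ast)$ via the ODE $h_\infty'=g_\infty\phi-(r+\delta)h_\infty$ with $h_\infty(-d)=0$ (the paper's Lemma \ref{lemma_Dast}), reduce to \eqref{HJB_short} since $\partial_w\tilde v>0$ and $\partial^2_{ww}\tilde v<0$, and verify it by direct substitution, the key cancellation being exactly the paper's identity \eqref{comb_0} (your relation $i_\infty K_2=\epsilon$, $g_\infty K_1=1$, $K_2=K_1+(\epsilon-\sigma_y\cdob\kappa)$), after which everything collapses to the scalar identity defining $\nu$ and $f_\infty$ in \eqref{defnufinfty}. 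Your packaging of the computation of $A^\ast\overbar{\mathbf{l}}_\infty$ into the single identity $\langle\overbar{\mathbf{x}},A^\ast\overbar{\mathbf{l}}_\infty\rangle_{\calm_2^2}+x^{(1)}_0=(r+\delta)\langle\overbar{\mathbf{l}}_\infty,\overbar{\mathbf{x}}\rangle_{\calm_2^2}+g_\infty(\sigma_y\cdob\kappa)x^{(1)}_0$ is just a tidier presentation of the paper's \eqref{Astar_l}--\eqref{comb_0}, not a different argument.
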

To prove the theorem we need a brief result that we state separately for later reference.
\begin{lemma}
  \label{lemma_Dast}
  The element
  \begin{equation*}
    \overbar{\mathbf{l}}_\infty=\begin{pmatrix}(g_\infty,h_\infty)\\-i_\infty(g_\infty,h_\infty)
    \end{pmatrix}
  \end{equation*}
belongs to $\cald(A^\ast)$.
\end{lemma}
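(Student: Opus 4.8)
The plan is to verify directly that the candidate element $\overbar{\mathbf{l}}_\infty = \bigl(\xphi{\overbar{l}_\infty}{-i_\infty\overbar{l}_\infty}\bigr) = \bigl(\xphi{(g_\infty,h_\infty)}{-i_\infty(g_\infty,h_\infty)}\bigr)$ satisfies the two defining conditions for membership in $\cald(A^\ast)$, where $A^\ast$ is given by \eqref{Astar}. In the notation of that proposition, an element $(\mathbf{y_0},\mathbf{y_1})\in\calm_2^2$ lies in $\cald(A^\ast)$ precisely when the $L^2$-component $\mathbf{y_1}$ belongs to $W^{1,2}(-d,0;\R^2)$ and satisfies the boundary condition $\mathbf{y_1}(-d)=0$. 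Here the finite-dimensional component is $\mathbf{y_0} = \xphi{g_\infty}{-i_\infty g_\infty}$ and the function component is $\mathbf{y_1}(s) = \xphi{h_\infty(s)}{-i_\infty h_\infty(s)}$, so everything reduces to checking the two conditions for the scalar function $h_\infty$.

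First I would recall from \eqref{defhinfty} that $h_\infty(s) = g_\infty G(s)$ with $G(s) = \int_{-d}^s e^{-(r+\delta)(s-\tau)}\phi(\tau)\,{\rm d}\tau$. The Sobolev regularity is immediate: $G$ is an absolutely continuous function on $[-d,0]$ (it is the convolution-type integral of the $L^2$ function $\phi$ against a smooth kernel), and a direct differentiation under the integral sign gives $G'(s) = \phi(s) - (r+\delta)\int_{-d}^s e^{-(r+\delta)(s-\tau)}\phi(\tau)\,{\rm d}\tau = \phi(s) - (r+\delta)G(s)$, which belongs to $L^2(-d,0;\R)$ since $\phi\in L^2(-d,0;\R)$ and $G$ is bounded. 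Hence $h_\infty = g_\infty G \in W^{1,2}(-d,0;\R)$, and consequently $\mathbf{y_1}\in W^{1,2}(-d,0;\R^2)$.

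For the boundary condition, I would simply evaluate $G$ at the left endpoint: $G(-d) = \int_{-d}^{-d} e^{-(r+\delta)(-d-\tau)}\phi(\tau)\,{\rm d}\tau = 0$, so that $h_\infty(-d) = g_\infty G(-d) = 0$, and therefore $\mathbf{y_1}(-d) = \xphi{h_\infty(-d)}{-i_\infty h_\infty(-d)} = \xphi{0}{0}$. Both conditions of $\cald(A^\ast)$ are thus met, which proves the lemma.

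There is essentially no obstacle here — the statement is a bookkeeping check built into the very construction of $h_\infty$. The only mild subtlety is confirming that differentiation under the integral sign is legitimate with merely $\phi\in L^2$; this is handled by writing $G(s) = e^{-(r+\delta)s}\int_{-d}^s e^{(r+\delta)\tau}\phi(\tau)\,{\rm d}\tau$, recognizing the inner integral as an absolutely continuous function (an indefinite Lebesgue integral of an $L^1_{\mathrm{loc}}$ function), and applying the product and fundamental-theorem-of-calculus rules for absolutely continuous functions, which yields the formula for $G'$ above for almost every $s$.
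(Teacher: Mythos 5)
Your proof is correct and follows essentially the same route as the paper's: both reduce the claim to the scalar function $h_\infty$, identify its a.e.\ derivative $h_\infty'=g_\infty\phi-(r+\delta)h_\infty\in L^2(-d,0;\R)$ (so $h_\infty\in W^{1,2}$), and note $h_\infty(-d)=0$ directly from the definition of $G$. The only cosmetic difference is that the paper also records the boundary value $h_\infty(0)=\beta g_\infty-1$ and the resulting ODE \eqref{eq_diff} for later use in Theorem \ref{thm_sol1}, which is not needed for the lemma itself.
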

\begin{proof}
Being the integral of an $L^2$ function, $h_\infty$ is differentiable almost everywhere in $[-d,0]$ and
\begin{equation*}
  h^\prime_\infty(s)=-(r+\delta)h_\infty(s)+g_\infty\phi(s)
\end{equation*}
almost everywhere. Set for brevity
\begin{equation*}
  \beta=K_1(r+\delta)+\int_{-d}^0e^{(r+\delta)s}\phi(s){\rm d}s=r+\delta-\my-\epsilon+\sigma_y\cdob\kappa\ ;
\end{equation*}
then
\begin{equation*}
  \beta g_\infty-h_\infty(0)=1
\end{equation*}
and therefore $h_\infty$ satisfies the differential equation
\begin{equation}
\label{eq_diff}
  \begin{cases}
    h^\prime=g_\infty \phi-(r+\delta)h\\
    h(0)=\beta g_\infty-1\ .
  \end{cases}
\end{equation}
Since $1\geq e^{-2(r+\delta)(s-\tau)}>0$ on $-d\leq\tau\leq s\leq 0$ and $\phi$ is an $L^2$ function, it is easy to check that $h_\infty$ is in $L^2$ as well; this implies that actually $h_\infty\in W^{1,2}(-d,0:\R)$ and since obviously $h_\infty(-d)=0$ the claim is proved.
\end{proof}
\begin{proof}[Proof of Theorem \ref{thm_sol1}]
  Recall that $\calh_{++}$ is by definition the set where $\Gamma_\infty$ is strictly positive. Thanks to the linearity of $\Gamma_\infty$ the function $\tilde{v}$ is twice continuously Fr\'echet differentiable in all variables. The derivatives that appear in the Hamiltonian are easily computed:
  \begin{align*}
    \partial_w\tilde{v}(w,\overbar{\mathbf{x}})&=f^\gamma_\infty\Gamma_\infty^{-\gamma}(w,\overbar{\mathbf{x}}),\\
    \partial_{\overbar{\mathbf{x}}}\tilde{v}(w,\overbar{\mathbf{x}})&=f_\infty^\gamma\Gamma_\infty^{-\gamma}(w,\overbar{\mathbf{x}})\overbar{\mathbf{l}}_\infty,\\
    \partial^2_{ww}\tilde{v}(w,\overbar{\mathbf{x}})&=-\gamma f^\gamma_\infty\Gamma_\infty^{-(1+\gamma)}(w,\overbar{\mathbf{x}}),\\
    \partial^2_{wx^{(1)}_0}\tilde{v}(w,\overbar{\mathbf{x}})&=-\gamma f^\gamma_\infty\Gamma_\infty^{-(1+\gamma)}(w,\overbar{\mathbf{x}})g_\infty,\\
    \partial^2_{x^{(1)}_0x^{(1)}_0}\tilde{v}(w,\overbar{\mathbf{x}})&=-\gamma f^\gamma_\infty\Gamma_\infty^{-(1+\gamma)}(w,\overbar{\mathbf{x}})g^2_\infty\ .
  \end{align*}

Therefore thanks to Lemma \ref{lemma_Dast} also requirement \ref{regularity} in the definition of solution is satisfied. It remains to check that $\tilde{v}$ satisfies (\ref{HJB}).\\
Since $f_\infty>0$, by definition of $\calh_{++}$ we have $\partial_w\tilde{v}>0$ and $\partial^2_{ww}\tilde{v}<0$ on $\calh_{++}$, therefore we can consider the simplified form (\ref{HJB_short}) for the Hamilton-Jacobi-Bellman equation.\\
Let us now look at the various pieces appearing in (\ref{HJB_short}). We have, by simple computations,
\begin{multline*}
  (\rho+\delta)w\partial_wv(w,\overbar{\mathbf{x}})+x^{(1)}_0\partial_wv(w,\overbar{\mathbf{x}})-\frac{1}{b}\partial_w(w,\overbar{\mathbf{x}})^b\left(1+\delta k^{-b}\right)\\
  =f_\infty^\gamma\Gamma_\infty^{-\gamma}(w,\overbar{\mathbf{x}})\left[(r+d)w+x^{(1)}_0-\frac{\gamma}{\gamma-1}f_\infty^{-1}\Gamma_\infty(w,\overbar{\mathbf{x}})\left(1+\delta k^{\frac{1-\gamma}{\gamma}}\right)\right]\ ,
\end{multline*}
\begin{equation*}
  \frac12\left\vert x^{(1)}_0\sigma_y\right\vert^2\partial^2_{x^{(1)}_0x^{(1)}_0}v(w,\overbar{\mathbf{x}})=-f_\infty^\gamma\Gamma_\infty^{-\gamma}(w,\overbar{\mathbf{x}})\frac12\left\vert x^{(1)}_0\right\vert^2\left\vert\sigma_y\right\vert^2\gamma g_\infty^2 \Gamma_\infty^{-1}(w,\overbar{\mathbf{x}})\ ,
\end{equation*}
\begin{multline*}
  -\frac12\frac{1}{\partial^2_{ww}(w,\overbar{\mathbf{x}})}\left[(\mu-r\mathbf{1})\partial_wv(w,\overbar{\mathbf{x}})+\sigma\sigma_yx^{(1)}_0\partial^2_{wx^{(1)}_0}v(w,\overbar{\mathbf{x}})\right]\cdob\\
  \cdob(\sigma\sigma^\top)^{-1}\left[(\mu-r\mathbf{1})\partial_wv(w,\overbar{\mathbf{x}})+\sigma\sigma_yx^{(1)}_0\partial^2_{wx^{(1)}_0}v(w,\overbar{\mathbf{x}})\right]\\
  =f_\infty^\gamma\Gamma_\infty^{-\gamma}(w,\overbar{\mathbf{x}})\frac{1}{2\gamma}\Gamma_\infty(w,\overbar{\mathbf{x}})\left[\left\vert\kappa\right\vert^2-2\gamma x^{(1)}_0g_\infty\kappa\cdob\sigma_y \Gamma_\infty^{-1}(w,\overbar{\mathbf{x}})+\left\vert x^{(1)}_0\right\vert^2\left\vert\sigma_y\right\vert^2\gamma^2 g_\infty^2\Gamma_\infty^{-2}(w,\overbar{\mathbf{x}})\right]
\end{multline*}
and finally, using (\ref{Astar}) and (\ref{eq_diff}),
\begin{equation*}
  \left(A^\ast\partial_{\overbar{\mathbf{x}}}\tilde v(w,\overbar{\mathbf{x}})\right)_0=f_\infty^\gamma\Gamma_\infty^{-\gamma}(w,\overbar{\mathbf{x}})\begin{pmatrix}g_\infty\left(r+\delta+\sigma_y\cdob\kappa\right)-1\\i_\infty-\epsilon g_\infty-i_\infty g_\infty\left(r+\delta-\epsilon+\sigma_y\cdob\kappa\right)\end{pmatrix}
\end{equation*}
and
\begin{equation*}
  \left(A^\ast\partial_{\overbar{\mathbf{x}}}\tilde v(w,\overbar{\mathbf{x}})\right)_1=f_\infty^\gamma\Gamma_\infty^{-\gamma}(w,\overbar{\mathbf{x}})(r+\delta)\begin{pmatrix}h_\infty\\-i_\infty h_\infty\end{pmatrix}\ ,
\end{equation*}
hence
\begin{equation}
  \label{Astar_l}
\begin{aligned}
  \left\langle\overbar{\mathbf{x}},A^\ast\partial_{\overbar{\mathbf{x}}}v(w,\overbar{\mathbf{x}})\right\rangle_{\calm_2^2}&=f_\infty^\gamma\Gamma_\infty^{-\gamma}(w,\overbar{\mathbf{x}})x^{(1)}_0\left(g_\infty\left(r+\delta+\sigma_y\cdob\kappa\right)-1\right)\\
                                                                                                                          &\phantom{=}+f_\infty^\gamma\Gamma_\infty^{-\gamma}(w,\overbar{\mathbf{x}})x^{(2)}_0\left(i_\infty-\epsilon g_\infty-i_\infty g_\infty\left(r+\delta-\epsilon+\sigma_y\cdob\kappa\right)\right)\\
                                                                                                                          &\phantom{=}+f_\infty^\gamma\Gamma_\infty^{-\gamma}(w,\overbar{\mathbf{x}})(r+\delta)\left\langle x^{(1)}_1,h_\infty\right\rangle-(r+\delta)i_\infty\left\langle x^{(2)}_1,h_\infty\right\rangle\ .
\end{aligned}
\end{equation}
Plugging now everything into (\ref{HJB_short}) and multiplying both sides by $f_\infty^{-\gamma}\Gamma_\infty^{\gamma}(w,\overbar{\mathbf{x}})$ (which is a positive quantity on $\calh_{++}$ by Assumption \ref{Hyp_gamma}) we find
\begin{align*}
  \frac{\rho+\delta}{1-\gamma}\Gamma_\infty(w,\overbar{\mathbf{x}})&=(r+\delta)w-\frac{\gamma}{\gamma-1}f_\infty^{-1}\Gamma_\infty(w,\overbar{\mathbf{x}})\left(1+\delta k^{\frac{1-\gamma}{\gamma}}\right)+\frac{1}{2\gamma}\vert\kappa\vert^2 \Gamma_\infty(w,\overbar{\mathbf{x}})+\left(r+\delta\right)x^{(1)}_0g_\infty\\
                                                                 &\phantom{=}+x^{(2)}_0i_\infty-x^{(2)}_0\epsilon g_\infty-x^{(2)}_0i_\infty g_\infty (r+\delta)+x^{(2)}_0i_\infty g_\infty(\epsilon-\sigma_y\cdob\kappa)\\
                                                                 &\phantom{=}+(r+\delta)\left\langle x^{(1)}_1,h_\infty\right\rangle-(r+\delta)i_\infty\left\langle x^{(2)}_1,h_\infty\right\rangle\\
                                                                &=(r+\delta)\Gamma_\infty(w,\overbar{\mathbf{x}})-\frac{\gamma}{\gamma-1}f_\infty^{-1}\Gamma_\infty(w,\overbar{\mathbf{x}})\left(1+\delta k^{\frac{1-\gamma}{\gamma}}\right)+\frac{1}{2\gamma}\vert\kappa\vert^2 \Gamma_\infty(w,\overbar{\mathbf{x}})\\
                                                                &\phantom{=}+x^{(2)}_0\big(i_\infty-\epsilon g_\infty+i_\infty g_\infty (\epsilon-\sigma_y\cdob\kappa)\big)
\end{align*}
but
\begin{align}
\nonumber  i_\infty-\epsilon g_\infty+i_\infty g_\infty (\epsilon-\sigma_y\cdob\kappa)&=\frac{\epsilon}{K_2}-\frac{\epsilon}{K_1}+\frac{\epsilon}{K_1K_2}(\epsilon-\sigma_y\cdob\kappa)\\
\label{comb_0}                                                                       &=\frac{\epsilon}{K_1K_2}(K_1-K_2+\epsilon-\sigma_y\cdob\kappa)=0\
\end{align}
therefore, dividing by the positive quantity $\Gamma_\infty(w,\overbar{\mathbf{x}})$ we obtain eventually
\begin{equation*}
  \frac{\rho+\delta}{1-\gamma}=(r+\delta)-\frac{\gamma}{\gamma-1}f_\infty^{-1}\left(1+\delta k^{\frac{1-\gamma}{\gamma}}\right)+\frac{1}{2\gamma}\vert\kappa\vert^2\
\end{equation*}
and this last equality is easily shown to hold true by the definition of $f_\infty$.
\end{proof}

\section{Solution of the general  problem}

\subsection{The admissible paths at the boundary}
Fix $(w,\overbar{\mathbf{x}})\in\calh_+$ and $\pi\in\Pi(w,\overbar{\mathbf{x}})$ and let $\calx(\cdot;\pi)=\left(W(\cdot;\pi),\overbar{\mathbf{Y}}(\cdot)\right)$ be the corresponding solution of (\ref{systemH_condensed}). Applying the Ito formula proved in \cite[Proposition 1.165]{FABBRI_GOZZI_SWIECH_BOOK} to the process $\langle\overbar{\mathbf{l}}_\infty,\overbar{\mathbf{Y}}\rangle_{\calm_2^2}$ and using (\ref{Astar_l}), (\ref{comb_0}) and (\ref{DEF_KAPPA}) we obtain
\begin{equation}
  \label{ito_l_infty}
  \begin{aligned}
{\rm d}\langle\overbar{\mathbf{l}}_\infty,&\overbar{\mathbf{Y}}(t)
\rangle_{\calm_2^2}=\langle A^\ast\overbar{\mathbf{l}}_\infty,\overbar{\mathbf{Y}}(t)
\rangle_{\calm_2^2}{\rm d}t+\langle\overbar{\mathbf{l}}_\infty,
F\left(\overbar{\mathbf{Y}}(t)\right)\cdob{\rm d}Z(t)\rangle_{\calm_2^2}\\
&=y(t)\left(g_\infty(r+\delta+\sigma_y\cdob\kappa)-1\right){\rm d}t+e(t)\left(i_\infty-\epsilon g_\infty-i_\infty g_\infty(r+\delta-\epsilon+\sigma_y\cdob\kappa)\right){\rm d}t\\
&\phantom{=}+(r+\delta)\langle y(t+\cdot),h_\infty\rangle{\rm d}t-(r+\delta)i_\infty\langle e(t+\cdot),h_\infty\rangle{\rm d}t+g_\infty y(t)\sigma_y\cdob{\rm d}Z(t);
    \end{aligned}
  \end{equation}
therefore setting
\begin{equation}
\label{def_Gamma_proc}
\overline\Gamma_\infty(t):=\Gamma_\infty\left(W(t;\pi),\overbar{\mathbf{Y}}(t)\right)
\end{equation}
we have
\begin{align}
\nonumber  {\rm d}\overline\Gamma_\infty(t)&=y(t)\left(g_\infty(r+\delta+\sigma_y\cdob\kappa)-1\right){\rm d}t+e(t)\left(i_\infty-\epsilon g_\infty-i_\infty g_\infty(r+\delta-\epsilon+\sigma_y\cdob\kappa)\right){\rm d}t\\
\nonumber                         &\phantom{=}+(r+\delta)\langle y(t+\cdot),h_\infty\rangle{\rm d}t-(r+\delta)i_\infty\langle e(t+\cdot),h_\infty\rangle{\rm d}t+g_\infty y(t)\sigma_y\cdob{\rm d}Z(t)\\
\nonumber                         &\phantom{=}+(r+\delta)W(t)+\left(\theta(t)\cdob(\mu-r\1)-c(t)-\delta B(t)\right){\rm d}t+y(t){\rm d}t+\theta(t)\cdob\sigma{\rm d}Z(t)\\
\label{dGamma}                                                                                     &=(r+\delta)\overline\Gamma_\infty(t){\rm d}t-\left(c(t)+\delta B(t)\right){\rm d}t+\left(g_\infty y(t)\sigma_y+\sigma^\top\theta(t)\right)\cdob\left(\kappa{\rm d}t+{\rm d}Z(t)\right)\ .
\end{align}
In what follows we will denote by $\tau_+$ the first exit time of $\calx(\cdot;\pi)$ from $\calh_{++}$:
\begin{equation}
  \label{eq:tau_+}
  \tau_+=\inf\left\{t\geq 0\colon \calx\in\calh_{++}^\complement\right\}=\inf\left\{t\geq 0\colon \calx\in\partial\calh_+\right\}=\inf\left\{t\geq 0\colon\overline\Gamma_\infty(t)=0\right\}.
\end{equation}

We can then prove the following result on the behavior of the process $\overbar{\Gamma}_\infty$ when it hits the boundary of $\calh_+$. The proof is postponed to the Appendix.
\begin{proposition}
\label{prop_tau}
  Let $(w,\overbar{\mathbf{x}})\in\calh_+$;
 \begin{enumerate}[label=$(\roman{*})$]
 \item if $\Gamma_\infty(w,\overbar{\mathbf{x}})=\overline\Gamma_\infty(0)=0$ then $\P$-a.s. $\overline\Gamma_\infty(t)=0$ for every $t>0$ and
   \begin{equation*}
     c(t,\omega)=B(t,\omega)=0,\quad g_\infty y(t,\omega)\sigma_y+\sigma^\top\theta(t,\omega)=0
   \end{equation*}
${\rm d}t\otimes\P$-a.e. on $[0,+\infty)\times\Omega$;
   \item if $\Gamma_\infty(w,\overbar{\mathbf{x}})>0$ (i.e. $\overline\Gamma_\infty(0)\in\calh_{++}$) then $\P$-a.s. for every $t\geq 0$
     \begin{equation*}
       \ind_{(\tau_+,+\infty)}(t)\overline\Gamma_\infty(t)=0
     \end{equation*}
and
\begin{equation*}
  \ind_{(\tau_+,+\infty)}(t)c(t,\omega)=\ind_{(\tau_+,+\infty)}(t)B(t,\omega)=0,\quad \ind_{(\tau_+,+\infty)}(t)\left(g_\infty y(t,\omega)\sigma_y+\sigma^\top\theta(t,\omega)\right)=0
\end{equation*}
${\rm d}t\otimes\P$-a.e. on $[0,+\infty)\times\Omega$.
 \end{enumerate}
\end{proposition}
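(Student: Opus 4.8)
The plan is to reduce the statement to the classical absorption-at-zero property of nonnegative supermartingales. First I would introduce the process $N(t):=\xi(t)\,\overline\Gamma_\infty(t)$, with $\xi$ the state-price density solving \eqref{DYN_STATE_PRICE_DENSITY}, and compute ${\rm d}N$ by It\^o's product rule, inserting \eqref{dGamma} for $\overline\Gamma_\infty$. The drift terms proportional to $(r+\delta)$ cancel against those produced by ${\rm d}\xi$, and the two contributions $\pm\,\kappa\cdob\big(g_\infty y\sigma_y+\sigma^\top\theta\big)$ --- one from \eqref{dGamma}, one from the cross variation $\langle\xi,\overline\Gamma_\infty\rangle$ --- cancel as well, leaving
\begin{equation*}
  {\rm d}N(t)=-\xi(t)\big(c(t)+\delta B(t)\big)\,{\rm d}t+\xi(t)\big(g_\infty y(t)\sigma_y+\sigma^\top\theta(t)-\overline\Gamma_\infty(t)\kappa\big)\cdob{\rm d}Z(t)\ .
\end{equation*}
Since $c,B\geq 0$ and $\xi>0$, this displays $N$ as a continuous local martingale minus a non-decreasing process; admissibility of $\pi$ gives $\overline\Gamma_\infty\geq 0$, hence $N\geq 0$, so $N+\int_0^\cdot\xi(s)\big(c(s)+\delta B(s)\big)\,{\rm d}s$ is a nonnegative continuous local martingale, thus a supermartingale by Fatou's lemma, and consequently $N$ itself is a nonnegative continuous supermartingale. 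I would also record that $\tau_+=\inf\{t\geq 0\colon N(t)=0\}$, because $\xi>0$.

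For part $(i)$, where $\Gamma_\infty(w,\overbar{\mathbf{x}})=0$, one has $N(0)=\overline\Gamma_\infty(0)=0$, hence $0\leq\E[N(t)]\leq N(0)=0$ for every $t$; nonnegativity forces $N(t)=0$ $\P$-a.s.\ for each $t$, and path-continuity upgrades this to $N\equiv 0$ $\P$-a.s., i.e.\ $\overline\Gamma_\infty\equiv 0$ $\P$-a.s. Plugging $\overline\Gamma_\infty\equiv 0$ back into \eqref{dGamma}, by uniqueness of the semimartingale decomposition the local martingale part $\int_0^\cdot\big(g_\infty y\sigma_y+\sigma^\top\theta\big)\cdob{\rm d}Z$ and the absolutely continuous part $\int_0^\cdot\big[-(c+\delta B)+\kappa\cdob(g_\infty y\sigma_y+\sigma^\top\theta)\big]\,{\rm d}s$ must each vanish identically. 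Vanishing of the martingale part gives $\int_0^\cdot\big|g_\infty y\sigma_y+\sigma^\top\theta\big|^2{\rm d}s\equiv 0$ for its quadratic variation, so $g_\infty y\sigma_y+\sigma^\top\theta=0$ ${\rm d}t\otimes\P$-a.e.; inserting this into the drift yields $c+\delta B=0$ ${\rm d}t\otimes\P$-a.e., and since $\delta>0$ and $c,B\geq 0$ we get $c=B=0$ ${\rm d}t\otimes\P$-a.e.

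For part $(ii)$, where $\Gamma_\infty(w,\overbar{\mathbf{x}})>0$, I would invoke the optional sampling theorem for nonnegative supermartingales: $N(\cdot\vee\tau_+)$ is again a supermartingale, and on $\{\tau_+\leq t\}\in\mathcal F_{\tau_+}$ we have $\E\big[N(t\vee\tau_+)\mid\mathcal F_{\tau_+}\big]\leq N(\tau_+)=0$ (the last equality by path-continuity on $\{\tau_+<\infty\}$), so nonnegativity of $N$ gives $N(t)\,\ind_{\{\tau_+\leq t\}}=0$ $\P$-a.s.\ for each fixed $t$; letting $t$ run over $\Q$ and using continuity one concludes that $\P$-a.s.\ $N(t)=0$ for all $t\geq\tau_+$, that is $\ind_{(\tau_+,+\infty)}(t)\,\overline\Gamma_\infty(t)=0$ for every $t\geq 0$. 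The assertions on the controls then follow exactly as in part $(i)$, applied now to the semimartingale decomposition of $\ind_{[\tau_+,+\infty)}(\cdot)\,\overline\Gamma_\infty(\cdot)$, which is identically zero (its martingale part $\int_0^\cdot\ind_{(\tau_+,+\infty)}(s)(g_\infty y\sigma_y+\sigma^\top\theta)\cdob{\rm d}Z$ vanishes, forcing $\ind_{(\tau_+,+\infty)}(g_\infty y\sigma_y+\sigma^\top\theta)=0$ and then $\ind_{(\tau_+,+\infty)}c=\ind_{(\tau_+,+\infty)}B=0$ ${\rm d}t\otimes\P$-a.e.).

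The genuinely substantive point is the cancellation in the first step, which is what turns $\xi\,\overline\Gamma_\infty$ into a supermartingale; the remaining delicate spot is the passage from ``both components of the semimartingale decomposition vanish on the random interval $[\tau_+,+\infty)$'' to the pointwise ${\rm d}t\otimes\P$-a.e.\ statements on $c$, $B$ and $\theta$, which relies on uniqueness of the semimartingale decomposition together with the quadratic-variation argument. Everything else is the standard machinery of nonnegative supermartingales.
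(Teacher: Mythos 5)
Your argument is correct: the It\^o computation for $N(t)=\xi(t)\overline\Gamma_\infty(t)$ does produce the stated cancellations, admissibility gives $N\geq 0$, the Fatou/decomposition step makes $N$ an honest nonnegative supermartingale, and absorption at zero plus uniqueness of the semimartingale decomposition and the quadratic-variation argument yield all the claimed identities for $c$, $B$ and $\theta$. The route, however, is implemented differently from the paper's. The paper does not deflate by the state-price density under $\P$; it passes to the equivalent measures $\tilde\P_T$ of (\ref{Ptilde}) on each finite horizon, shows that $\overline\Gamma^0_\infty(t)=e^{-(r+\delta)t}\overline\Gamma_\infty(t)$ is a $\tilde\P_T$-supermartingale on $[0,T]$, applies optional sampling at $\tau_1=\tau_+\wedge T$ to get $\overline\Gamma^0_\infty\ind_{\{\tau_+<T\}}=0$, extracts the vanishing of $c$ and $B$ from a conditional-expectation inequality on the drift integral, then kills the remaining stochastic integral (a martingale that is identically zero) to get the condition on $\theta$, and finally transfers everything back to $\P$ using equivalence on $\calf_T$ and the arbitrariness of $T$. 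Your version buys a cleaner global-in-time statement: no family of measures, no final ``let $T$ vary'' step, and the hitting-time part is handled by the classical absorption property of nonnegative supermartingales rather than by hand; the paper's version buys the comfort of working only on compact intervals, where the supermartingale/optional-sampling technology needs no care about behaviour at infinity. One spot in your write-up worth tightening: in part $(ii)$ the object ``semimartingale decomposition of $\ind_{[\tau_+,+\infty)}(\cdot)\overline\Gamma_\infty(\cdot)$'' is best made precise by noting that, since a.s. $\overline\Gamma_\infty(t)=\overline\Gamma_\infty(t\wedge\tau_+)$ for all $t$, the process $\int_0^t\ind_{(\tau_+,+\infty)}(s)\,{\rm d}\overline\Gamma_\infty(s)$ vanishes identically, and it is to this stopped-increment process (whose decomposition is read off (\ref{dGamma})) that uniqueness of the decomposition and the quadratic-variation argument should be applied; with that rephrasing the step is fully rigorous and the rest stands as written.
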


%\section{The case $\gamma\in(0,1)$}

\subsection{Fundamental identity}

In this subsection we assume $\gamma\in(0,1)$;
first we state a key lemma (proved in the Appendix) to deal with the infinite horizon nature of the problem.
% to recall the definition of $\tilde v$ given in (\ref{def_v_tilde}).
\begin{lemma}
\label{lemma_mean_zero}
  Assume $(w,\overbar{\mathbf{x}})\in\calh_{++}$ and $\pi\in\Pi(w,\overbar{\mathbf{x}})$. Let $\gamma \in (0,1)$. Then for every $T>0$
  \begin{equation*}
    \E\left[e^{(\gamma-1)\left(r+\delta+\frac{\vert\kappa\vert^2}{2\gamma}\right)\left(T\wedge\tau_+\right)}\tilde{v}\left(\calx(T\wedge\tau_+;\pi)\right)\right]\leq \tilde{v}(w,\overbar{\mathbf{x}})\ .
  \end{equation*}
Moreover
\begin{equation*}
\lim_{T\to+\infty}
\E\left[e^{-(\rho+\delta)
\left(T\wedge\tau_+\right)}
\tilde{v}\left(\calx(T\wedge\tau_+;\pi)\right)\right]=0\ .
\end{equation*}
%Finally, if $\gamma >1$, \eqref{eq:mean_zero}
%holds for all  $\pi\in\Pi(w,\overbar{\mathbf{x}})$ such that
%$J(w,\overbar{\mathbf{x}};\pi)>-\infty$.
\end{lemma}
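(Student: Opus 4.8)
The plan is to reduce the statement to a one–dimensional computation for the scalar process $\overline\Gamma_\infty$ of \eqref{def_Gamma_proc}. By the definitions \eqref{def_v_tilde} and \eqref{def_Gamma_proc}, $\tilde v(\calx(t;\pi))=\frac{f_\infty^\gamma}{1-\gamma}\overline\Gamma_\infty(t)^{1-\gamma}$ for all $t$, where we read $\tilde v$ on $\partial\calh_+$ through the right–hand side of \eqref{def_v_tilde} (which vanishes there since $1-\gamma>0$), and where $f_\infty>0$, $\tilde v\geq0$ by Assumption \ref{Hyp_gamma}. Since $\overline\Gamma_\infty$ has continuous paths and $(w,\overbar{\mathbf{x}})\in\calh_{++}$, we have $\overline\Gamma_\infty(t)>0$ on $[0,\tau_+)$ and $\overline\Gamma_\infty(\tau_+)=0$ on $\{\tau_+<+\infty\}$, with $\tau_+$ as in \eqref{eq:tau_+}; set $\beta:=(1-\gamma)\bigl(r+\delta+\tfrac{|\kappa|^2}{2\gamma}\bigr)$, so $\beta>0$ and, again by Assumption \ref{Hyp_gamma}, $\rho+\delta-\beta>0$. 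Writing $\psi(t):=g_\infty y(t)\sigma_y+\sigma^\top\theta(t)$, \eqref{dGamma} reads ${\rm d}\overline\Gamma_\infty(t)=\bigl[(r+\delta)\overline\Gamma_\infty(t)-c(t)-\delta B(t)+\psi(t)\cdob\kappa\bigr]{\rm d}t+\psi(t)\cdob{\rm d}Z(t)$. Applying the classical Itô formula to $e^{-\beta t}\overline\Gamma_\infty(t)^{1-\gamma}$ on $[0,\tau_+)$, using the value of $\beta$ to cancel the $(r+\delta)$–term and completing the square in $\psi$, I expect to obtain
\begin{multline*}
{\rm d}\!\left(e^{-\beta t}\overline\Gamma_\infty(t)^{1-\gamma}\right)
=-(1-\gamma)e^{-\beta t}\!\left[\overline\Gamma_\infty(t)^{-\gamma}\bigl(c(t)+\delta B(t)\bigr)
+\tfrac{\gamma}{2}\overline\Gamma_\infty(t)^{-\gamma-1}\Bigl|\psi(t)-\tfrac{\overline\Gamma_\infty(t)}{\gamma}\kappa\Bigr|^2\right]{\rm d}t\\
+(1-\gamma)e^{-\beta t}\overline\Gamma_\infty(t)^{-\gamma}\psi(t)\cdob{\rm d}Z(t)\ .
\end{multline*}
Since $\gamma\in(0,1)$, $c,B\geq0$ and $\overline\Gamma_\infty>0$ on $[0,\tau_+)$, the drift is $\leq0$, so $e^{-\beta t}\overline\Gamma_\infty(t)^{1-\gamma}$ is a nonnegative local supermartingale on $[0,\tau_+)$.

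Next I would localise: for $n\in\N$ set $\sigma_n:=\inf\{t\geq0:\overline\Gamma_\infty(t)\leq 1/n\}\wedge n$, intersected further with a sequence of stopping times that turns the stochastic integral above into a true martingale — legitimate because on $[0,\sigma_n]$ one has $\overline\Gamma_\infty\geq1/n$, while the $L^1$–$L^2$ integrability built into $\Pi^0$ and finite–horizon moment bounds on $y$ (cf. the representation in the Remark following Assumption \ref{Hyp_K}) control $\psi$. Taking expectations on $[0,T\wedge\sigma_n]$ and discarding the nonpositive drift and the martingale term gives $\E\!\left[e^{-\beta(T\wedge\sigma_n)}\overline\Gamma_\infty(T\wedge\sigma_n)^{1-\gamma}\right]\leq\overline\Gamma_\infty(0)^{1-\gamma}$. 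Because the paths of $\overline\Gamma_\infty$ are continuous and $\overline\Gamma_\infty>0$ on $[0,\tau_+)$, we have $\sigma_n\uparrow\tau_+$ (with $\sigma_n\uparrow+\infty$ on $\{\tau_+=+\infty\}$), hence $T\wedge\sigma_n\uparrow T\wedge\tau_+$ and $e^{-\beta(T\wedge\sigma_n)}\overline\Gamma_\infty(T\wedge\sigma_n)^{1-\gamma}\to e^{-\beta(T\wedge\tau_+)}\overline\Gamma_\infty(T\wedge\tau_+)^{1-\gamma}$ almost surely, the limit being $0$ on $\{\tau_+\leq T\}$. Fatou's lemma, applicable since the integrand is nonnegative, then yields $\E\!\left[e^{-\beta(T\wedge\tau_+)}\overline\Gamma_\infty(T\wedge\tau_+)^{1-\gamma}\right]\leq\overline\Gamma_\infty(0)^{1-\gamma}$; multiplying by $f_\infty^\gamma/(1-\gamma)>0$ gives precisely the first assertion.

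Finally, for the limit I would split according to $\{\tau_+\leq T\}$ and $\{\tau_+>T\}$: on the first event $T\wedge\tau_+=\tau_+$ and $\tilde v(\calx(\tau_+;\pi))=0$, so that contribution vanishes, while on the second $T\wedge\tau_+=T$ and, using $e^{-(\rho+\delta)T}=e^{-(\rho+\delta-\beta)T}e^{-\beta T}$ together with $\tilde v\geq0$,
\[
0\leq\E\!\left[e^{-(\rho+\delta)(T\wedge\tau_+)}\tilde v(\calx(T\wedge\tau_+;\pi))\right]
=e^{-(\rho+\delta-\beta)T}\,\E\!\left[e^{-\beta(T\wedge\tau_+)}\tilde v(\calx(T\wedge\tau_+;\pi))\,\ind_{\{\tau_+>T\}}\right]
\leq e^{-(\rho+\delta-\beta)T}\,\tilde v(w,\overbar{\mathbf{x}})\ ,
\]
the last inequality by the first assertion. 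Since $\rho+\delta-\beta>0$, letting $T\to+\infty$ proves the second claim. The only genuinely delicate point is the passage to the limit at $\tau_+$ in the second paragraph: there $\overline\Gamma_\infty(t)^{-\gamma-1}$ explodes while $e^{-\beta t}\overline\Gamma_\infty(t)^{1-\gamma}$ stays bounded, and this is handled by using only the sign of the drift, the nonnegativity of the integrand (Fatou), and the path continuity of $\overline\Gamma_\infty$; verifying that the localising sequence genuinely makes the stochastic integral a martingale is routine but should not be skipped.
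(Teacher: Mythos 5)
Your argument is correct and follows essentially the same route as the paper's proof: Itô's formula applied to $e^{-\beta t}\overline\Gamma_\infty^{1-\gamma}(t)$ with $\beta=(1-\gamma)\bigl(r+\delta+\tfrac{\vert\kappa\vert^2}{2\gamma}\bigr)$, nonpositivity of the drift, localization at the level $1/N$, Fatou's lemma as the localizing times increase to $\tau_+$, and then the second claim deduced from the first via $\overline\Gamma_\infty(\tau_+)=0$ and Assumption \ref{Hyp_gamma}. The only cosmetic difference is your extra $\wedge n$ and auxiliary localization of the stochastic integral, which plays the same role as the paper's integrability remark for the stopped process.
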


The key step to the main result of our paper is provided by the following result.
\begin{proposition}
\label{prop:fundamental}
Assume $(w,\overbar{\mathbf{x}})\in\calh_{++}$ and $\pi\in\Pi(w,\overbar{\mathbf{x}})$. Then
% for every $T>0$
\begin{equation}
\label{eq:vtilde_J}
\begin{aligned}
\tilde{v}\left(w,\overbar{\mathbf{x}}\right)&
=J\left(w,\overbar{\mathbf{x}},\pi\right)\\
&\phantom{=}+\E\int_0^{\tau_+}e^{-(\rho+\delta)s}\left\{\H_{\text{max}}
\left(y(s),\partial_w\tilde{v}\left(\calx(s;\pi)\right),
\partial_{ww}\tilde{v}\left(\calx(s;\pi)\right),
\partial_{wx^{(1)}_0}\tilde{v}\left(\calx(s;\pi)\right)\right)\right.\\
&\phantom{=}\left.-\H_c\left(y(s),\partial_w\tilde{v}\left(\calx(s;\pi)\right),\partial_{ww}\tilde{v}\left(\calx(s;\pi)\right),\partial_{wx^{(1)}_0}\tilde{v}\left(\calx(s;\pi)\right),\pi\right)\right\}{\rm d}s\ .
\end{aligned}
\end{equation}
\end{proposition}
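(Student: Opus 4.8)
## Proof proposal

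The plan is to apply an infinite-dimensional Itô formula to the process $s\mapsto e^{-(\rho+\delta)s}\tilde v(\calx(s;\pi))$ on the stochastic interval $[0,\tau_+)$, then take expectations, and finally recognize the resulting drift term as the difference between $\H_{\max}$ evaluated along the optimal feedback and $\H_c$ evaluated along the actual control $\pi$. More precisely, I would first note that $\tilde v$ is a classical solution of the HJB equation by Theorem \ref{thm_sol1}, so it has exactly the regularity needed to apply the Itô formula of \cite[Proposition 1.165]{FABBRI_GOZZI_SWIECH_BOOK} (the mild-solution Itô formula that only requires $\partial_{\overbar{\mathbf{x}}}\tilde v\in\cald(A^\ast)$, which holds by Lemma \ref{lemma_Dast}). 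Applying it to $\tilde v(\calx(t;\pi))$ and using that $\tilde v$ depends on the state only through $\Gamma_\infty$ — so that the dynamics \eqref{dGamma} of $\overline\Gamma_\infty$ can be plugged directly in — one gets, for $t<\tau_+$,
\begin{equation*}
  \mathrm{d}\tilde v(\calx(t;\pi)) = \Bigl[\mathcal{L}^\pi\tilde v(\calx(t;\pi))\Bigr]\mathrm{d}t + (\text{martingale term})\,\mathrm{d}Z(t),
\end{equation*}
where $\mathcal{L}^\pi$ is the generator associated with the controlled dynamics; concretely the drift is $\langle\calb(\calx(t),\pi(t)),\nabla\tilde v\rangle_\calh+\tfrac12\mathrm{Tr}[\nabla^2\tilde v\,\cals\cals^\ast]$, which by the definition of $\H_c$ equals $\H_0((w,\overbar{\mathbf{x}}),\nabla\tilde v,\partial^2_{x^{(1)}_0x^{(1)}_0}\tilde v)+\H_c(y,\partial_w\tilde v,\partial^2_{ww}\tilde v,\partial^2_{wx^{(1)}_0}\tilde v,\pi)-U(\pi)$ evaluated at $\calx(t;\pi)$.

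Next I would use the HJB equation itself: since $\tilde v$ solves $(\rho+\delta)\tilde v=\H_0+\H_{\max}$ pointwise on $\calh_{++}$, I can substitute $\H_0(\calx(t;\pi))=(\rho+\delta)\tilde v(\calx(t;\pi))-\H_{\max}(y(t),\partial_w\tilde v,\partial^2_{ww}\tilde v,\partial^2_{wx^{(1)}_0}\tilde v)$ into the drift. After multiplying by the discount factor $e^{-(\rho+\delta)t}$ and applying the product rule, the $(\rho+\delta)\tilde v$ terms cancel and one is left with
\begin{equation*}
  \mathrm{d}\Bigl(e^{-(\rho+\delta)t}\tilde v(\calx(t;\pi))\Bigr) = e^{-(\rho+\delta)t}\Bigl[-U(\pi(t))+\H_c(\cdots,\pi(t))-\H_{\max}(\cdots)\Bigr]\mathrm{d}t+\text{mart.}
\end{equation*}
Integrating from $0$ to $T\wedge\tau_+$, taking expectations (the stochastic integral has zero mean — one should check the integrand is in the appropriate $L^2$ space, or localize with a further stopping time and pass to the limit), and rearranging gives
\begin{equation*}
  \tilde v(w,\overbar{\mathbf{x}}) = \E\!\int_0^{T\wedge\tau_+}\!\! e^{-(\rho+\delta)s}U(\pi(s))\,\mathrm{d}s + \E\!\int_0^{T\wedge\tau_+}\!\! e^{-(\rho+\delta)s}\bigl[\H_{\max}(\cdots)-\H_c(\cdots,\pi)\bigr]\mathrm{d}s + \E\Bigl[e^{-(\rho+\delta)(T\wedge\tau_+)}\tilde v(\calx(T\wedge\tau_+;\pi))\Bigr].
\end{equation*}
Finally I would let $T\to+\infty$: the last term vanishes by the second assertion of Lemma \ref{lemma_mean_zero}, the first term converges to $J(w,\overbar{\mathbf{x}};\pi)$ by monotone/dominated convergence (using $U\geq 0$ since $\gamma\in(0,1)$, and the admissibility condition $c,B\in L^1$), and by Proposition \ref{prop_tau}(ii) the integrand $\H_{\max}-\H_c$ of the second term is supported on $[0,\tau_+]$ anyway, so extending the integral to $\tau_+$ is harmless. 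This yields exactly \eqref{eq:vtilde_J}.

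The main obstacle I anticipate is the justification of the Itô formula and the vanishing of the martingale term: the process $\calx$ lives only in $\calh$ as a mild solution and is almost surely never in $D(\calb)$, so one must invoke the version of Itô's formula tailored to this setting (as is done right before \eqref{ito_l_infty} for the simpler functional $\langle\overbar{\mathbf{l}}_\infty,\overbar{\mathbf{Y}}\rangle$), and one must verify its hypotheses for the nonlinear function $\tilde v$ — here the fact that $\tilde v$ is a \emph{power} of the linear functional $\Gamma_\infty$, together with the established derivative formulas in the proof of Theorem \ref{thm_sol1}, should make this reduce to the one-dimensional computation for $\overline\Gamma_\infty^{1-\gamma}$. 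A secondary technical point is controlling the local martingale: near $\tau_+$ the denominator $\overline\Gamma_\infty$ appearing in $\theta^\ast$ could misbehave, but since we stop at $\tau_+$ and work on $[0,T\wedge\tau_+]$ where $\Gamma_\infty$ stays strictly positive up to the stopping time, a standard localization argument combined with the integrability built into $\Pi^0$ handles it. Everything else — the cancellations using \eqref{comb_0} and \eqref{Astar_l}, the identification of the drift with $\H_c$, the passage to the limit — is routine bookkeeping of the kind already carried out in the proof of Theorem \ref{thm_sol1}.
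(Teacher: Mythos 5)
Your proposal is correct and follows essentially the same route as the paper: apply the infinite-dimensional It\^o formula (justified via Lemma \ref{lemma_Dast}, and reducing to the one-dimensional computation for $\overline\Gamma_\infty^{1-\gamma}$ through \eqref{dGamma}) to the discounted process $e^{-(\rho+\delta)s}\tilde v(\calx(s;\pi))$ on a localizing sequence of stopping times, identify the drift as $-\H_{\text{max}}+\H_c-U$ using the HJB equation and the cancellations \eqref{Astar_l}, \eqref{comb_0}, take expectations, and pass to the limit in the localization and in $T$ via monotone convergence and Lemma \ref{lemma_mean_zero}. The only small imprecision is in the last step: Proposition \ref{prop_tau} is needed to identify $\E\int_0^{\tau_+}e^{-(\rho+\delta)s}U(\pi(s))\,{\rm d}s$ with $J(w,\overbar{\mathbf{x}};\pi)$, which is defined as an integral over $[0,+\infty)$ (since $c=B=0$ after $\tau_+$), rather than for the $\H_{\text{max}}-\H_c$ term, which is already cut off at $\tau_+$.
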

Identity (\ref{eq:vtilde_J}) is often called the \emph{fundamental identity}.
\begin{proof}
Set
  \begin{equation}
  \label{tau_N}
  \tau_N:=\inf\left\{t\geq 0 \colon \overline\Gamma_\infty(t)\leq\frac1N\right\}\ .
\end{equation}
and choose $N$ large enough so that $\tau_N>0$ almost surely. Ito formula applied on $\left[0,\tau_N\right]$ to $e^{-(\rho+\delta)s}f^\gamma_\infty\overline\Gamma_\infty^{1-\gamma}(s)$ yields, by (\ref{dGamma}),
  \begin{align*}
    \frac{1}{1-\gamma}&{\rm d}\left(e^{-(\rho+\delta)s}f^\gamma_\infty
    \overline\Gamma_\infty^{1-\gamma}(s)\right)
    =e^{-(\rho+\delta)s}\left\{-(\rho+\delta)f_\infty^\gamma
    \overline\Gamma_\infty^{1-\gamma}(s){\rm d}s+f_\infty^\gamma(1-\gamma)\overline\Gamma_\infty^{-\gamma}(s){\rm d}\overline\Gamma_\infty(s)\phantom{\frac{1}{2}}\right.
    \\
    &\phantom{e^{-(\rho+\delta)s}\{-(\rho+\delta)f_\infty^\gamma
    \overline\Gamma_\infty^{1-\gamma}}\left.
    -\frac12\gamma(1-\gamma)f_\infty^\gamma
    \overline\Gamma_\infty(s)^{-\gamma-1}{\rm d}\left[\overline\Gamma_\infty\right](s)\right\}
    \\
    &=e^{-(\rho+\delta)s}\left\{-(\rho+\delta)
    \tilde{v}\left(\calx(s;\pi)\right){\rm d}s+\partial_w\tilde{v}\left(\calx(s;\pi)\right)\left[(r+\delta)
    \overline\Gamma_\infty(s)+g_\infty y(s)\sigma_y\cdob\kappa\right]{\rm d}s\phantom{\frac12}\right.
    \\
    &\phantom{=}+\frac12\partial^2_{x^{(1)}_0 x^{(1)}_0}\left\vert g_\infty y(s)\sigma_y\right\vert^2{\rm d}s+f_\infty^\gamma\overline\Gamma_\infty^{-\gamma}(s)\left(g_\infty y(s)\sigma_y+\sigma^\top\theta(s)\right)\cdob{\rm d} Z(s)
    \\
   &\phantom{=}\left.+\partial_w\tilde{v}\left(\calx(s;\pi)\right)
   \left[-c(s)-\delta B(s)+\theta(s)\cdob\sigma\kappa
   -\frac12\gamma\overline\Gamma_\infty^{-1}(s)
   \left(\theta(s)^\top\sigma\sigma^\top\theta(s)+2g_\infty y(s) \sigma_y\cdob\sigma^\top\theta(s)\right)\right]{\rm d}s\right\}
   \\
   &=e^{-(\rho+\delta)s}\left\{-(\rho+\delta)
   \tilde{v}\left(\calx(s;\pi)\right){\rm d}s
   +\partial_w\tilde{v}\left(\calx(s;\pi)\right)
   \left[(r+\delta)W(s;\pi)+(r+\delta)\left
   \langle\overbar{\mathbf{l}}_\infty,\overbar{\mathbf{Y}}(s)
   \right\rangle_{\calm_2^2}\right]{\rm d}s\phantom{\frac12}\right.
   \\
   &\phantom{=}+\partial_w\tilde{v}\left(\calx(s;\pi)\right)\left[-y(s)+y(s)+g_\infty y(s)\sigma_y\cdob\kappa\right]{\rm d}s
   \\
   &\phantom{=}+\frac12\partial^2_{x^{(1)}_0 x^{(1)}_0}\tilde{v}\left(\calx(s;\pi)\right)\left\vert y(s)\sigma_y\right\vert^2{\rm d}s+f_\infty^\gamma\overline\Gamma_\infty^{-\gamma}(s)\left(g_\infty y(s)\sigma_y+\sigma^\top\theta(s)\right)\cdob{\rm d} Z(s)
   \\
   &\phantom{=}\left.+\partial_w\tilde{v}\left(\calx(s;\pi)\right)\left[-c(s)-\delta B(s)+\theta(s)\cdob\sigma\kappa-\frac12\gamma\overline\Gamma_\infty^{-1}(s)\left(\theta(s)^\top\sigma\sigma^\top\theta(s)+2g_\infty y(s)\sigma_y\cdob\sigma^\top\theta(s)\right)\right]{\rm d}s\right\}
   \\
   &=e^{-(\rho+\delta)s}\left\{-(\rho+\delta)\tilde{v}\left(\calx(s;\pi)\right){\rm d}s+(r+\delta)W(s;\pi)\partial_w\tilde{v}\left(\calx(s;\pi)\right){\rm d}s+\phantom{\frac12}\right.
   \\
   &\phantom{=}+\left\langle\overbar{\mathbf{Y}},A^\ast\partial_{\overbar{\mathbf{x}}}\tilde{v}\left(\calx(s;\pi)\right)\right\rangle{\rm d}s+y(s)\partial_w\tilde{v}\left(\calx(s;\pi)\right){\rm d}s
   \\
    &\phantom{=}+\frac12\partial^2_{x^{(1)}_0 x^{(1)}_0}\tilde{v}\left(\calx(s;\pi)\right)\left\vert y(s)\sigma_y\right\vert^2{\rm d}s+f_\infty^\gamma\overline\Gamma_\infty^{-\gamma}(s)\left(g_\infty y(s)\sigma_y+\sigma^\top\theta(s)\right)\cdob{\rm d} Z(s)\\
                      &\phantom{=}+\partial_w\tilde{v}\left(\calx(s;\pi)\right)\left[-c(s)-\delta B(s)+\theta(s)\cdob(\mu-r\1)\right]+\partial^2_{w x^{(1)}}\tilde{v}\left(\calx(s;\pi)\right)y(s)\theta(s)\cdob\sigma\sigma_y{\rm d}s\\
                      &\phantom{=}\left.+\frac12\partial^2_{ww}\tilde{v}\left(\calx(s;\pi)\right)\theta(s)^\top\sigma\sigma^\top\theta(s){\rm ds}\right\}\\
                      &=e^{-(\rho+\delta)s}\left\{-\H_{\text{max}}\left(y(s),\partial_w\tilde{v}\left(\calx(s;\pi)\right),\partial_{ww}\tilde{v}\left(\calx(s;\pi)\right),\partial_{wx^{(1)}_0}\tilde{v}\left(\calx(s;\pi)\right)\right){\rm d}s\right.\\
                      &\phantom{=}+\H_c\left(y(s),\partial_w\tilde{v}\left(\calx(s;\pi)\right),\partial_{ww}\tilde{v}\left(\calx(s;\pi)\right),\partial_{wx^{(1)}_0}\tilde{v}\left(\calx(s;\pi)\right),\pi\right){\rm d}s\\
                      &\phantom{=}\left.+\left[\frac{c(s)^{1-\gamma}}{1-\gamma}+\delta\frac{(kB)^{1-\gamma}}{1-\gamma}\right]{\rm d}s+f_\infty^\gamma\overline\Gamma_\infty^{-\gamma}(s)\left(g_\infty y(s)\sigma_y+\sigma^\top\theta(s)\right)\cdob{\rm d} Z(s)\right\}
  \end{align*}
  where to obtain the last three equalities we used the definition of $\overline\Gamma_\infty$ as in (\ref{gammainfinity})-\eqref{def_Gamma_proc} first, then (\ref{DEF_KAPPA}), (\ref{Astar_l}) and (\ref{comb_0}) and finally the definitions of $\H_0$, $\H_{\text{max}}$ and $\H_c$, together with the derivatives of $\tilde{v}$ as computed in the proof of Theorem \ref{thm_sol1}.\\
 For $T\geq 0$ we now integrate on $\left[0,T\wedge\tau_N\right]$ and take expectation. The stochastic integral obtained integrating the last term in the chain of equalities above is a martingale. In fact, it is easy to verify that the stochastic integral is a local martingale w.r.t. the sequence of stopping times $\tau_N$ as defined in \eqref{tau_N}. We find
  \begin{align*}
    \E\Big[e^{-(\rho+\delta)\left(T\wedge \tau_N\right)}&\tilde{v}\left(\calx\left(T\wedge\tau_N;\pi\right)\right)\Big]-\tilde{v}\left(w,\overbar{\mathbf{x}}\right)\\
                                                       &=-\E\int_0^{T\wedge\tau_N}e^{-(\rho+\delta)s}\left\{\H_{\text{max}}\left(y(s),\partial_w\tilde{v}\left(\calx(s;\pi)\right),\partial_{ww}\tilde{v}\left(\calx(s;\pi)\right),\partial_{wx^{(1)}_0}\tilde{v}\left(\calx(s;\pi)\right)\right)\right.\\
                      &\phantom{=}\left.-\H_c\left(y(s),\partial_w\tilde{v}\left(\calx(s;\pi)\right),\partial_{ww}\tilde{v}\left(\calx(s;\pi)\right),\partial_{wx^{(1)}_0}\tilde{v}\left(\calx(s;\pi)\right),\pi\right)\right\}{\rm d}s\\
                      &\phantom{=}-\E\int_0^{T\wedge\tau_N}e^{-(\rho+\delta)s}\left[\frac{c(s)^{1-\gamma}}{1-\gamma}+\delta\frac{(kB)^{1-\gamma}}{1-\gamma}\right]{\rm d}s\ .
  \end{align*}
Taking now the limit $N\to+\infty$ we can use the theorem on the first expectation and the monotone convergence theorem to the terms on the right hand side (as the integrands are nonnegative almost surely) to obtain
\begin{align*}
  \tilde{v}\left(w,\overbar{\mathbf{x}}\right)&=\E\int_0^{T\wedge\tau_+}e^{-(\rho+\delta)s}\left[\frac{c(s)^{1-\gamma}}{1-\gamma}+\delta\frac{(kB)^{1-\gamma}}{1-\gamma}\right]{\rm d}s\\
                                                       &\phantom{=}+\E\int_0^{T\wedge\tau_+}e^{-(\rho+\delta)s}\left\{\H_{\text{max}}\left(y(s),\partial_w\tilde{v}\left(\calx(s;\pi)\right),\partial_{ww}\tilde{v}\left(\calx(s;\pi)\right),\partial_{wx^{(1)}_0}\tilde{v}\left(\calx(s;\pi)\right)\right)\right.\\
                      &\phantom{=}\left.-\H_c\left(y(s),\partial_w\tilde{v}\left(\calx(s;\pi)\right),\partial_{ww}\tilde{v}\left(\calx(s;\pi)\right),\partial_{wx^{(1)}_0}\tilde{v}\left(\calx(s;\pi)\right),\pi\right)\right\}{\rm d}s\\
                                              &\phantom{=}+\E\Big[e^{-(\rho+\delta)\left(T\wedge \tau_+\right)}\tilde{v}\left(\calx\left(T\wedge\tau_N;\pi\right)\right)\Big].
\end{align*}
We finally take the limit $T\to+\infty$ and use the monotone convergence Theorem and Lemma \ref{lemma_mean_zero} to obtain
\begin{align*}
  \tilde{v}\left(w,\overbar{\mathbf{x}}\right)&=\E\int_0^{\tau_+}e^{-(\rho+\delta)s}\left[\frac{c(s)^{1-\gamma}}{1-\gamma}+\delta\frac{(kB)^{1-\gamma}}{1-\gamma}\right]{\rm d}s\\
                                                       &\phantom{=}+\E\int_0^{\tau_+}e^{-(\rho+\delta)s}\left\{\H_{\text{max}}\left(y(s),\partial_w\tilde{v}\left(\calx(s;\pi)\right),\partial_{ww}\tilde{v}\left(\calx(s;\pi)\right),\partial_{wx^{(1)}_0}\tilde{v}\left(\calx(s;\pi)\right)\right)\right.\\
                      &\phantom{=}\left.-\H_c\left(y(s),\partial_w\tilde{v}\left(\calx(s;\pi)\right),\partial_{ww}\tilde{v}\left(\calx(s;\pi)\right),\partial_{wx^{(1)}_0}\tilde{v}\left(\calx(s;\pi)\right),\pi\right)\right\}{\rm d}s\ ,
\end{align*}
where the right hand side is finite because the left hand side is. To conclude just notice that by definition of $\tau_+$ and Proposition \ref{prop_tau} we have
\begin{equation*}
  \E\int_0^{\tau_+}e^{-(\rho+\delta)s}\left[\frac{c(s)^{1-\gamma}}{1-\gamma}+\delta\frac{(kB)^{1-\gamma}}{1-\gamma}\right]{\rm d}s=J\left(w,\overbar{\mathbf{x}},\pi\right)\ .
\end{equation*}

\end{proof}

We introduce the \emph{value function} $V\colon\calh\to\overbar{\R}$ defined as
\begin{equation*}
  V\left(w,\overbar{\mathbf{x}}\right)\colon=\sup_{\pi\in\Pi(w,\overbar{\mathbf{x}})}J\left(w,\overbar{\mathbf{x}};\pi\right)\ ;
\end{equation*}
note that we allow at this point $V$ to take the values $+\infty$ or $-\infty$.
\begin{corollary}
  \label{coro:V_finite}
  The value function is finite on $\calh_+$ and $V\left(w,\overbar{\mathbf{x}}\right)\leq\tilde{v}\left(w,\overbar{\mathbf{x}}\right)$ for every $\left(w,\overbar{\mathbf{x}}\right)\in\calh_+$.
\end{corollary}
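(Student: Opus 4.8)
The plan is to obtain both claims directly from the fundamental identity of Proposition~\ref{prop:fundamental}, complemented by Proposition~\ref{prop_tau} for points on the boundary. Accordingly I would split $\calh_+$ into the interior $\calh_{++}$ and the boundary set $\partial\calh_+=\calh_+\setminus\calh_{++}=\{(w,\overbar{\mathbf{x}})\colon\Gamma_\infty(w,\overbar{\mathbf{x}})=0\}$, extending $\tilde v$ to all of $\calh_+$ by formula~(\ref{def_v_tilde}), which is legitimate since $1-\gamma>0$.

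\emph{Interior.} Let $(w,\overbar{\mathbf{x}})\in\calh_{++}$ and fix $\pi\in\Pi(w,\overbar{\mathbf{x}})$. On $\calh_{++}$ we have $\partial_w\tilde v>0$ and $\partial^2_{ww}\tilde v<0$ (from the derivatives computed in the proof of Theorem~\ref{thm_sol1}), so along the trajectory $\calx(\cdot;\pi)$, which lies in $\calh_{++}$ on $[0,\tau_+)$, we are in case $\mathbf{(i)}$ of the discussion following~(\ref{Hmax}); hence $\H_{\text{max}}(\cdots)$ is there a \emph{finite} supremum of $\H_c(\cdots,\cdot)$ and in particular dominates $\H_c(\cdots,\pi)$, which makes the integrand in the last two lines of~(\ref{eq:vtilde_J}) nonnegative ${\rm d}s\otimes\P$-a.e. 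From~(\ref{eq:vtilde_J}) we then get $\tilde v(w,\overbar{\mathbf{x}})\geq J(w,\overbar{\mathbf{x}};\pi)$; taking the supremum over $\pi\in\Pi(w,\overbar{\mathbf{x}})$ gives $\tilde v(w,\overbar{\mathbf{x}})\geq V(w,\overbar{\mathbf{x}})$. Finally, because $\gamma\in(0,1)$ the integrand defining $J$ is nonnegative, so $J(w,\overbar{\mathbf{x}};\pi)\geq0$ for every $\pi\in\Pi(w,\overbar{\mathbf{x}})$, which is a nonempty set (it contains, e.g., the strategy $c=B=0$, $\theta(t)=-g_\infty y(t)(\sigma^\top)^{-1}\sigma_y$, for which $\overline\Gamma_\infty$ stays nonnegative by~(\ref{dGamma})); hence $0\leq V(w,\overbar{\mathbf{x}})\leq\tilde v(w,\overbar{\mathbf{x}})<+\infty$, the finiteness of $\tilde v$ on $\calh_{++}$ being guaranteed by Theorem~\ref{thm_sol1} together with $f_\infty>0$, i.e. Assumption~\ref{Hyp_gamma}.

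\emph{Boundary and conclusion.} If $\Gamma_\infty(w,\overbar{\mathbf{x}})=0$, then by Proposition~\ref{prop_tau}-$(i)$ every $\pi\in\Pi(w,\overbar{\mathbf{x}})$ has $c=B=0$ for ${\rm d}t\otimes\P$-a.e. $(t,\omega)$, whence $J(w,\overbar{\mathbf{x}};\pi)=0$ and $V(w,\overbar{\mathbf{x}})=0$; on the other hand $\tilde v(w,\overbar{\mathbf{x}})=\tfrac{f_\infty^\gamma}{1-\gamma}\,0^{1-\gamma}=0$ because $1-\gamma>0$. So $V=\tilde v=0$ on $\partial\calh_+$, and combining with the interior case, $V$ is finite on $\calh_+$ and $V(w,\overbar{\mathbf{x}})\leq\tilde v(w,\overbar{\mathbf{x}})$ for every $(w,\overbar{\mathbf{x}})\in\calh_+$.

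The only delicate point I foresee is the justification that the integrand in~(\ref{eq:vtilde_J}) is nonnegative (and that the supremum defining $V$ is taken over the nonempty set $\Pi(w,\overbar{\mathbf{x}})$ rather than the empty one), since a priori $\H_{\text{max}}$ could be $+\infty$; but this is immediate once one records that on $\calh_{++}$ the derivatives of the explicit function $\tilde v$ place us in case $\mathbf{(i)}$, where $\H_{\text{max}}$ is the genuine finite maximum of $\H_c$, so that $\H_{\text{max}}(\cdots)-\H_c(\cdots,\pi)$ is a legitimate nonnegative quantity. Everything else is bookkeeping.
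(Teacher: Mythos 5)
Your argument is correct and takes essentially the paper's own route: the inequality $\tilde v\geq J$ comes from the fundamental identity (\ref{eq:vtilde_J}) together with the nonnegativity of $\H_{\text{max}}-\H_c$ along admissible trajectories, and the claim follows by taking the supremum over $\Pi(w,\overbar{\mathbf{x}})$. Your explicit handling of the boundary $\{\Gamma_\infty=0\}$ via Proposition \ref{prop_tau} and your remark on the nonemptiness of the admissible set are sound refinements of details the paper's one-line proof leaves implicit (the fundamental identity being stated only on $\calh_{++}$), but they do not change the approach.
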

\begin{proof}
  By definition of $\H_{\text{max}}$ we have that the integrand in (\ref{eq:vtilde_J}) is always nonegative, therefore $\tilde{v}\left(w,\overbar{\mathbf{x}}\right)\geq J\left(w,\overbar{\mathbf{x}},\pi\right)$ for every $\left(w,\overbar{\mathbf{x}}\right)$ and every $\pi\in\Pi\left(w,\overbar{\mathbf{x}}\right)$, so that the claim follows from the definition of the value function.
\end{proof}

\begin{remark}\label{rm:newuniqueness}
%{\color{red}Copy-pasted from old paper, should be changed}
Observe that, from the proof the above Proposition \ref{prop:fundamental}, we easily obtain that the fundamental identity (\ref{eq:vtilde_J}) holds when, in place of $\bar v$, we put any classical solution $v$ of the Hamilton-Jacobi-Bellman equation (\ref{HJB}) which satisfies ($\tau$ being the first exit time from $\calh_{++}$),
\begin{equation}\label{eq:trasvnew}
\lim_{T\to + \infty}\E\left[e^{-(\rho + \delta) (T \wedge \tau_+) }
v\big(W_{\pi}(T \wedge \tau_+), X(T \wedge \tau)\big) \right]=0.
\end{equation}
Hence the same observation made in \cite[Remark 4.13]{BGP} still hold in this case.
\end{remark}

We actually aim to show that $V=\tilde{v}$ on $\calh_+$. In doing so we will also provide optimal feedback strategies.
\begin{definition}\label{DEF_ADMISSIBLE_FEEDBACK STRATEGY_INF_RET}
Fix $(w,\overbar{\mathbf{x}}) \in \calh_+$. A strategy $\tilde \pi:=\left(\tilde c,\tilde B, \tilde \theta \right)$
is called an \textit{optimal strategy} if $\tilde\pi \in \Pi\left(w,\overbar{\mathbf{x}}\right)$ and
\begin{align}
V\left(w,\overbar{\mathbf{x}}\right)=J\left(w,\overbar{\mathbf{x}},\tilde \pi\right)\ ,
\end{align}
that is, the supremum in Problem \ref{pbl} is achieved at $\tilde\pi$\ .
\end{definition}

\begin{definition}
We say that a function
$\left( \bm{\theta},\mathbf{c}, \mathbf{B}\right): \calh_+ \longrightarrow \mathbb R^n\times \mathbb R_{+}\times \mathbb R_+$
is an \textit{optimal feedback map} if for every $(w,\overbar{\mathbf{x}})\in \calh_+$
the closed loop equation
\begin{equation*}
\begin{cases}
{\rm d}W(t) =  \left[ (r+\delta) W(t)+\bm{\theta}\left( W(t), \overbar{\mathbf{Y}}(t)\right)\cdob(\mu-r\1)  +  P_{1,0}\overbar{\mathbf{Y}}(t) - \mathbf{c}\left( W(t), \overbar{\mathbf{Y}}(t)\right)\right.\\
\phantom{{\rm d}W(t) =}  \left. - \delta\mathbf{B}\left( W(t), \overbar{\mathbf{Y}}(t)\right)\right] {\rm d}t+  \bm{\theta} \left( W(t), \overbar{\mathbf{Y}}(t)\right)\cdob \sigma {\rm d}Z(t),\\
  {\rm d}\overbar{\mathbf{Y}}(t)=A\overbar{\mathbf{Y}}(t){\rm d}t+F\left(\overbar{\mathbf{Y}}(t)\right){\rm d}Z(t),\\
  \left(W(0),\overbar{\mathbf{Y}}(0)\right)=\left(w,\overbar{\mathbf{x}}\right)
\end{cases}
\end{equation*}
has a unique solution $(W^\ast,\overbar{\mathbf{Y}})=:\calx^\ast$, and the associated control strategy $\left(\tilde c, \tilde B, \tilde \theta \right)$
\begin{align}
\begin{split}
\left\{\begin{array}{ll}
\tilde c(t)&:=  \mathbf{c}\left(W^\ast(t),\overbar{\mathbf{Y}}(t)\right) {,} \\
\tilde B(t)&:=\mathbf{B}\left( W^\ast(t),\overbar{\mathbf{Y}}(t)\right){,}\\
\tilde \theta(t)&:=\bm{\theta}    \left( W^\ast(t),\overbar{\mathbf{Y}}(t)\right)
\end{array}\right. \end{split}
\end{align}
is an optimal strategy.
\end{definition}

In the Hamilton-Jacobi-Bellman equation (\ref{HJB}), the role of the variables $u,Q_{11},Q_{12}$ in the definition of $\H$ is played by, respectively,  $\partial_wv\left(w,\overbar{\mathbf{x}}\right),\partial^2_{ww}v\left(w,\overbar{\mathbf{x}}\right),\partial^2_{w x^{(1)}_0}v\left(w,\overbar{\mathbf{x}}\right)$, where $v$ is the unknown. Thus, recalling what we called case \ref{case1} after we introduced the Hamiltonian, it makes sense to define the maps
\begin{equation}\label{EQ_DEF_FEEDBACK_MAP}
\begin{cases}
  \mathbf{c}_{f}( w,\overbar{\mathbf{x}}):=   f_{\infty}^{-1}\Gamma_{\infty}( w,\overbar{\mathbf{x}}){,}    \\
\mathbf{B}_{f}( w,\overbar{\mathbf{x}}):=k^{ -b} f_{\infty}^{-1}\Gamma_{\infty}( w,\overbar{\mathbf{x}}){,}  \\
\bm{\theta}_f( w,\overbar{\mathbf{x}}):=(\sigma\sigma^\top)^{-1} (\mu-r\mathbf 1) \frac{\Gamma_{\infty} ( w,\overbar{\mathbf{x}})}{\gamma  }-   (\sigma\sigma^\top)^{-1} \sigma   \sigma_y  g_{\infty} x^{(1)}_0
\\[1.5mm]
\phantom{\bm{\theta}_f( w,\overbar{\mathbf{x}}):}=\frac{1}{\gamma  }\Gamma_{\infty} ( w,\overbar{\mathbf{x}})(\sigma^\top)^{-1} \kappa- g_{\infty} x^{(1)}_0(\sigma^\top)^{-1}  \sigma_y\ {.}
\end{cases}
\end{equation}
We want to prove that this is an optimal feedback map.\\
For given $\left(w,\overbar{\mathbf{x}}\right)\in \calh_+$, denote with $W_f^\ast(t)$ the unique solution of the associated closed loop equation
\begin{equation}
  \label{CLOSED_LOOP_EQUATION_W}
  \begin{cases}
  {\rm d}W(t) =  \left[ (r+\delta) W(t)+ \bm{\theta}_f\left(W(t),\overbar{\mathbf{x}}(t)\right)\cdob(\mu-r\1)  +  P_{1,0}\overbar{\mathbf{Y}}(t) - \mathbf{c}_f\left(W(t),\overbar{\mathbf{x}}(t)\right)\right.\\
  \phantom{{\rm d}W(t) = }\left.- \delta \mathbf{B}_f\left(W(t),\overbar{\mathbf{x}}(t)\right) \right] {\rm d}t+  \bm{\theta}_f(t) \cdob\sigma {\rm d}Z(t),\\
  {\rm d}\overbar{\mathbf{Y}}(t)=A\overbar{\mathbf{Y}}(t){\rm d}t+F\left(\overbar{\mathbf{Y}}(t)\right){\rm d}Z(t),\\
  \left(W(0),\overbar{\mathbf{Y}}(0)\right)=\left(w,\overbar{\mathbf{x}}\right)
\end{cases}
\end{equation}
and set
\begin{equation}\label{DEF_GAMMA_INFTY_STAR}
  \Gamma_{\infty}^\ast(t)= \Gamma_{\infty} \big(W_f^\ast(t), \overbar{\mathbf{Y}}(t)\big)  =W_f^\ast(t) + \langle\overbar{\mathbf{l}}_\infty,\overbar{\mathbf{Y}}(t)\rangle_{\calm_2^2}\ .
\end{equation}
The control strategy associated with (\ref{EQ_DEF_FEEDBACK_MAP}) is then
\begin{equation}
  \label{EQ_FEEDBACK_STRATEGIES_INF_RET}
\begin{cases}
\tilde c_f(t):= \mathbf{c}_f\left( W^\ast_f(t),\overbar{\mathbf{Y}}(t)\right) =  f_{\infty}^{-1}\Gamma_{\infty}^\ast(t) ,  \\
\tilde B_f(t):=\mathbf{B}_f \left( W^\ast_f(t),\overbar{\mathbf{Y}}(t)\right) =k^{ -b} f_{\infty}^{-1}\Gamma_{\infty}^\ast(t)  ,  \\
 \tilde \theta_f(t):= \bm{\theta}_f\left( W^\ast_f(t),\overbar{\mathbf{Y}}(t)\right) =\frac{\Gamma^\ast_{\infty} (t)}{\gamma  }(\sigma^\top)^{-1} \kappa- g_{\infty} P_{1,0}\overbar{\mathbf{Y}}(t)(\sigma^\top)^{-1}  \sigma_y{.}
\end{cases}
\end{equation}
We first show that this strategy is admissible.
\begin{lemma}\label{PROP_DYNAMIC_H_INF_RET}
Let $(w,\overbar{\mathbf{x}})\in \calh_+$. The process $\Gamma_{\infty}^\ast$ defined in (\ref{DEF_GAMMA_INFTY_STAR}) is a stochastic exponential satisfying equation
\begin{align}\label{DYN_H^*_INFTY}
\begin{split} {\rm d}  \Gamma_{\infty}^\ast (t) =& \Gamma_{\infty}^\ast (t) \Big(  r + \delta +\frac{1}{\gamma} \vert\kappa\vert^2- f_{\infty}^{-1}\big(1+\delta k^{-b}\big) \Big){\rm d}t+ \frac{ \Gamma_{\infty}^\ast (t)}{\gamma } \kappa\cdob {\rm d}Z(t).
\end{split}
\end{align}
\end{lemma}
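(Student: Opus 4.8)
The plan is to read off the dynamics of $\Gamma_\infty^\ast$ from the identity (\ref{dGamma}), which gives the evolution of the process $\overline\Gamma_\infty$ associated with an \emph{arbitrary} control triple, evaluated along the feedback controls $(\tilde c_f,\tilde B_f,\tilde\theta_f)$ of (\ref{EQ_FEEDBACK_STRATEGIES_INF_RET}). First I would dispose of the well-posedness of the closed-loop system (\ref{CLOSED_LOOP_EQUATION_W}): the $\calm_2^2$-component $\overbar{\mathbf{Y}}$ solves the autonomous linear equation ${\rm d}\overbar{\mathbf{Y}}=A\overbar{\mathbf{Y}}\,{\rm d}t+F(\overbar{\mathbf{Y}})\,{\rm d}Z$, which by Proposition \ref{equiv_stoch} has a unique mild solution with $P_{1,0}\overbar{\mathbf{Y}}(t)=y(t)$; the feedback maps in (\ref{EQ_DEF_FEEDBACK_MAP}) are affine in $w$ with coefficients that are adapted, locally bounded functions of $\overbar{\mathbf{Y}}$ alone, so the scalar equation for $W$ in (\ref{CLOSED_LOOP_EQUATION_W}) is an affine SDE driven by processes built out of $\overbar{\mathbf{Y}}$ and admits a unique strong solution $W_f^\ast$ by \cite[Chapter 5.6]{KARATZAS_SHREVE_91}. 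Equivalently — and this is really the content of the lemma — the change of variable $\Gamma_\infty^\ast(t)=W_f^\ast(t)+\langle\overbar{\mathbf{l}}_\infty,\overbar{\mathbf{Y}}(t)\rangle_{\calm_2^2}$ of (\ref{DEF_GAMMA_INFTY_STAR}) turns (\ref{CLOSED_LOOP_EQUATION_W}) into a closed scalar SDE for $\Gamma_\infty^\ast$, so the two well-posedness statements are one and the same.

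To identify that SDE I would reuse, essentially verbatim, the computation (\ref{ito_l_infty})--(\ref{dGamma}): it is obtained by applying the Itô formula of \cite[Proposition 1.165]{FABBRI_GOZZI_SWIECH_BOOK} to $\langle\overbar{\mathbf{l}}_\infty,\overbar{\mathbf{Y}}\rangle_{\calm_2^2}$ together with the state equation and the algebraic identities (\ref{Astar_l}) and (\ref{comb_0}), and it never uses the admissibility of the control, so it is legitimate to feed it $(\tilde c_f,\tilde B_f,\tilde\theta_f)$. Using $P_{1,0}\overbar{\mathbf{Y}}(t)=y(t)$, the last line of (\ref{EQ_FEEDBACK_STRATEGIES_INF_RET}) gives $\sigma^\top\tilde\theta_f(t)=\tfrac1\gamma\Gamma_\infty^\ast(t)\kappa-g_\infty y(t)\sigma_y$, so the vector $g_\infty y(t)\sigma_y+\sigma^\top\tilde\theta_f(t)$ appearing in (\ref{dGamma}) collapses to $\tfrac1\gamma\Gamma_\infty^\ast(t)\kappa$, while $\tilde c_f(t)+\delta\tilde B_f(t)=f_\infty^{-1}(1+\delta k^{-b})\Gamma_\infty^\ast(t)$. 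Plugging these into (\ref{dGamma}) and using $\kappa\cdob\kappa=\vert\kappa\vert^2$ turns the drift into $\big(r+\delta+\tfrac1\gamma\vert\kappa\vert^2-f_\infty^{-1}(1+\delta k^{-b})\big)\Gamma_\infty^\ast(t)$ and the diffusion into $\tfrac1\gamma\Gamma_\infty^\ast(t)\kappa\cdob{\rm d}Z(t)$, which is exactly (\ref{DYN_H^*_INFTY}).

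It then remains only to recall that a linear homogeneous SDE of this form with constant coefficients has a unique solution, given by the stochastic exponential; with initial datum $\Gamma_\infty^\ast(0)=\Gamma_\infty(w,\overbar{\mathbf{x}})$ this is
\[
\Gamma_\infty^\ast(t)=\Gamma_\infty(w,\overbar{\mathbf{x}})\exp\!\left(\Big(r+\delta+\tfrac1\gamma\vert\kappa\vert^2-\tfrac{\vert\kappa\vert^2}{2\gamma^2}-f_\infty^{-1}(1+\delta k^{-b})\Big)t+\tfrac1\gamma\kappa\cdob Z(t)\right),
\]
which in particular is strictly positive for all $t\geq0$ when $\Gamma_\infty(w,\overbar{\mathbf{x}})>0$ and identically zero when $\Gamma_\infty(w,\overbar{\mathbf{x}})=0$, in agreement with Proposition \ref{prop_tau}. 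There is essentially no analytic difficulty here: the only point deserving a short justification is the well-posedness of the closed-loop equation, which is immediate from its affine structure in $W$ (equivalently, from the fact that the above change of variable reduces it to the linear SDE (\ref{DYN_H^*_INFTY})); the rest is the bookkeeping of substituting the feedback maps into the already-proven identity (\ref{dGamma}).
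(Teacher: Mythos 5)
Your proposal is correct and follows essentially the same route as the paper: the paper also substitutes the feedback strategies \eqref{EQ_FEEDBACK_STRATEGIES_INF_RET} into the wealth equation of \eqref{CLOSED_LOOP_EQUATION_W} and then adds ${\rm d}\langle\overbar{\mathbf{l}}_\infty,\overbar{\mathbf{Y}}(t)\rangle_{\calm_2^2}$ via \eqref{ito_l_infty}, proceeding exactly as in the computation leading to \eqref{dGamma}. Your additional remarks on well-posedness of the closed-loop equation and the explicit stochastic-exponential formula are correct refinements that the paper leaves implicit, but they do not change the argument.
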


\begin{proof}
Substituting \eqref{EQ_FEEDBACK_STRATEGIES_INF_RET} into the first equation of \eqref{CLOSED_LOOP_EQUATION_W} we get
\begin{align}\label{CLOSED_LOOP_W_INF_RET}
{\rm d} W_f^\ast(t) =&\Big\{ W_f^\ast(t)(r+\delta) +
 \Gamma_{\infty}^\ast(t)\left[ \frac{\vert\kappa\vert^2}{\gamma }   - f_{\infty}^{-1}\big(1 + \delta k^{-b}\big)  \right] + P_{1,0}\overbar{\mathbf{Y}}(t)-   g_{\infty} P_{1,0}\overbar{\mathbf{Y}}(t) \sigma_y\cdob\kappa\Big\}{\rm d}t\\
&+ \left\{ \frac{\Gamma_{\infty}^\ast(t)}{ \gamma} \kappa - g_{\infty} P_{1,0}\overbar{\mathbf{Y}}(t)\sigma_y \right\}\cdob {\rm d}Z(t).
\end{align}
Since
\begin{equation*}
  {\rm d}\Gamma_\infty^\ast(t)={\rm d}W^\ast_f(t)+{\rm d}\langle \overbar{\mathbf{l}}_\infty,\overbar{\mathbf{Y}}(t)\rangle_{\calm_2^2}\ ,
\end{equation*}
using (\ref{ito_l_infty}) and proceeding as in the computation leading to (\ref{dGamma}) we find the claim.
% \begin{equation}
%   \label{CLOSED_LOOP_W_FINITE_HOR}
% {\rm d} \Gamma_{\infty}^\ast(t) = \Gamma_{\infty}^\ast(t)\left[ (r+\delta)+ \frac{\vert\kappa\kappa^2}{\gamma}   - f_{\infty}^{-1}\big(1+ \delta k^{-b}\big)  \right] {\rm d}t+ \frac{\Gamma_{\infty}^\ast(t)}{ \gamma} \kappa\cdob {\rm d}Z(t)\ .
% \end{equation}
\end{proof}
We now state and prove our main result in the case $\gamma\in(0,1)$.
\begin{theorem}\label{th:VERIFICATION_THEOREM_INF_RET}
We have $V=\tilde v$ in $\calh_+$. Moreover the function $\left(\mathbf{c}_f, \mathbf{B}_f, \bm{\theta}_f\right)$ defined in (\ref{EQ_DEF_FEEDBACK_MAP}) is an optimal feedback map. Finally, for every $(w,\overbar{\mathbf{x}})\in \calh_{+}$ the strategy $\tilde\pi_f:=(\tilde c_f,\tilde B_f,\tilde\theta_f)$ is the unique optimal strategy.
\end{theorem}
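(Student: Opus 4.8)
The plan is to combine the fundamental identity of Proposition \ref{prop:fundamental} with an explicit verification that the feedback map \eqref{EQ_DEF_FEEDBACK_MAP} achieves equality in it. First I would establish that the feedback strategy $\tilde\pi_f=(\tilde c_f,\tilde B_f,\tilde\theta_f)$ is admissible: Lemma \ref{PROP_DYNAMIC_H_INF_RET} shows $\Gamma_\infty^\ast$ is a geometric Brownian-motion-type process, hence strictly positive on $\calh_{++}$ whenever $\Gamma_\infty(w,\overbar{\mathbf{x}})>0$, so $\calx^\ast(t)\in\calh_+$ for all $t\ge 0$; moreover the explicit SDE \eqref{DYN_H^*_INFTY} gives $\Gamma_\infty^\ast(t)$ an explicit log-normal form, from which one checks the integrability requirements in $\Pi^0$ (using Assumption \ref{Hyp_gamma} to control the relevant exponential moments). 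When $\Gamma_\infty(w,\overbar{\mathbf{x}})=0$, Proposition \ref{prop_tau}(i) forces all controls to vanish, which is exactly what \eqref{EQ_FEEDBACK_STRATEGIES_INF_RET} yields, so admissibility holds on all of $\calh_+$.

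Next I would plug $\tilde\pi_f$ into the fundamental identity \eqref{eq:vtilde_J}. The key point is that, by construction of case \textbf{(i)} after the Hamiltonian, the feedback controls $\mathbf{c}_f,\mathbf{B}_f,\bm{\theta}_f$ are precisely the maximizers $c^\ast,B^\ast,\theta^\ast$ of $\H_c$ evaluated at $u=\partial_w\tilde v$, $Q_{11}=\partial^2_{ww}\tilde v$, $Q_{12}=\partial^2_{wx_0^{(1)}}\tilde v$ — this is a direct algebraic comparison between \eqref{EQ_DEF_FEEDBACK_MAP} and the formulas for $\theta^\ast,c^\ast,B^\ast$, using the derivatives of $\tilde v$ computed in the proof of Theorem \ref{thm_sol1} and the relations $\kappa=\sigma^{-1}(\mu-r\mathbf 1)$, $f_\infty^{-1/\gamma}=$ (constant)$\cdot\Gamma_\infty^{-1}$. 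Therefore $\H_{\text{max}}\big(y(s),\partial_w\tilde v,\dots\big)-\H_c\big(y(s),\partial_w\tilde v,\dots,\tilde\pi_f\big)=0$ along the optimal trajectory $\calx^\ast$, and \eqref{eq:vtilde_J} collapses to $\tilde v(w,\overbar{\mathbf{x}})=J(w,\overbar{\mathbf{x}},\tilde\pi_f)$. Combined with Corollary \ref{coro:V_finite}, which gives $V\le\tilde v$, this yields $V(w,\overbar{\mathbf{x}})=\tilde v(w,\overbar{\mathbf{x}})=J(w,\overbar{\mathbf{x}},\tilde\pi_f)$ on $\calh_{++}$, so $\tilde\pi_f$ is optimal there; the case $\Gamma_\infty=0$ is handled separately since then $V=\tilde v=0$ and the only admissible (hence optimal) strategy is the null one.

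For uniqueness of the optimal strategy, I would argue that if $\pi\in\Pi(w,\overbar{\mathbf{x}})$ is optimal then \eqref{eq:vtilde_J} with left-hand side $\tilde v=V=J(w,\overbar{\mathbf{x}},\pi)$ forces
\begin{equation*}
\E\int_0^{\tau_+}e^{-(\rho+\delta)s}\Big[\H_{\text{max}}\big(y(s),\partial_w\tilde v(\calx(s;\pi)),\dots\big)-\H_c\big(y(s),\partial_w\tilde v(\calx(s;\pi)),\dots,\pi\big)\Big]{\rm d}s=0,
\end{equation*}
and since the integrand is nonnegative it must vanish ${\rm d}t\otimes\P$-a.e. on $[0,\tau_+)\times\Omega$. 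Because on $\calh_{++}$ we are in the strictly concave case \textbf{(i)} (as $\partial_w\tilde v>0$ and $\partial^2_{ww}\tilde v<0$), the maximizer of $\H_c$ is unique, so $(c(s),B(s),\theta(s))$ must coincide with the feedback values at $\calx(s;\pi)$ for a.e.\ $s<\tau_+$; this shows $\pi$ solves the same closed-loop equation as $\tilde\pi_f$ before $\tau_+$, hence $\calx(\cdot;\pi)=\calx^\ast(\cdot)$ up to $\tau_+$ and $\pi=\tilde\pi_f$ there, while Proposition \ref{prop_tau}(ii) pins down $\pi=\tilde\pi_f=0$ after $\tau_+$. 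I expect the main technical obstacle to be the admissibility/integrability verification in the first step — in particular checking that the closed-loop wealth process $W_f^\ast$ and the induced $\tilde c_f,\tilde B_f,\tilde\theta_f$ satisfy the $L^1$/$L^2$ conditions defining $\Pi^0$ uniformly, which requires a careful exponential-moment estimate on $\Gamma_\infty^\ast$ leaning on Assumption \ref{Hyp_gamma}, and the bookkeeping needed to match $\bm\theta_f$ with $\theta^\ast$ given the two equivalent forms in \eqref{EQ_DEF_FEEDBACK_MAP}.
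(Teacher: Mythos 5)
Your proposal is correct and follows essentially the same route as the paper's proof: admissibility of $\tilde\pi_f$ via the stochastic-exponential property of $\Gamma_\infty^\ast$ from Lemma \ref{PROP_DYNAMIC_H_INF_RET}, optimality by plugging the feedback (which maximizes $\H_c$ at the derivatives of $\tilde v$) into the fundamental identity \eqref{eq:vtilde_J} and invoking Corollary \ref{coro:V_finite}, and uniqueness from the vanishing of the nonnegative integrand in \eqref{eq:vtilde_J} together with Proposition \ref{prop_tau}. One small imprecision: on the boundary $\partial\calh_+$ the unique admissible strategy is not the \emph{null} one — it has $\tilde c_f=\tilde B_f=0$ but $\tilde\theta_f(t)=-g_\infty P_{1,0}\overbar{\mathbf{Y}}(t)(\sigma^\top)^{-1}\sigma_y$, as your first paragraph (matching Proposition \ref{prop_tau}(i) with \eqref{EQ_FEEDBACK_STRATEGIES_INF_RET}) correctly states.
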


\begin{proof}
We first take $(w,\overbar{\mathbf{x}}) \in \partial\calh_+=\{\Gamma_\infty=0\}$. We thus have, by equation \eqref{DYN_H^*_INFTY}, that almost surely $\Gamma^\ast_{\infty}(t)=0$ for every $t\geq 0$. This in turn implies, by \eqref{EQ_FEEDBACK_STRATEGIES_INF_RET}, that
\begin{equation*}
\tilde c_f \equiv 0,\qquad
\tilde B_f \equiv 0,\qquad
\tilde \theta_f \equiv - g_{\infty} P_{1,0}\overbar{\mathbf{Y}}(t)(\sigma^\top)^{-1}  \sigma_y {.}
\end{equation*}
It follows from Proposition \ref{prop_tau} that this is the only admissible strategy, therefore it must be optimal.\\
Now we consider $(w,\overbar{\mathbf{x}}) \in \calh_{++}$. First we observe that $\tilde\pi_f$ is an admissible strategy; indeed
by Lemma \ref{PROP_DYNAMIC_H_INF_RET} $\Gamma_{\infty}^\ast(\cdot)$ is a stochastic exponential, therefore almost surely strictly positive for any strictly positive initial condition $\Gamma_{\infty}^\ast(0)=\Gamma_\infty(w,x)$, and this implies that the constraint (\ref{constraint_rewritten}) is always satisfied, so that the strategy is admissible provided that $\tilde c_f$ and $\tilde B_f$ are nonnegative. This last fact follows however immediately from (\ref{EQ_FEEDBACK_STRATEGIES_INF_RET}).\\
Concerning optimality we observe that, as recalled above, the feedback map is obtained taking the maximum points of the Hamiltonian given in \eqref{EQ_DEF_FEEDBACK_MAP} and substituting the derivatives $\partial_w\tilde v, \partial_{ww}^2 \tilde v, \partial_{w x^{(1)}_0}\tilde v$ in place of $u,Q_{11},Q_{12}$, respectively. This implies that, substituting $\tilde\pi_f$ in the fundamental identity (\ref{eq:vtilde_J}), we obtain
\begin{equation*}
\tilde v(w,\overbar{\mathbf{x}})=J\left(w,\overbar{\mathbf{x}};\tilde\pi_f\right){.}
\end{equation*}
Hence, by Corollary \ref{coro:V_finite} and the definition of the value function,
\begin{equation*}
V(w,\overbar{\mathbf{x}})\leq \tilde v(w,\overbar{\mathbf{x}})=J\left(w,\overbar{\mathbf{x}};\tilde\pi_f\right)\leq V(w,\overbar{\mathbf{x}}){,}
\end{equation*}
which gives $ V(w,\overbar{\mathbf{x}})=J\left(w,\overbar{\mathbf{x}};\tilde\pi_f\right)$, namely, optimality of $\tilde\pi_f$.\\

We now prove uniqueness. When $(w,\overbar{\mathbf{x}})\in \partial\calh_+$ the claim again easily follows from Proposition \ref{prop_tau}. When instead $(w,\overbar{\mathbf{x}})\in \calh_{++}$, uniqueness follows from the fundamental identity (\ref{eq:vtilde_J}). Indeed, since $\tilde v=V$, if a given strategy $\pi$ is optimal at $(w,\overbar{\mathbf{x}})\in \calh_{++}$ it must satisfy $\tilde v(w,\overbar{\mathbf{x}})=J(w,\overbar{\mathbf{x}};\pi)$, which implies, substituting in (\ref{eq:vtilde_J}), that the integral in (\ref{eq:vtilde_J}) is zero. This implies that, on $[0,\tau_+]$ we have $\pi=\tilde\pi_f$, ${\rm d}t \otimes \P$-almost everywhere. This is enough for uniqueness as, for $t> \tau_+$, we still must have $\pi=\tilde\pi_f$ ${\rm d}t \otimes \P$-almost everywhere, due to Proposition \ref{prop_tau}.
\end{proof}

\begin{remark}\label{rm:gamma>1}
The result analogous to Theorem \ref{th:VERIFICATION_THEOREM_INF_RET} for the case $\gamma >1$ can be obtained using the same approach proposed in \cite[Subsection 4.6]{BGP}. We do not present this here for brevity.
\end{remark}

\section{Back to the original problem}
\label{sec:back}

We now explain what our main result (Theorem \ref{th:VERIFICATION_THEOREM_INF_RET}) on the general  Problem \ref{pbl} says on our original problem of Section 2.

First of all, from Remark \ref{rm:changevar} and Proposition \ref{equiv_stoch} we observe that our original problem
can be seen as a ``subproblem'' of the general  problem in the sense that, when the initial conditions for $y$ and $e$ satisfy the last two of \eqref{DYNAMICS_WEALTH_LABOR_INCOMEe} the optimal strategies of our general  problem is also the optimal strategies of the original problems with the initial data of $y$ in \eqref{DYNAMICS_WEALTH_LABOR_INCOMEe}.

From Theorem 5.9 and Remark \ref{rm:gamma>1}
we get the following result.

\begin{theorem}\label{th:mainlast}
	The value function $V$ of our original problem of Section 2 is given by
	\begin{equation}
	V(w,x_0,x_1) =  \frac{ f_{\infty}^{\gamma} \left(
		w + (1-i_\infty)\left[g_{\infty} x_0 + \int_{-d}^0 h_{\infty}(s) x_1(s) \,{\rm d}s\right]
		\right)^{1-\gamma} }
	{1-\gamma},
	\end{equation}
	where $f_{\infty}$ is defined in (\ref{defnufinfty})
and $\left(g_{\infty},h_{\infty},i_\infty\right)$ in \eqref{defgiinfty}-\eqref{defhinfty}.
Moreover for every $(w,x) \in \R\times \calm_2^2$
there exists a unique optimal strategy $\pi^*=(c^*,B^*,\theta^*)\in \Pi$ starting at $(w,x)$.
Such strategy can be represented as follows.
Denote total wealth by
\vspace{-0.3truecm}
\begin{equation}\label{Gamma-infty2}
  \Gamma_{\infty}^* (t): =W^*(t) + g_{\infty}(y(t)-i_\infty \E[y(t)])+ \int_{-d}^0 h_{\infty}(s) \left(y(t+s)-i_\infty\E[y(t+s)]\right) \,{\rm d}s,
\vspace{-0.3truecm}
  	\end{equation}
where $W^*(\cdot)$ is the solution of equation
\eqref{CLOSED_LOOP_EQUATION_W}
%(\ref{DYN_W_X_INFINITE_RETIREMENT_II})
with initial datum $w$ and control $\pi^*$, whereas $y(\cdot)$ is the solution of the second equation in (\ref{DYNAMICS_WEALTH_LABOR_INCOME}) with datum $x=(x_0,x_1)\in \calm_2^2$.
Then, $\Gamma^*_\infty$ has  dynamics
	\begin{align}\label{DYN_GAMMA*_PB1}
	\begin{split}
	d  \Gamma_{\infty}^* (t) =& \Gamma_{\infty}^* (t) \Big(  r + \delta +\frac{\vert\kappa\vert^2}{\gamma}
	- f_{\infty}^{-1}\big( 1 %M^{-b}
	+\delta k^{-b}\big) \Big){\rm d}t%\\
	%&
	+  \frac{\Gamma_{\infty}^* (t)}{\gamma } \kappa\cdob {\rm d}Z(t),
	\end{split}
	\end{align}
and the optimal strategy triplet $\pi^*=(c^*,B^*,\theta^*)$
for our original problem of Section 2 is given by
	\begin{align}\label{OPTIMAL_STRATEGIES_RET_INF}
	\begin{split}
	c^{*}(t)&:=  %M^{ -b}
	f_{\infty}^{-1}  \Gamma_{\infty}^*(t) {,}  \\
	B^{*}(t)&:=k^{ -b } f_{\infty}^{-1}  \Gamma_{\infty}^*(t)      {,} \\
	\theta^{*}(t)&:=
\frac{\Gamma_{\infty}^*(t)}{ \gamma}
(\sigma^\top)^{-1}\kappa
- g_{\infty}y(t)(\sigma^\top)^{-1} \sigma_y.
	\end{split}
	\end{align}
\end{theorem}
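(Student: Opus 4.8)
The plan is to derive Theorem \ref{th:mainlast} directly from Theorem \ref{th:VERIFICATION_THEOREM_INF_RET} (and from its $\gamma>1$ analogue recalled in Remark \ref{rm:gamma>1}), by noting that the original problem of Section \ref{Problem formulation} is precisely a special instance of the general Problem \ref{pbl}, read in the infinite-dimensional state space $\calh$.

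First I would fix $(w,x_0,x_1)$, set $\overbar x=(x_0,x_1)\in\calm_2$ and $\overbar{\mathbf{x}}=\xphi{\overbar x}{\overbar x}\in\calm_2^2$, and invoke the reduction of Remark \ref{rm:changevar} and Proposition \ref{equiv_stoch}: introducing $e(t)=\E[y(t)]$ turns the labor income equation in \eqref{DYNAMICS_WEALTH_LABOR_INCOME} into \eqref{DYNAMICS_WEALTH_LABOR_INCOMEe}, along every solution with this diagonal datum one has $e(t)=\E[y(t)]$ for all $t\ge0$, and the resulting system is exactly \eqref{systemH_condensed} with initial condition $(w,\overbar{\mathbf{x}})$, so that $P_{1,0}\overbar{\mathbf{Y}}(t)=y(t)$. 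Combining the human capital formula \eqref{HC} (legitimate since Assumption \ref{Hyp_K} forces $r+\delta>\lambda_0$, see Subsection \ref{subsec:comment}) with $e(t)=\E[y(t)]$ one gets $\xi^{-1}(t)\,\E\big[\int_t^{+\infty}\xi(u)y(u){\rm d}u\mid\mathcal{F}_t\big]=\langle\overbar{\mathbf{l}}_\infty,\overbar{\mathbf{Y}}(t)\rangle_{\calm_2^2}$, so the no-borrowing constraint \eqref{NO_BORROWING_WITHOUT_REPAYMENT_CONDITIONLA_MEAN} is exactly $\Gamma_\infty\big(W(t),\overbar{\mathbf{Y}}(t)\big)\ge0$; since $\Pi^0$ and the functional $J$ (which depends only on $c,B$) are unchanged, the original problem at $(w,x_0,x_1)$ and Problem \ref{pbl} at $(w,\overbar{\mathbf{x}})$ coincide, including the description of their admissible and optimal controls.

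Then I would simply specialize the conclusions of Theorem \ref{th:VERIFICATION_THEOREM_INF_RET}. That result gives $V(w,\overbar{\mathbf{x}})=\tilde v(w,\overbar{\mathbf{x}})$ with $\tilde v$ as in \eqref{def_v_tilde}; evaluating $\Gamma_\infty$ from \eqref{gammainfinity} at the diagonal datum gives
\[
\Gamma_\infty(w,\overbar{\mathbf{x}})=w+(1-i_\infty)\langle\overbar{l}_\infty,\overbar x\rangle_{\calm_2}=w+(1-i_\infty)\Big[g_\infty x_0+\int_{-d}^0 h_\infty(s)x_1(s)\,{\rm d}s\Big],
\]
which, plugged into \eqref{def_v_tilde}, yields the announced formula for $V(w,x_0,x_1)$. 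For the optimal strategy, Theorem \ref{th:VERIFICATION_THEOREM_INF_RET} says the feedback map \eqref{EQ_DEF_FEEDBACK_MAP} is optimal and $\tilde\pi_f=(\tilde c_f,\tilde B_f,\tilde\theta_f)$ of \eqref{EQ_FEEDBACK_STRATEGIES_INF_RET} is the unique optimal strategy; since $P_{1,0}\overbar{\mathbf{Y}}(t)=y(t)$ and the second component of $\overbar{\mathbf{Y}}$ is $\E[y]$, the process $\Gamma_\infty^*$ of \eqref{DEF_GAMMA_INFTY_STAR} is exactly \eqref{Gamma-infty2}, its dynamics \eqref{DYN_GAMMA*_PB1} is the one established in Lemma \ref{PROP_DYNAMIC_H_INF_RET}, and \eqref{EQ_FEEDBACK_STRATEGIES_INF_RET} becomes \eqref{OPTIMAL_STRATEGIES_RET_INF}. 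Uniqueness transfers because the two problems are literally the same; the case $\gamma>1$ is identical, with Remark \ref{rm:gamma>1} in place of Theorem \ref{th:VERIFICATION_THEOREM_INF_RET}.

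The \emph{only} genuinely delicate point is the reduction carried out in the second paragraph: establishing rigorously that, on the diagonal initial datum, the original constraint \eqref{NO_BORROWING_WITHOUT_REPAYMENT_CONDITIONLA_MEAN} is equivalent to $\Gamma_\infty\ge0$ and that the passage from the description $(W,y)$ to $(W,\overbar{\mathbf{Y}})$ is an equivalence of control problems. All the work here is done by the identity $e(t)=\E[y(t)]$, by Proposition \ref{equiv_stoch}, and by the human capital representation \eqref{HC} of Section \ref{sec:constraint}; with these in hand the theorem follows by bookkeeping.
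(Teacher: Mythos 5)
Your proposal is correct and follows essentially the same route as the paper: the paper also deduces Theorem \ref{th:mainlast} by viewing the original problem as the general Problem \ref{pbl} restricted to the ``diagonal'' initial data (via Remark \ref{rm:changevar}, Proposition \ref{equiv_stoch} and the human capital representation of Section \ref{sec:constraint}), and then specializing Theorem \ref{th:VERIFICATION_THEOREM_INF_RET}, Lemma \ref{PROP_DYNAMIC_H_INF_RET} and Remark \ref{rm:gamma>1}. Your evaluation of $\Gamma_\infty$ at the diagonal datum, giving the factor $(1-i_\infty)$ in the value function, matches the paper's formula, and your identification of \eqref{EQ_FEEDBACK_STRATEGIES_INF_RET} with \eqref{OPTIMAL_STRATEGIES_RET_INF} is exactly the paper's argument.
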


In particular, it is interesting to compare the optimal solution in this case, with the one of \cite{BGP} when the mean reversion speed $\epsilon$ disappears.

First of all we observe that, due to the linear character of the infinite dimensional Merton's model, the dynamics of the optimal total capital $\Gamma^*_\infty$ is the same as in \cite{BGP} as it does not depend on $\epsilon$. However, its initial value is different since here $\Gamma^*_\infty(0)=\Gamma_\infty(w,\overbar{\mathbf{x}})$ which is now different as it contains the part coming from the mean reverting term. Moreover, as in \cite{BGP}, the dynamics of the optimal consumption and bequest are constant fractions (independent of the mean reversion speed)  of the optimal total capital $\Gamma^*_\infty$. Hence, the differences with the solution of \cite{BGP}, concerning the
optimal total capital, the optimal consumption and the optimal bequest,
come only from the initial total capital $\Gamma_\infty(w,\overbar{\mathbf{x}})$ which can be proved to be decreasing in $\epsilon$. Hence, the presence of a mean reverting term with $\epsilon <0$ makes such variables to increase their value.

On the other hand, things are different if one looks at the optimal trading strategy. This is the sum of a constant fraction of the optimal total capital $\Gamma^*_\infty$ and of a so-called negative hedging demand term.
Here such term is
$-g_{\infty} y(t)(\sigma^\top)^{-1}  \sigma_y{,}$
thus the only difference lies in the different form of
$g_{\infty}$ which, when $\epsilon <0$, is smaller than the one in the case $\epsilon =0$.

The analysis of the financial effect of benchmarking labor incomes on the life-cycle portfolio choice problem through a careful comparison between the standard Merton's model with the benchmarked one ({with or} without delay i.e. $\phi(\cdot)=0$) is also interesting and deserves a stand alone paper that  we leave for the near future.

\section*{Acknowledgments}
The authors are grateful to Enrico Biffis for useful comments and suggestions.
Boualem Djehiche gratefully acknowledge financial support by the Verg Foundation. Fausto Gozzi, Giovanni Zanco and Margherita Zanella are supported by the Italian
Ministry of University and Research (MIUR), in the framework of PRIN
projects 2015233N54 006 (Deterministic and stochastic evolution equations) and
2017FKHBA8 001 (The Time-Space Evolution of Economic Activities: Mathematical
Models and Empirical Applications).

\section*{Appendix}
\begin{proof}[Proof of Proposition \ref{prop_semigroup}.]
  \begin{enumerate}[label=$(\roman{*})$]
\item  Consider the equivalent formulation of system \eqref{system1} given by \eqref{system2} with $t_0=0$ and introduce the $2 \times2$-matrix-valued finite measure on $[-d,0]$
\begin{equation}
a({\rm d}\lambda)=C_0\delta_0({\rm d}\lambda)+\Id_{2\times 2}\phi(\lambda){\rm d}\lambda.
\end{equation}
The operator $A_0$ can be then written in the form
\begin{equation}\label{eq_gen}
A_0\left(\mathbf{x}_0,\mathbf{x}_1\right)=\left(\int_{-d}^0 \mathbf{x}_1(\lambda)a({\rm d}\lambda), \frac{{\rm d}\mathbf{x}_1}{{\rm d}s}\right),
\end{equation}
therefore it generates a strongly continuous semigroup by \cite[Proposition A.27]{DAPRATO_ZABCZYK_RED_BOOK}.
\item  The compactness property of $S(t)$ for $t$ big enough is proven for example in \cite[Chapter 7, Lemma 1.2]{HALE_VERDUYN_LUNEL_BOOK}.
\item Existence and uniqueness of a weak solution given by (\ref{semigroup_1}) for deterministic $\overbar{\mathbf{m}}$ is classical (see \cite[Proposition A.5]{DAPRATO_ZABCZYK_RED_BOOK} and the related references therein); the fact that it is actually a strong solution if $\overbar{\mathbf{m}}\in\cald(A_0)$ is proved for example in \cite[Proposition A.7]{DAPRATO_ZABCZYK_RED_BOOK}; the geralization to random $\overbar{\mathbf{m}}$ is immediate. Property (\ref{semigroup_2}) then follows from uniqueness of the solution.
\item\label{it:equiv} If $\mathbf{n}(t_0;\cdot)$ is the unique solution to (\ref{system2}) then the $\calm_2^2$-valued process $\left(\mathbf{n}(t_0;t),\mathbf{n}(t_0;t+\cdot)\right)_{t\geq t_0}$ solves (\ref{INFINITE_DIMENSIONAL_STATE_EQUATION}) by \cite[Part II, Chapter 4, Theorem 4.3]{BENSOUSSAN_DAPRATO_DELFOUR_MITTER}. Since also the latter has a unique solution, its first component must in fact be the solution to (\ref{system2}).
\item This is an immediate consequence of \ref{it:equiv}.
\end{enumerate}
\end{proof}

\begin{proof}[Proof of Lemma \ref{lemma_resolvent}.]
If $\lambda\in\R\cap R(A_0)$ then both $K_1(\lambda)$ and $K_2(\lambda)$ are nonzero, by Lemma \ref{lemma_speck}. To compute $R(\lambda, A_0)$,
we will consider for a fixed $\left(\xphi{m_0}{e_0},\xphi{m_1}{e_1}\right)\in\calm_2^2$ the equation
\begin{equation}\label{EQ_PROOF_RESOLVENT}
(\lambda-A_0)\left(\xphi{u_0}{v_0},\xphi{u_1}{v_1}\right)=\left(\xphi{m_0}{e_0},\xphi{m_1}{e_1}\right)\ ,
\end{equation}
in the unknown $\left(\xphi{u_0}{v_0},\xphi{u_1}{v_1}\right)\in\cald(A_0)$, that by definition of $A_0$ is equivalent to
\begin{equation}
\label{system3}
\left\{
\begin{aligned}
\lambda u_0- (\epsilon+\my - \sigma_y\cdob \kappa) u_0+\epsilon v_0- \int_{-d}^0u_1(\tau)\phi(\tau)\,\text{d}\tau&=m_0
\\
\lambda v_0-\my v_0-\int_{-d}^0v_1(\tau)\phi(\tau)\,\text{d}\tau =e_0
\\
\lambda u_1-\frac{\text{d}u_1}{\text{d}s}&=m_1
\\
\lambda v_1-\frac{\text{d}v_1}{\text{d}s}&=e_1.
\end{aligned}\right.
\end{equation}
Then
\[u_1(s)=e^{\lambda s} u_0+\int_s^0e^{-\lambda(s_1-s)}m_1(s_1)\, \text{d}s_1,\quad s\in[-d,0],\]
and
\[v_1(s)=e^{\lambda s} v_0+\int_s^0e^{-\lambda(s_1-s)}e_1(s_1)\, \text{d}s_1,\quad s\in[-d,0].\]
Therefore $v_0$ is determined by the equation
\[(\lambda-\my) v_0=\Big[ e_0+\int_{-d}^0\left( e^{\lambda \tau} v_0
+\int_{\tau}^0e^{-\lambda(s-\tau)}e_1(s)\, \text{d}s \right)\phi(\tau)\text{d}\tau\Big]\]
yielding
\begin{eqnarray*}
K_2(\lambda)v_0
=e_0+ \int_{-d}^0\int_{-d}^{s}e^{-\lambda (s-\tau)}\phi(\tau)\text{d}\tau \, e_1(s) \text{d} s.
\end{eqnarray*}
If we now substitute the expressions for $v_0$ and $u_1$ in the first expression of \eqref{system3}, we obtain \eqref{u0}.
\end{proof}

\begin{proof}[Proof of Proposition \ref{prop_tau}.]
Under the probability measure $\tilde{\P}_T$ (defined in (\ref{Ptilde})), the process $\overline\Gamma_\infty$ satisfies, on $[0,T]$,
\begin{equation*}
{\rm d}\overline\Gamma_{\infty} (t)=
\left[(r+\delta)\overline\Gamma_{\infty} (t)    - c(t) - \delta B(t)\right] {\rm d}t
+ \left[\sigma^\top\theta(t) +  g_{\infty}   P_{1,0}\overbar{\mathbf{Y}}(t)  \sigma_y\right]\cdob
{\rm d}\tilde Z(t).
\end{equation*}
Thus we obtain, under $\tilde\P_T$, for every
$0\le t\le T$,
\begin{equation}\label{eq:Gammatau}
\overline\Gamma_\infty(t)=e^{(r+\delta)t}
\left[\overline\Gamma_\infty(0)-\int_{0}^{t} e^{-(r+\delta)s}(c(s) + \delta B(s)){\rm d}s
+  \int_{0}^{t} e^{-(r+\delta)s}\left[\sigma^\top\theta(s) +  g_{\infty}   P_{1,0}\overbar{\mathbf{x}}(s)  \sigma_y \right]\cdob{\rm d}\tilde Z(s)\right].
\end{equation}
Setting $\overline\Gamma_\infty^0(t):=e^{-(r+\delta)t}\overline\Gamma_\infty(t)$
the above \eqref{eq:Gammatau} is rewritten as
\begin{equation}\label{eq:Gamma0tau}
\overline\Gamma^0_\infty(t)=
\overline\Gamma^0_\infty(0)-\int_{0}^{t} e^{-(r+\delta)s}(c(s) + \delta B(s)){\rm d}s
+  \int_{0}^{t} e^{-(r+\delta)s}\left[\sigma^\top\theta(s) +  g_{\infty}   P_{1,0}\overbar{\mathbf{x}}(s)  \sigma_y \right]\cdob{\rm d}\tilde Z(s){,}
\end{equation}
which implies that the process $\Gamma^0_\infty(t)$ is a supermartingale under $\tilde \P_T$ on $[0,T]$.
By the optional sampling theorem we then have, for every couple of stopping times
$0\le \tau_1\le \tau_2\le T$, denoting by $\tilde\E_T$
the expectation under $\tilde\P_T$,
\begin{equation}\label{eq:OST}
\tilde \E_T \left[\overline\Gamma^0_\infty(\tau_2)| \calf_{\tau_1}\right]
\le
\overline\Gamma^0_\infty(\tau_1), \qquad \tilde \P_T\text{-a.s.}\ .
\end{equation}
The admissibility of the strategy $\pi$, and the fact that $\P$ and $\tilde\P_T$ are equivalent on $\calf_T$, implies that
$\overline\Gamma^0_\infty(\tau_2)\ge 0$, $\tilde \P_T$-a.s., hence also
\begin{equation*}
\tilde \E_T \left[\overline\Gamma^0_\infty(\tau_2)| \calf_{\tau_1}\right]
\ge 0,\qquad \tilde \P_T\text{-a.s.}\ .
\end{equation*}
Now let $\tau_1:=\tau_+ \wedge T$ where $\tau_+$ is defined in (\ref{eq:tau_+}) (which is taken to be identically $0$
when $\Gamma_\infty(w,\overbar{\mathbf{x}})=0$). Then $\overline\Gamma^0_\infty(\tau_1)=0$ on $\{\tau_+<T\}$, and from \eqref{eq:OST} we get
\begin{equation*}
\1_{\{\tau_+<T\}}
\tilde \E_T \left[\overline\Gamma^0_\infty(\tau_2)| \calf_{\tau_1}\right]
=
\tilde \E_T \left[\overline\Gamma^0_\infty(\tau_2)\1_{\{\tau_+<T\}}| \calf_{\tau_1}\right]
=0, \qquad \tilde\P_T{\text{-a.s.}}\ .
\end{equation*}
and, consequently,
\begin{equation}\label{eq:GammaInd0}
\overline\Gamma^0_\infty(\tau_2)\1_{\{\tau_+<T\}}=0, \qquad \tilde\P_T\text{-a.s.}
\end{equation}
We now use \eqref{eq:Gamma0tau} to compute
$\overline\Gamma^0_\infty(\tau_2)-\overline\Gamma^0_\infty(\tau_1)$ getting
\begin{equation}\label{eq:Gamma0taubis}
\overline\Gamma^0_\infty(\tau_2)-\overline\Gamma^0_\infty(\tau_1)=
-\int_{\tau_1}^{\tau_2} e^{-(r+\delta)s}(c(s) + \delta B(s)){\rm d}s
+  \int_{\tau_1}^{\tau_2} e^{-(r+\delta)s}
\left[\sigma^\top\theta(s)+
g_{\infty}P_{1,0}\overbar{\mathbf{Y}}(s)\sigma_y\right]\cdob{\rm d}\tilde Z(s).
\end{equation}
Again using the optional sampling theorem we get
\begin{equation*}
\tilde \E_T \left[\overline\Gamma^0_\infty(\tau_2)| \calf_{\tau_1}\right]
-\overline\Gamma^0_\infty(\tau_1)=
-\tilde \E_T \left[\int_{\tau_1}^{\tau_2} e^{-(r+\delta)s}(c(s) + \delta B(s)){\rm d}s
| \calf_{\tau_1}\right], \qquad \tilde\P_T\text{-a.s.}
\end{equation*}
Hence, taking $\tau_2\equiv T$
\begin{equation*}
0\le  \1_{\{\tau_+<T\}}
\tilde\E_T \left[\overline\Gamma^0_\infty(T)| \calf_{\tau_1}\right]
=
-\tilde \E_T \left[\int_{0}^{T}  \1_{\{\tau_+<s\}} e^{-(r+\delta)s}(c(s) + \delta B(s)){\rm d}s
| \calf_{\tau_1}\right], \qquad \tilde\P_T\text{-a.s.}
\end{equation*}
which implies
\begin{equation}\label{eq:cB0PT}
\1_{\{\tau_+<s\}}(\omega) c(s,\omega)= \1_{\{\tau_+<s\}}(\omega)B(s,\omega)=0,
\qquad {\rm d}s \otimes \tilde\P_T\text{-a.e. in }[0,T]\times \Omega.
\end{equation}
We now multiply \eqref{eq:Gamma0taubis} by $\1_{\{\tau_+<T\}}$
and we use \eqref{eq:GammaInd0} and \eqref{eq:cB0PT} to get
\begin{equation*}
0
=
\int_{0}^{\tau_2} e^{-(r+\delta)s}\1_{\{\tau_+<s\}}
\left[\sigma^\top\theta(s)+g_{\infty}P_{1,0}
\overbar{\mathbf{Y}}(s)\sigma_y\right]\cdob{\rm d}\tilde Z(s), \qquad \tilde\P_T\text{-a.s.}
\end{equation*}

Since the integral of the right hand side is a martingale the above implies that
\begin{equation}\label{theta0PT}
\1_{\{\tau_+<s\}} \sigma^\top\theta(s) +g_{\infty}P_{1,0}\overbar{\mathbf{Y}}(s)\sigma_y=0,
\qquad {\rm d}s \otimes \tilde\P_T\text{-a.e. in }[0,T]\times \Omega.
\end{equation}
Using \eqref{eq:GammaInd0}, \eqref{eq:cB0PT},\eqref{theta0PT}, the fact that $\P$ and $\tilde \P_T$ are equivalent on $\calf_T$ and the arbitrariness of $T$ we eventually get the claim.
\end{proof}

\begin{proof}[Proof of Lemma \ref{lemma_mean_zero}.]
Since almost surely for $t<\tau_+$ we have $\Gamma_\infty\left(\calx(t;\pi)\right)>0$, we can apply the Ito formula to the process
  \begin{equation}
\label{supermartingale}
e^{(\gamma-1)\left(r+\delta+\frac{\vert\kappa\vert^2}{2\gamma}\right)t}
f^\gamma_\infty\frac{\overline\Gamma_\infty^{1-\gamma}(t)}{1-\gamma}
  \end{equation}
obtaining, by (\ref{dGamma}),
\begin{align*}
{\rm d}\bigg[e^{(\gamma-1)\left(r+\delta+\frac{\vert\kappa\vert^2}
{2\gamma}\right)t}f^\gamma_\infty
\frac{\overline\Gamma_\infty^{1-\gamma}(t)}{1-\gamma}\bigg]
&=e^{(\gamma-1)\left(r+\delta+\frac{\vert\kappa\vert^2}{2\gamma}
\right)t}f^\gamma_\infty\left\{-\overline\Gamma_\infty^{-\gamma}(t)
\left(c(t)+\delta B(t)\right)\phantom{\frac12}\right.
\\
&\phantom{=}\left.-\frac{1}{2\gamma}\overline\Gamma_\infty^{-\gamma-1}(t)
\left\vert\overline\Gamma_\infty(t)\kappa-\gamma\left(\sigma^\top\theta(t)
+g_\infty y(t)\sigma_y\right)\right\vert^2\right\}{\rm d}t
\\
&\phantom{=}+e^{(\gamma-1)\left(r+\delta+\frac{\vert\kappa\vert^2}{2\gamma}
\right)t}f^\gamma_\infty\overline\Gamma_\infty^{-\gamma}(t)
\left(\sigma^\top\theta(t)+g_\infty y(t)\sigma_y\right)\cdob{\rm d}Z(t)\ .
\end{align*}
The drift term in the equation above is negative, because $t<\tau_+$ and both $c$ and $B$ take values in $\R_+$, thus the process given by (\ref{supermartingale}) is a local $\F$-supermartingale up to the exit time $\tau_+$.\\
Set now
\begin{equation*}
  \tau_N:=\inf\left\{t\geq 0 \colon \overline\Gamma_\infty(t)\leq\frac1N\right\}\ .
\end{equation*}
Taking $N$ sufficiently large we have that $\tau_N>0$ almost surely and both the drift and the diffusion coefficients above are integrable, therefore the process
  \begin{equation*}
    e^{(\gamma-1)\left(r+\delta+\frac{\vert\kappa\vert^2}{2\gamma}\right)
    \left(T\wedge\tau_N\right)}f^\gamma_\infty
    \frac{\overline\Gamma_\infty^{1-\gamma}(T\wedge\tau_N)}{1-\gamma}
  \end{equation*}
  is in $L^1$, hence
  \begin{equation}
    \label{supermatingale_E}
    \E\left[e^{(\gamma-1)\left(r+\delta+\frac{\vert\kappa\vert^2}{2\gamma}
    \right)\left(T\wedge\tau_N\right)}f^\gamma_\infty
    \frac{\overline\Gamma_\infty^{1-\gamma}(T\wedge\tau_N)}{1-\gamma}\right]
    \leq \frac{f_\infty^\gamma}{1-\gamma}\E\left[\overline\Gamma_\infty^{1-\gamma}
    (0)\right]=\tilde{v}(w,\overbar{\mathbf{x}})\ .
  \end{equation}
  Since $\tau_N\uparrow\tau_+$ as $N\to+\infty$ and the quantity inside the expectation in the left hand side of (\ref{supermatingale_E}) is nonnegative, the first claim follows from Fatou's lemma.\\
To prove the second claim notice first that almost surely $\overline\Gamma_\infty\left(\tau_+\right)=0$ because almost surely $t\mapsto\overline\Gamma_\infty(t)$ is continuous; this implies
\begin{align*} \E\Big[e^{-(\rho+\delta)\left(T\wedge\tau_+\right)}&\tilde{v}
\left(\calx\left(T\wedge\tau_+;\pi\right)\right)\Big]=
e^{-(\rho+\delta)T}\E\left[\ind_{(\tau_+,+\infty)}(T)\tilde{v}
\left(\calx\left(T;\pi\right)\right)\right]\\ &=e^{-\left(\rho+\delta+(\gamma-1)\left(r+
\delta\frac{\vert\kappa\vert^2}{2\gamma}\right)\right)T}
\E\left[\ind_{(\tau_+,+\infty)}(T)e^{(\gamma-1)
\left(r+\delta+\frac{\vert\kappa\vert^2}{2\gamma}\right)}\tilde{v}
\left(\calx(T;\pi)\right)\right]
\\ &\leq e^{-\left(\rho+\delta+(\gamma-1)\left(r+\delta\frac{\vert\kappa\vert^2}
{2\gamma}\right)\right)T}\tilde{v}(w,\overbar{\mathbf{x}})
\end{align*}
and this last quantity converges to $0$ as $T\to+\infty$ thanks
to Assumption \ref{Hyp_gamma}.
\end{proof}

\end{document}